\newtheorem{theorem}{Theorem}
\newtheorem{definition}[theorem]{Definition}
\newtheorem{lemma}[theorem]{Lemma}
\newtheorem{remark}[theorem]{Remark}
\newtheorem{proposition}[theorem]{Proposition}
\newtheorem{corollary}[theorem]{Corollary}
\newtheorem{example}[theorem]{Example}
\newcommand{\IST}{{\rm IST}}
\title{Semi-transitivity of directed split graphs generated by morphisms}
\author{Kittitat Iamthong\footnote{Department of Mathematics and Statistics, University of Strathclyde, 26 Richmond Street Glasgow G1 1XH, United Kingdom} \ and Sergey Kitaev\footnotemark[1] \\ 
{{\footnotesize kittitat.iamthong@strath.ac.uk},\,{\footnotesize sergey.kitaev@strath.ac.uk}}}
\begin{document}  

\maketitle

\abstract{A directed graph is semi-transitive if and only if it is acyclic and for any directed path $u_1\rightarrow u_2\rightarrow \cdots \rightarrow u_t$, $t \geq 2$, either there is no edge from $u_1$ to $u_t$ or all edges $u_i\rightarrow u_j$ exist for $1 \leq i < j \leq t$. 

In this paper, we study semi-transitivity of families of directed split graphs obtained by iterations of morphisms applied to the adjacency matrices and giving in the limit infinite directed split graphs. A split graph is a graph in which the vertices can be partitioned into a clique and an independent set. We fully classify semi-transitive infinite directed split graphs when a morphism in question can involve any $n\times m$ matrices over $\{-1,0,1\}$ with a single natural condition. 
}

\section{Introduction}\label{Sec-intro}

The notion of a semi-transitive orientation of a graph was introduced by Halld\'orsson et al.\ in \cite{HKP11} (also see \cite{HKP16}) as means to completely characterize so-called word-representable graphs \cite{K17,KL15}:\ A graph is word-representable if and only if it admits a semi-transitive orientation. Word-representable graphs, and thus semi-transitive graphs (i.e.\ semi-transitively orientable graphs), generalize several important classes of graphs, e.g.\ circle graphs, 3-colorable graphs and comparability graphs. Semi-transitive orientations are also interesting in their own right as a generalization of transitive orientations. 

{\em Split graphs} \cite{FH77} are graphs in which the vertices can be partitioned into a clique and an independent set.  The study of split graphs attracted much attention in the literature (e.g.\ see \cite{CJKS2020} and references therein). Related to our context, the study of semi-transitive orientability of split graphs was initiated in \cite{CKS19,KLMW17}, where certain subclasses of semi-transitive split graphs were characterized in terms of forbidden subgraphs. Also, split graphs were instrumental in \cite{CKS19} to solve a 10 year old open problem in the theory of word-representable graphs.

In a recent work \cite{I2021}, the first author of this paper extended the studies in \cite{CKS19,KLMW17} by characterizing semi-transitive split graphs in terms of permutations of columns of the adjacency matrices. Moreover, \cite{I2021} studies semi-transitivity of split graphs obtained by iterations of morphisms applied to the adjacency matrices, and thus giving yet another link to combinatorics on words \cite{Loth} (the original link comes from the definition of a word-representable graph). A number of general theorems and a complete classification of semi-transitive orientability in the case of morphisms defined by $2\times2$ matrices are given in \cite{I2021}.

In this paper, we study families of {\em directed} split graphs obtained by iterations of morphisms (involving three matrices $A,B,C$) applied to the adjacency matrices and giving as the limit infinite directed split graphs. For each of such a family we ask the question on whether all graphs in the family are oriented semi-transitively (i.e.\ are semi-transitive) or a finite iteration $k$ of the morphism produces a non-semi-transitive orientation (which will stay non-semi-transitive for all iterations $>k$). In the former case, we say that the infinite split graph's index of semi-transitivity is $\infty$ (denoted $\IST(A,B,C)= \infty$), and in the latter case it is $k$ (assuming $k$ is minimal possible).   

The novelty of our paper is in the study of directed graphs in connection to semi-transitive orientations (as opposed to undirected graphs in the long list of relevant research papers cited in \cite{K17,KL15}), and in that we offer a way to generate interesting (from semi-transitivity point of view) families of directed split graphs using adjacency matrices and iterations of morphisms. Our research will contribute to improving further known algorithms to recognise semi-transitive orientations (on directed split graphs and beyond).  It comes somewhat as a surprise that we were able to completely classify infinite directed split graphs with the index of semi-transitivity $\infty$, where  morphisms in question involve almost arbitrary $n\times m$ matrices over $\{-1,0,1\}$ as opposed to, say, $2\times2$ matrices in \cite{I2021} (in a different context though); the only natural condition, to ensure that our definitions work, is that $A$ has a 0. Our classification is done via several results depending on the structures of matrices $A,B,C$ in question, and it is summarised in the diagram in Figure~\ref{diagram}. Following the  diagram, one can easily determine whether $\IST(A,B,C)= \infty$ for any given $A,B,C$.  

\begin{figure}
	\begin{center}
		\includegraphics[scale=0.6]{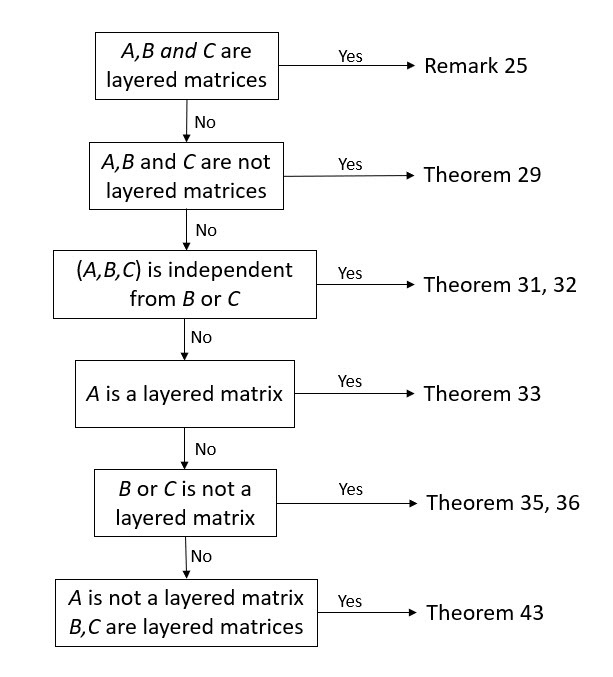}
	\end{center}
	\caption{A guide to the classification results where $A$ is assumed to have a $0$ (a natural condition to ensure that our definitions work). For example, if none of $A,B,C$ is a layered matrix then Theorem~\ref{Thm-ABC-not-layered} is to be applied. 
	}\label{diagram}
\end{figure}

\section{Preliminaries} \label{Sec-preliminaries}

\subsection{Semi-transitive orientations and split graphs}

Graphs in this paper have no loops or multiple edges. Any split graph $S_n$ on $n$ vertices can be partitioned into a {\em maximal} clique $K_m$ and an independent set $E_{n-m}$, and we write $S_n =(E_{n-m},K_m)$.

A directed graph is  oriented {\em semi-transitively} if and only if it is acyclic and for any directed path $u_1\rightarrow u_2\rightarrow \cdots \rightarrow u_t$, $t \geq 2$, either there is no edge from $u_1$ to $u_t$ or all edges $u_i\rightarrow u_j$ exist for $1 \leq i < j \leq t$. Graphs admitting semi-transitive orientations are {\em semi-transitive}.

In this paper, we will need the following results on semi-transitive orientations and split graphs, where a {\em source} (resp., {\em sink}) is a vertex of in-degree (resp., out-degree) 0.

\begin{lemma}[\cite{KLMW17}]\label{lem-tran-orie} Let $K_m$ be a clique in a graph $G$. Then any acyclic orientation of $G$ induces a transitive orientation on $K_m$ (where the presence of edges $u\rightarrow v$ and $v\rightarrow z$ implies the presence of the edge $u\rightarrow z$). In particular, any semi-transitive orientation of $G$ induces a transitive orientation on $K_m$. In either case, the orientation induced on $K_m$ contains a single source and a single sink. \end{lemma}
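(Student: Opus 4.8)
The plan is to exploit the fact that a clique, under any orientation, becomes a tournament, and that an acyclic tournament is the same thing as a linearly ordered set. First I would note that since $K_m$ is a clique, every pair of its vertices is joined by exactly one directed edge, so the orientation of $G$ restricted to $K_m$ is a tournament. Moreover any directed cycle lying inside $K_m$ is in particular a directed cycle of $G$; hence if the orientation of $G$ is acyclic, the induced tournament on $K_m$ is acyclic as well.

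Next I would show that an acyclic tournament on $K_m$ is transitive. Suppose $u\rightarrow v$ and $v\rightarrow z$ are edges with $u,v,z$ distinct. Since $u$ and $z$ are adjacent in the clique, either $u\rightarrow z$ or $z\rightarrow u$; the latter produces the directed $3$-cycle $u\rightarrow v\rightarrow z\rightarrow u$, contradicting acyclicity. Hence $u\rightarrow z$, which is exactly transitivity. Equivalently, an acyclic tournament admits a topological ordering $v_1,v_2,\ldots,v_m$ of its vertices, and in a tournament such an ordering forces $v_i\rightarrow v_j$ for all $i<j$.

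For the claim about a unique source and sink I would use this linear picture: with $v_i\rightarrow v_j$ for all $i<j$, the vertex $v_1$ has in-degree $0$ within $K_m$ and $v_m$ has out-degree $0$ within $K_m$, so a source and a sink of the induced orientation exist. Uniqueness follows because any $v_i$ with $1<i<m$ has $v_1$ as an in-neighbor and $v_m$ as an out-neighbor inside $K_m$, so it is neither a source nor a sink of the orientation induced on $K_m$. Finally, the ``in particular'' clauses are immediate, since a semi-transitive orientation is by definition acyclic, so everything established for acyclic orientations applies.

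I do not anticipate a genuine obstacle here; the only point requiring a little care is the step from ``no directed triangle'' to ``globally transitive/linearly ordered,'' and this is handled cleanly either by the direct $3$-cycle argument above or by extracting a topological order from acyclicity (e.g.\ by repeatedly removing a source).
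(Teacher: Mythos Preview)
Your argument is correct and is the standard proof that an acyclic tournament is transitive (hence linearly ordered, with a unique source and sink). Note, however, that the paper itself does not supply a proof of this lemma: it is quoted as a result from \cite{KLMW17}, so there is no in-paper proof to compare against; your write-up would serve perfectly well as a self-contained justification.
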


\begin{theorem}[\cite{KLMW17}]\label{semi-tran-groups} Any semi-transitive orientation of a split graph $S_n=(E_{n-m},K_m)$ subdivides the set of all vertices in $E_{n-m}$ into three, possibly empty, groups corresponding to each of the following types (also shown schematically in Figure~\ref{3-groups}), where $\vec{P}= p_1\rightarrow\cdots\rightarrow p_m$ is the longest directed path in $K_m$: 
	\begin{itemize}
		\item A vertex in $E_{n-m}$ is of {\em type A} if it is a source and is connected to all vertices in $\{p_i,p_{i+1},\ldots, p_j\}$ for some $1\leq i\leq j\leq m$;
		\item A vertex in $E_{n-m}$ is of {\em type B} if it is a sink and is connected to all vertices in $\{p_i,p_{i+1},\ldots, p_j\}$ for some $1\leq i\leq j\leq m$; 
		\item A vertex $v\in E_{n-m}$ is of {\em type C} if there is an edge $x\rightarrow v$ for each $x\in I_v=\{p_1,p_2,\ldots, p_i\}$ and there is an edge $v\rightarrow y$ for each $y\in O_v=\{p_j,p_{j+1},\ldots, p_m\}$ for some $1\leq i< j\leq m$.
	\end{itemize} 
\end{theorem}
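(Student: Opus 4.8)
The plan is to fix an arbitrary semi-transitive orientation of $S_n=(E_{n-m},K_m)$ and, for each vertex $v$ of the independent set, deduce how $v$ attaches to the clique by feeding a handful of carefully chosen directed paths into the definition of semi-transitivity. First I would apply Lemma~\ref{lem-tran-orie}: the orientation induced on $K_m$ is transitive, so $K_m$ becomes a transitive tournament with a unique source and a unique sink. Labelling accordingly, the clique vertices are $p_1,\dots,p_m$ with the edge $p_a\to p_b$ present exactly when $a<b$, and $\vec P=p_1\to\cdots\to p_m$ is the longest directed path in $K_m$. Since $E_{n-m}$ is independent, every neighbour of $v$ lies on $\vec P$; write $I_v=\{p_a:p_a\to v\}$ for its in-neighbours and $O_v=\{p_b:v\to p_b\}$ for its out-neighbours, so that $v$ is a source iff $I_v=\emptyset$ and a sink iff $O_v=\emptyset$.

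The first step is to show that $I_v$ lies entirely to the left of $O_v$ along $\vec P$: if $p_a\in I_v$ and $p_b\in O_v$ then $a<b$, since $a=b$ is impossible and $a>b$ would produce the directed triangle $v\to p_b\to p_a\to v$, contradicting acyclicity. Hence, whenever both sets are nonempty, $\max\{a:p_a\in I_v\}<\min\{b:p_b\in O_v\}$.

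The second step splits into the three advertised cases. If $I_v=\emptyset$ then $v$ is a source; given $p_a,p_c\in O_v$ with $a<c$, the directed path $v\to p_a\to p_{a+1}\to\cdots\to p_c$ has the shortcut edge $v\to p_c$, so by semi-transitivity every edge among its vertices is present, in particular $v\to p_b$ for all $a<b<c$; thus $O_v$ is a block of consecutive vertices $\{p_i,\dots,p_j\}$ and $v$ is of type A. Symmetrically, $O_v=\emptyset$ makes $v$ a sink of type B. If both $I_v,O_v\ne\emptyset$, set $i=\max\{a:p_a\in I_v\}$ and $j=\min\{b:p_b\in O_v\}$, so $i<j$ by the first step. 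For $b<i$, the directed path $p_b\to p_{b+1}\to\cdots\to p_i\to v\to p_j$ has the shortcut $p_b\to p_j$ (present since $b<j$), so semi-transitivity forces the edge $p_b\to v$, giving $I_v=\{p_1,\dots,p_i\}$. For $c>j$, the directed path $p_i\to v\to p_j\to p_{j+1}\to\cdots\to p_c$ has the shortcut $p_i\to p_c$ (present since $i<c$), so semi-transitivity forces $v\to p_c$, giving $O_v=\{p_j,\dots,p_m\}$. Hence $v$ is of type C with $1\le i<j\le m$.

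I expect the only real difficulty to be bookkeeping rather than any genuine obstacle: one must verify that each auxiliary sequence is indeed a directed path (which is precisely where the presence of every forward clique edge $p_a\to p_b$, $a<b$, is used), that the particular shortcut edge needed really is one of those forward edges, and then read off from the definition of semi-transitivity exactly which edge incident to $v$ is forced. One harmless edge case is an isolated $v$ with no clique neighbour at all: it is simultaneously a source and a sink and can be placed in group A (or B) with empty attachment interval, or excluded at the outset if one insists on a nonempty $\{p_i,\dots,p_j\}$.
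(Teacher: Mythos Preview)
The paper does not supply its own proof of this statement: it is quoted from \cite{KLMW17} and used as a black box, so there is nothing in the present paper to compare your argument against. Your proof is correct and is the natural one: invoke Lemma~\ref{lem-tran-orie} to linearise the clique as $p_1\to\cdots\to p_m$, use acyclicity to place $I_v$ entirely to the left of $O_v$, and then in each of the three cases feed a directed path with an existing shortcut edge into the definition of semi-transitivity to force the required consecutive-block structure. Your handling of the isolated-vertex edge case is also appropriate.
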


\begin{figure}
	\begin{center}
		
		\begin{tabular}{ccc}
			
			\begin{tikzpicture}[->,>=stealth', shorten >=1pt,node distance=0.5cm,auto,main node/.style={fill,circle,draw,inner sep=0pt,minimum size=5pt}]
			
			\node[main node] (1) {};
			\node[main node] (2) [right of=1,xshift=5mm] {};
			\node[main node] (3) [above of=2] {};
			\node (4) [above of=3] {};
			\node[main node] (5) [below of=2] {};
			\node (6) [below of=5] {};
			
			\node (7) [above of=4,yshift=-4mm,xshift=1mm] {};
			\node (8) [above right of=7,xshift=6mm]{};
			
			\node (10) [below of=6,yshift=4mm,xshift=1mm] {};
			\node (11) [below right of=10,xshift=6mm]{};
			
			\node (11) [below left of=5,yshift=-5mm]{type A};
			
			\path
			(4) edge (3)
			(3) edge (2)
			(2) edge (5)
			(5) edge (6);
			
			\path
			(1) edge (2)
			(1) edge (3)
			(1) edge (5);
			
			\end{tikzpicture}
			
			&
			\begin{tikzpicture}[->,>=stealth', shorten >=1pt,node distance=0.5cm,auto,main node/.style={fill,circle,draw,inner sep=0pt,minimum size=5pt}]
			
			\node[main node] (1) {};
			\node[main node] (2) [right of=1,xshift=5mm] {};
			\node[main node] (3) [above of=2] {};
			\node (4) [above of=3] {};
			\node[main node] (5) [below of=2] {};
			\node (6) [below of=5] {};
			
			\node (7) [above of=4,yshift=-4mm,xshift=1mm] {};
			\node (8) [above right of=7,xshift=6mm]{};
			
			\node (10) [below of=6,yshift=4mm,xshift=1mm] {};
			\node (11) [below right of=10,xshift=6mm]{};
			
			\node (11) [below left of=5,yshift=-5mm]{type B};
			
			\path
			(4) edge (3)
			(3) edge (2)
			(2) edge (5)
			(5) edge (6);
			
			\path
			(2) edge (1)
			(3) edge (1)
			(5) edge (1);
			
			\end{tikzpicture}
			
			&
			
			\begin{tikzpicture}[->,>=stealth', shorten >=1pt,node distance=0.5cm,auto,main node/.style={fill,circle,draw,inner sep=0pt,minimum size=5pt}]
			
			\node[main node] (1) {};
			\node[main node] (2) [below right of=1,xshift=5mm] {};
			\node[main node] (3) [above of=2] {};
			\node[main node] (4) [above of=3] {};
			\node[main node] (5) [below of=2] {};
			\node[main node] (6) [below of=5] {};
			\node[main node] (13) [above of=4] {};
			\node[main node] (14) [below of=5] {};
			
			\node (7) [above of=13,yshift=-5mm,xshift=1mm] {};
			\node (8) [right of=7,xshift=6mm]{};
			\node (9) [right of=8]{source};
			
			\node (10) [below of=14,yshift=5mm,xshift=1mm] {};
			\node (11) [right of=10,xshift=6mm]{};
			\node (12) [right of=11,xshift=-1mm]{sink};
			
			\node [below of=14]{type C};
			
			\path
			(8) edge (7)
			(11) edge (10);
			
			\path
			(13) edge (1)
			(4) edge (1)
			(1) edge (14)
			(1) edge (5)
			(4) edge (3)
			(3) edge (2)
			(2) edge (5)
			(5) edge (6)
			(13) edge (4)
			(5) edge (14);
			
			\end{tikzpicture}
			
		\end{tabular}
		
		\caption{\label{3-groups} Three types of vertices in $E_{n-m}$ in a semi-transitive orientation of $(E_{n-m},K_m)$. The vertical oriented paths are a schematic way to show (parts of) $\vec{P}$}
	\end{center}
\end{figure}
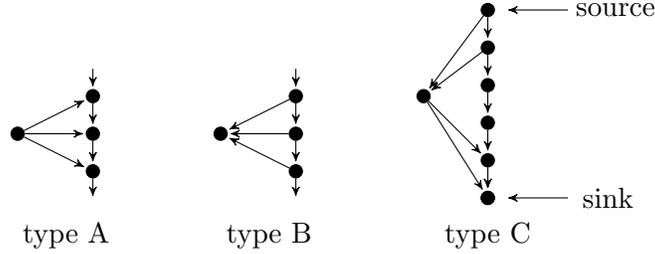

\begin{theorem}[\cite{KLMW17}]\label{relative-order} Let $S_n=(E_{n-m},K_m)$ be oriented semi-transitively with $\vec{P}= p_1\rightarrow\cdots\rightarrow p_m$. For a vertex $x\in E_{n-m}$ of type C,
	there is no vertex $y\in E_{n-m}$ of type A or B, which is connected to both $p_{|I_x|}$ and $p_{m-|O_x|+1}$. Also, there is no vertex $y\in E_{n-m}$ of type C such that either $I_y$, or $O_y$ contains both $p_{|I_x|}$ and $p_{m-|O_x|+1}$.
\end{theorem}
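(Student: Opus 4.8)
The plan is to prove Theorem~\ref{relative-order} by contradiction, in each case exhibiting a directed path whose endpoints are joined by an edge but for which some intermediate edge is missing, thereby violating semi-transitivity. Throughout, I will use Lemma~\ref{lem-tran-orie} to know that the clique $K_m$ is transitively oriented with $\vec{P}= p_1\rightarrow\cdots\rightarrow p_m$ realizing the full transitive order, so that $p_a\rightarrow p_b$ is an edge precisely when $a<b$. Write $i=|I_x|$ and $j=m-|O_x|+1$, so that by the definition of type C we have $1\le i<j\le m$, the edges $p_a\rightarrow x$ exist for all $a\le i$, the edges $x\rightarrow p_b$ exist for all $b\ge j$, and (crucially) there is \emph{no} edge between $x$ and any $p_c$ with $i<c<j$; in particular $p_i\rightarrow x\rightarrow p_j$ is a directed path and $p_i\rightarrow p_j$ is an edge of $K_m$.

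First I would handle the case of a type A or type B vertex $y$ connected to both $p_i$ and $p_j$. Suppose $y$ is of type A (a source); then $y\rightarrow p_i$ and $y\rightarrow p_j$ are edges, and since $y$ is a source and is connected to a consecutive block $\{p_s,\dots,p_t\}$ containing $i$ and $j$, it is in fact connected to $p_c$ for every $i\le c\le j$. Now consider the directed path $y\rightarrow p_i\rightarrow x\rightarrow p_j$: its endpoints are joined by the edge $y\rightarrow p_j$, so semi-transitivity forces the edge $y\rightarrow x$ to be present --- but $x$ is of type C and a source $y$ can only have edges directed \emph{into} it from clique vertices via... more precisely, one then produces a contradiction with the shortcut structure: the path $y\to p_i \to x$ has the edge $y \to p_i$... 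I would instead route through $y \rightarrow p_i \rightarrow x \rightarrow p_j$ and note the required edge $y\rightarrow x$ would then, combined with $x\rightarrow p_j$ and the path $x \rightarrow p_{j-1}$... The cleaner contradiction: since $y\rightarrow x$ is forced, and $x\rightarrow p_j$, the path $y \rightarrow x \rightarrow p_j$ has endpoints joined, fine; but now use that $y$ is connected to $p_{j-1}$ as well (consecutiveness), giving path $y \rightarrow p_{j-1}$, while $p_{j-1}\to p_j$... The actual obstruction I will exploit is the missing edge between $x$ and $p_c$ for $i<c<j$: the path $p_i\rightarrow x\rightarrow p_j$ together with edge $p_i\rightarrow p_j$ is a legitimate shortcut, so semi-transitivity demands $p_i\rightarrow p_c$ and $x \rightarrow p_c$ and $p_c \rightarrow p_j$ for intermediate vertices --- but that is about the internal path of $K_m$; I must instead lengthen using $y$. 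The correct move: consider $y\rightarrow p_i\rightarrow x\rightarrow p_j$ when $y$ is type A to force $y\to x$; then the edge $y\to p_{j-1}$ (by consecutiveness, as $i \le j-1 \le j$) plus $p_{j-1}$ being reachable gives the path $y \to p_{j-1} \to p_j$, consistent; the genuine contradiction comes from combining $y\to x$ with $y \to p_{j-1}$: since both edges leave the source $y$, look at $y \to p_i \to x \to p_j$ versus the chord, forcing $x \to p_{j-1}$ too, contradicting that $x$ has no edge to $p_c$ for $i<c<j$ (and $i<j-1<j$ when $j\ge i+2$; the case $j=i+1$ has no such $c$ and must be excluded or handled by the hypothesis that $y$ is connected to \emph{both} $p_i$ and $p_j$ forcing $y$'s block to have size $\ge 2$, still fine). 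The type B case is dual, reversing all arrows. The type C case for $y$ is similar: if $I_y$ contains both $p_i$ and $p_j$ then $p_i\rightarrow y$ and $p_j \rightarrow y$... I would set up $p_i \rightarrow x \rightarrow p_j \rightarrow y$ (using $p_j \to y$), whose endpoints $p_i, y$ are joined since $p_i \in I_y$, forcing $x \to y$, and then derive a contradiction with the interval structure of $I_y$ and $O_y$ and the gap of $x$; the case $O_y$ containing both is dual.

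The main steps, then, are: (1) fix notation $i=|I_x|$, $j=m-|O_x|+1$, record the path $p_i\rightarrow x\rightarrow p_j$ and the chord $p_i\rightarrow p_j$, and note $x$ has no edge to $p_c$ for $i<c<j$; (2) for a type A vertex $y$ adjacent to $p_i$ and $p_j$, use consecutiveness of $y$'s neighborhood block and the path through $x$ to force an edge $x\rightarrow p_c$ with $i<c<j$, a contradiction; (3) dualize for type B; (4) for a type C vertex $y$ with $\{p_i,p_j\}\subseteq I_y$ (resp.\ $\subseteq O_y$), again build a path through $x$ forcing a forbidden edge of $x$ or violating the interval structure of $I_y$/$O_y$. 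I expect the main obstacle to be the careful bookkeeping in step (2): one must be sure the intermediate index $c$ one produces genuinely lies strictly between $i$ and $j$ (so that the "$x$ has no edge to $p_c$" fact applies), and handle the boundary case $j=i+1$ separately --- there, one uses instead that $y$ being connected to two \emph{consecutive} clique vertices $p_i,p_j$ which are exactly the boundary of $x$'s gap lets one relocate the contradiction, e.g.\ by observing the path $p_{i}\to x \to p_j$ and $y\to p_i$, $y\to p_j$ cannot coexist with a semi-transitive orientation because the induced suborientation on $\{p_i,p_j,x,y\}$ would be forced to be transitive yet $x$ and $y$ are non-adjacent. Verifying that last small-configuration claim, and its type-C analogue, is the most delicate part of the argument, but it is a finite case check.
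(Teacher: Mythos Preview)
The paper does not give its own proof of this statement: Theorem~\ref{relative-order} is quoted from \cite{KLMW17} and used as a black box, so there is nothing in the paper to compare your argument against.

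That said, your approach is on the right track but badly overcomplicated, and the muddle hides the one-line contradiction. Once you set $i=|I_x|$, $j=m-|O_x|+1$ and observe the directed path $p_i\rightarrow x\rightarrow p_j$, the cases finish immediately because the forced edge always lands \emph{inside the independent set}. For $y$ of type~A you have $y\rightarrow p_i$ and $y\rightarrow p_j$; the path $y\rightarrow p_i\rightarrow x\rightarrow p_j$ together with the chord $y\rightarrow p_j$ forces $y\rightarrow x$, which is impossible since $x,y\in E_{n-m}$. For $y$ of type~B you have $p_i\rightarrow y$ and $p_j\rightarrow y$; the path $p_i\rightarrow x\rightarrow p_j\rightarrow y$ with chord $p_i\rightarrow y$ forces $x\rightarrow y$, again impossible. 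For $y$ of type~C with $\{p_i,p_j\}\subseteq I_y$ the argument is identical to type~B, and with $\{p_i,p_j\}\subseteq O_y$ identical to type~A.

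Your detours through ``forcing $x\rightarrow p_{j-1}$'', the consecutiveness of $y$'s block, and the special handling of $j=i+1$ are all unnecessary: the contradiction does not use any gap vertex $p_c$ with $i<c<j$, so it works uniformly even when $j=i+1$. You actually wrote the key sentence (``semi-transitivity forces the edge $y\rightarrow x$ to be present'') and then abandoned it; that forced edge is already the contradiction, because $E_{n-m}$ is independent.
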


\begin{theorem}[\cite{KLMW17}]\label{main-orientation} An orientation of a split graph $S_n=(E_{n-m},K_m)$ is semi-transitive if and only if 
	\begin{itemize} 
		\item[{\em (i)}] $K_m$ is oriented transitively;
		\item[{\em (ii)}] each vertex in $E_{n-m}$ is of one of the three types in Theorem~\ref{semi-tran-groups};
		\item[{\em (iii)}] the restrictions in Theorem~\ref{relative-order} are satisfied. 
\end{itemize}\end{theorem}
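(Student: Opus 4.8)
\emph{Proof proposal.}
The forward implication requires no new argument: if the orientation of $S_n$ is semi-transitive, then $(i)$ is exactly Lemma~\ref{lem-tran-orie}, $(ii)$ is Theorem~\ref{semi-tran-groups}, and $(iii)$ is Theorem~\ref{relative-order}. So the content of the statement is the converse, which I would establish in two stages: first that $S_n$ is acyclic, and then that it contains no shortcut.

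For acyclicity I would exhibit an explicit linear extension. By $(i)$ the clique is a transitive tournament whose Hamiltonian path $\vec{P}=p_1\to\cdots\to p_m$ linearly orders $K_m$ with every clique arc pointing forward. Using $(ii)$, insert each vertex of $E_{n-m}$ into this chain according to its type: a type-A source attached to $\{p_i,\ldots,p_j\}$ immediately before $p_i$; a type-B sink attached to $\{p_i,\ldots,p_j\}$ immediately after $p_j$; a type-C vertex $v$ with $I_v=\{p_1,\ldots,p_i\}$ and $O_v=\{p_j,\ldots,p_m\}$ anywhere strictly between $p_i$ and $p_j$ (possible since $i<j$); break the remaining ties arbitrarily. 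Since $E_{n-m}$ is independent, every arc of $S_n$ lies inside $K_m$ or joins $K_m$ to $E_{n-m}$, and inspecting the three types shows that every such arc points forward in this order. Hence $S_n$ is acyclic.

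For shortcut-freeness I would argue by contradiction. Since $S_n$ is acyclic, if it is not semi-transitive it contains an induced \emph{shortcut} (see~\cite{HKP11}): vertices $u_1,\ldots,u_t$ with $t\ge 4$, carrying the path arcs $u_1\to u_2\to\cdots\to u_t$, the arc $u_1\to u_t$, and possibly further chords, but with at least one arc $u_a\to u_b$ ($1\le a<b\le t$) absent. Among all such shortcuts take one with $t$ minimal, breaking ties by the number of chords. First, acyclicity forces every arc among $u_1,\ldots,u_t$ to run from a smaller to a larger index, so by $(i)$ any two of the $u_i$ lying in $K_m$ are joined by (the forward) arc; in particular not all $u_i$ lie in $K_m$, for otherwise the induced digraph would be a transitive tournament with no missing arc. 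Second, minimality forces every chord $u_i\to u_j$ to fill its interval, i.e.\ all arcs inside $\{u_i,\ldots,u_j\}$ are present. Third, by $(ii)$ an internal vertex $u_k$ ($1<k<t$) in $E_{n-m}$ is of type C, with $u_{k-1},u_{k+1}\in K_m$, $u_{k-1}\in I_{u_k}$ and $u_{k+1}\in O_{u_k}$; moreover $u_1$, if in $E_{n-m}$, is of type A or C, and $u_t$, if in $E_{n-m}$, is of type B or C, and they are not both in $E_{n-m}$ (they are adjacent). A short case check using the interval descriptions of the types then rules out the possibility that only one vertex of the shortcut lies in $E_{n-m}$, so the shortcut contains an internal type-C vertex~$x$.

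From here, the plan is to show that the minimal shortcut collapses to one of a short list of small configurations --- the basic one being a path $y\to p\to x\to q$ with closing arc $y\to q$, where $p,q\in K_m$, $x$ is of type C and $y$ is of type A (or dually of type B), and the missing arc is the non-edge between the two vertices of $E_{n-m}$ --- and that in each of them the interval description of the types forces the two vertices outside $K_m$ to be attached simultaneously to $p_{|I_x|}$ and to $p_{m-|O_x|+1}$, which is exactly the pattern forbidden by Theorem~\ref{relative-order}, i.e.\ by condition~$(iii)$; this contradiction finishes the proof. The main obstacle is precisely this last step: keeping the bookkeeping of the segments $I_v,O_v$ and of the intervals attached to type-A and type-B vertices under control as one walks along the shortcut, bounding its length, and matching every surviving case to a forbidden pattern of Theorem~\ref{relative-order}. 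The acyclicity argument and the structural observations above are routine; it is only here that the full strength of $(ii)$ and $(iii)$ is used.
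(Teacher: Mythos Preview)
The paper does not prove Theorem~\ref{main-orientation}; it is quoted verbatim from~\cite{KLMW17} as background, with no argument given. So there is no ``paper's own proof'' to compare your proposal against.

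As for the proposal itself: the forward direction and the acyclicity argument are clean and correct. Your reduction showing that a shortcut cannot have exactly one vertex in $E_{n-m}$ is also right (the interval shape of each type forces all missing arcs to be present). Where the proposal stops being a proof is exactly where you flag it: the final collapse of a minimal shortcut with at least two independent-set vertices to a configuration forbidden by Theorem~\ref{relative-order}. You sketch one target configuration (a $4$-cycle $y\to p\to x\to q$ with $y\to q$ and the $yx$ non-edge), but you have not shown that every minimal shortcut reduces to this or a symmetric pattern, nor that the attachments of $y$ must hit both $p_{|I_x|}$ and $p_{m-|O_x|+1}$ rather than some other pair. That bookkeeping is the entire content of the converse, and until it is carried out the argument is a plausible plan rather than a proof. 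If you want to complete it, the cleanest route is to fix the internal type-C vertex $x$ closest to the missing arc, show that minimality forces the shortcut to have length exactly~$4$ with $x$ internal, and then read off from the type of the other $E_{n-m}$ vertex that its interval (or its $I$/$O$ sets) must straddle the gap of $x$.
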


%\begin{definition}
%	An oriented graph is {\bf semi-transitive} if its orientation is semi-transitive.
%\end{definition}

\subsection{Directed split graphs}

A directed graph is {\em semi-transitive} if its orientation is semi-transitive. The {\em adjacency matrix} $A=[a_{ij}]$ of a directed graph on $n$ vertices is a binary matrix such that $a_{ij}=1$ if $j\rightarrow i$ is an edge, and $a_{ij}=0$ otherwise.  Let $L(A)=[\ell_{ij}]$ be the $n \times n$ lower triangular matrix such that, for any $i > j$, $$\ell_{ij}= \begin{cases}
	1 &\text{~~if~~}a_{ij}=1,\\
	-1 &\text{~~if~~}a_{ji}=1,\\
	0 &\text{~~otherwise~~}
	\end{cases}$$ and $\ell_{ij}=0$ for any $i \leq j$.

Clearly, there is a one-to-one correspondence between directed graphs of order $n$ and $n \times n$ lower triangular matrices over $\{-1,0,1\}$ with the diagonal elements equal 0. Thus, $L(A)$ can play the role of the adjacency matrix of a directed graph. For $i>j$, the connectivity between vertices $i$ and $j$ is $j \rightarrow i$ if $\ell_{ij}=1$, and is $i \rightarrow j$ if $\ell_{ij}=-1$, and there is no edge if $\ell_{ij}=0$.

\begin{example}\label{ExL(A)} {\em 
		If $A=\left[ {\begin{array}{cccccc}
		0&0&1&0&1&0\\
		1&0&0&0&0&0\\
		0&0&0&1&0&0\\
		0&1&0&0&0&1\\
		0&0&0&0&0&0\\
		1&0&0&0&1&0			\end{array} } \right]$ is an adjacency matrix of a directed graph $G$, then $L(A)=\left[ {\begin{array}{cccccc}
		0&0&0&0&0&0\\
		1&0&0&0&0&0\\
		-1&0&0&0&0&0\\
		0&1&-1&0&0&0\\
		-1&0&0&0&0&0\\
		1&0&0&-1&1&0			\end{array} } \right]$ and the set of edges of $G$ (on 6 vertices) is \{$1\rightarrow 2$, $2\rightarrow 4$, $1\rightarrow 6$, $5\rightarrow 6$, $3\rightarrow 1$, $5\rightarrow 1$, $4\rightarrow 3$, $6\rightarrow 4$\}.
	}
\end{example}

Our interest is in acyclically (without directed cycles) oriented split graphs since only such graphs have a chance to be semi-transitive.  For any acyclically oriented split graph $G$, by Lemma~\ref{lem-tran-orie}, we know that the induced orientation of the maximal clique in $G$ is transitive, so the following notion can be introduced.
\begin{definition}
	An acyclically oriented split graph $G$ with a maximal clique of order $n$ is {\em well-labelled} if the vertex set of $G$ is $V(G)=\{1,2, \ldots, |V(G)|\}$ and the longest directed path in the maximal clique is $1 \rightarrow 2 \rightarrow \cdots \rightarrow n$.
\end{definition} 

Since we can relabel graphs, throughout the paper, W.L.O.G.\ we can assume that any given acyclically oriented split graph is well-labelled. If $A$ is the adjacency matrix for $S=(E_m,K_n)$ (where $K_n$ is maximal) of order $m+n$, then $$L(A) = \begin{bmatrix}
L_n & O_{n,m} \\ M & O_m
\end{bmatrix}$$
for some $m \times n$ matrix $M$, where $O_{n,m}$ and $O_m$ are $n \times m$ and $m \times m$ zero matrices, respectively, and $L_n$ is the $n \times n$ matrix such that all entries strictly below the main diagonal are $1$'s, and all other entries are $0$'s. Hence, every directed split graph with maximal clique of order $n$ and independent set of order $m$ can be represented by an $m \times n$ matrix $M$ appearing in $L(A)$ and recording directed edges between $K_n$ and $E_m$. Thus,   generating a matrix $M$ with entries in $\{-1,0,1\}$, we generate an acyclically oriented split graph.

\begin{definition}\label{defSM}
	Let $M=[m_{ij}]$ be an $m \times n$ matrix such that $m_{ij}\in \{ -1,0,1 \} $ for $1 \leq i \leq m$ and $1 \leq j \leq n$. Define 
	$$S_o(M)=\begin{bmatrix}
	L_n & O_{n,m} \\ M & O_m
	\end{bmatrix}$$ 
where the subscript $o$ stands for ``oriented" and $S$ stands for ``split". We denote the directed split graph corresponding to $S_o(M)$ by $G_o(M)$. 
\end{definition}

\begin{example} \label{ExM} {\em 
		If $M=\left[ {\begin{array}{cccc}
			0&1&0&1\\
			-1&0&-1&-1\\
			0&0&0&1
			\end{array} } \right]$ then $$S_o(M)=\left[ {\begin{array}{cccccccc}
			0&0&0&0&0&0&0\\
			1&0&0&0&0&0&0\\
			1&1&0&0&0&0&0\\
			1&1&1&0&0&0&0\\
			0&1&0&1&0&0&0\\
			-1&0&-1&-1&0&0&0\\
			0&0&0&1&0&0&0
			\end{array} } \right]$$
		is the adjacency matrix of the directed graph $G_0(M)$ shown in Figure~\ref{ExampleSM}.
	}
\end{example}

\begin{figure} 
	\begin{center}
		
		\begin{tikzpicture}[->,>=stealth', shorten >=1pt,node distance=0.5cm,auto,main node/.style={fill,circle,draw,inner sep=0pt,minimum size=5pt}]

		\node[main node] (1) {};
		\node [left of=1, xshift=2.8mm, yshift=0mm] {\tiny{4}};
	\node[main node] (2) 
			[above right of=1, xshift=1.6mm, yshift=2mm] {};
		\node [above of=2, xshift=-0.1mm, yshift=-2.5mm] {\tiny{3}};
			\node[main node] (3) 
			[right of=2, xshift=2mm] {};
		\node [above of=3, xshift=-0.1mm, yshift=-2.5mm] {\tiny{2}};
	\node[main node] (4) 
			[below right of=3, xshift=1.6mm, yshift=-2mm] {};
		\node [right of=4, xshift=-2.8mm, yshift=0mm] {\tiny{1}};
		\node[main node] (5) 
			[below of=1, xshift=2.5mm, yshift=-4mm] {};
		\node [below of=5, xshift=-0.1mm, yshift=2.5mm] {\tiny{5}};
		\node[main node] (6) 
			[right of=5, xshift=1mm] {};
		\node [below of=6, xshift=-0.1mm, yshift=2.5mm] {\tiny{6}};
			\node[main node] (7) 
			[right of=6, xshift=1mm] {};
	\node [below of=7, xshift=-0.1mm, yshift=2.5mm] {\tiny{7}};
	
		\path
		(4) edge (3)
		(4) edge (2)
		(4) edge (1)
		(3) edge (2)
		(3) edge (1)		
		(2) edge (1);
		
		\path
		(5) edge (2)
		(5) edge (4)
		(7) edge (4)
		(1) edge (6)
		(3) edge (6)
		(4) edge (6);
		
		\end{tikzpicture}
		
		\vspace{-4mm}
		\caption{\label{ExampleSM}  The directed split graph $G_o(M)$ given by $S_o(M)$ in Example \ref{ExM}}
	\end{center}
\end{figure}
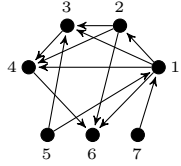

For convenience, we will represent rows of an $m \times n$ matrix $M$ by strings of length $n$. For example, we will represent the three rows of ${\tiny \begin{bmatrix} 1&-1&0&1\\0&1&-1&0\\0&0&0&1 \end{bmatrix}}$ by $1(-1)01$, $01(-1)0$ and $0001$. 

Note that in Definition \ref{defSM}, the maximal clique of $G_o(M)$ is of order $n+1$ if there is a row of the form $11 \cdots 1$ or $(-1)(-1)\cdots (-1)$ in $M$, and the maximal clique is of order $n$  otherwise. In the former case, $G_o(M)$ may not be well-labelled. In the case of $n=1$, the graph $G_o(M)$ is a tree which is always semi-transitive. Thus, throughout this paper, we can assume that $n \geq 2$.

\begin{remark}\label{M=0}
	If $M$ is a zero matrix, then $G_o(M)$ is  semi-transitive as it is a disjoint union of a transitively oriented clique and isolated vertices. 
\end{remark}

In what follows, $x^r$ denotes $xx\cdots x$, where $x\in\{-1,0,1\}$ is repeated $r$ times. 

\begin{lemma} \label{Lem-no0row}
	Let $M := [m_{ij}]_{m \times n}$ be an $m \times n$ matrix over $\{-1,0,1\}$ such that $m_{p1}=m_{p2}=\cdots=m_{pr}=1$ and $m_{p(r+1)}=m_{p(r+2)}=\cdots=m_{pn}=-1$ for some $p \in \{1,2,\ldots,m\}$ and $r \in \{0,1,\ldots,n\}$. If 
	$$N= \begin{bmatrix}
	m_{11}	  &m_{12}	 &\cdots&m_{1r}	   &0	  &m_{1(r+1)}&\cdots&m_{1n}\\	
	m_{21}	  &m_{22}	 &\cdots&m_{2r}	   &0	  &m_{2(r+1)}&\cdots&m_{2n}\\
	\vdots 	  &\vdots	 &      &\vdots    &\vdots&\vdots 	 &      &\vdots \\
	m_{(p-1)1}&m_{(p-1)2}&\cdots&m_{(p-1)r}&0	  &m_{(p-1)(r+1)}&\cdots&m_{(p-1)n}\\
	m_{(p+1)1}&m_{(p+1)2}&\cdots&m_{(p+1)r}&0	  &m_{(p+1)(r+1)}&\cdots&m_{(p+1)n}\\
	\vdots 	  &\vdots	 &      &\vdots    &\vdots&\vdots    &      & \vdots \\
	m_{m1}	  &m_{m2}	 &\cdots&m_{mr}	   &0	  &m_{m(r+1)}&\cdots&m_{mn}
	
\end{bmatrix}$$
is an $(m-1) \times (n+1)$ matrix, then $G_o(M)$ is isomorphic to $G_o(N)$.
\end{lemma}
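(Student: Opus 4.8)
The plan is to exhibit an explicit isomorphism $\varphi\colon V(G_o(M))\to V(G_o(N))$ and check that it preserves edges and their orientations. The key observation is that row $p$ of $M$ is of the very special form $1^r(-1)^{n-r}$: the corresponding vertex in $G_o(M)$ — call it $w$ — is connected to exactly the first $r$ clique vertices by edges pointing into $w$ and to the last $n-r$ clique vertices by edges pointing out of $w$. In $L(A)$-terms, inserting such a vertex into the transitively oriented clique $1\to 2\to\cdots\to n$ between positions $r$ and $r+1$ produces again a transitively oriented clique, now of order $n+1$: every edge forced by transitivity is already present because all ``left'' neighbours point in and all ``right'' neighbours point out. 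So $w$ can be absorbed into the clique, which is exactly what passing from the $m\times n$ matrix $M$ to the $(m-1)\times(n+1)$ matrix $N$ does — $N$ has no row $p$ (that vertex is now a clique vertex), has one extra column (the new clique vertex), and the extra column of $N$ is all $0$'s because no independent-set vertex of $M$ other than $w$ was adjacent to $w$ (there are no edges within the independent set).

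Concretely, I would define $\varphi$ as follows. Let the clique vertices of $G_o(M)$ be $1,\dots,n$ and the independent vertices be $n+1,\dots,n+m$, with vertex $n+p$ the special one $w$. Send clique vertices $1,\dots,r$ to $1,\dots,r$; send $w=n+p$ to $r+1$; send clique vertices $r+1,\dots,n$ to $r+2,\dots,n+1$; and send the remaining independent vertices $n+1,\dots,n+m$ (in order, skipping $n+p$) to $n+2,\dots,n+m$. This is the unique order-preserving bijection compatible with the block structure described above, and it matches the row/column indexing of $N$ as written in the statement (row $q<p$ of $M$ becomes row $q$ of $N$, row $q>p$ becomes row $q-1$, the inserted zero column sits in position $r+1$).

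Then I would verify edge preservation by cases. (1) Edges inside the clique of $G_o(M)$: these all go from lower to higher index and remain so under $\varphi$, and the new vertex $r+1$ of $G_o(N)$ receives edges from $1,\dots,r$ and sends edges to $r+2,\dots,n+1$ — which is exactly the adjacency pattern of $w$, by the hypothesis on row $p$. So the clique of $G_o(N)$ is transitively oriented and $\varphi$ is a bijection between the two cliques respecting orientation. (2) Edges between an independent vertex $n+q$ ($q\neq p$) and a clique vertex $j$ of $G_o(M)$: the sign $m_{qj}$ is copied verbatim into the appropriate entry of $N$ (with the column index shifted by one when $j>r$, and with a $0$ inserted in the new column), so $\varphi$ preserves these edges and their orientations. (3) There are no edges among independent vertices in either graph, and the new column of $N$ being zero means the former independent vertex $w$, now a clique vertex $r+1$, gains no spurious edges to $n+2,\dots,n+m$ — consistent with $w$ having had no neighbours in $E_m\setminus\{w\}$ in $G_o(M)$. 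Hence $\varphi$ is a graph isomorphism.

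The only mildly delicate point — the ``main obstacle'' such as it is — is bookkeeping: one must check that after inserting $w$ as the $(r{+}1)$-st clique vertex, the clique really stays \emph{transitively} oriented (not merely acyclically oriented), i.e.\ that no required edge $a\to b$ with $a<b$ among $\{1,\dots,n+1\}$ is missing or reversed. This reduces to the three sub-checks $a,b\le r$; $a\le r<b$ with one of them $=r{+}1$; $a,b\ge r{+}1$ — all immediate from $m_{p1}=\cdots=m_{pr}=1$, $m_{p(r+1)}=\cdots=m_{pn}=-1$, and the fact that $L_n$ already makes $1\to\cdots\to n$ transitive. Once this is in hand, that $S_o(N)$ (after the relabelling induced by $\varphi$) is the adjacency matrix of the same graph is a direct comparison of entries with $S_o(M)$, and the lemma follows. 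Note this also explains the running convention that such rows may be assumed absent from $M$: Lemma~\ref{Lem-no0row} lets us delete them one at a time.
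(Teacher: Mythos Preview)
Your proposal is correct and follows essentially the same approach as the paper: both recognise that the vertex $n+p$ corresponding to the row $1^r(-1)^{n-r}$ can be absorbed into the clique at position $r+1$, and both carry this out by the same relabelling (the paper phrases it as ``relabel $n+p$ to $r+1$ and shift each $\ell\in\{r+1,\dots,n+p-1\}$ to $\ell+1$'', which is exactly your $\varphi$). Your write-up is more explicit in verifying edge preservation case by case and in checking that the enlarged clique is transitively oriented, whereas the paper leaves these checks implicit; otherwise the arguments coincide.
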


\begin{proof}
The $p$-th row in $M$, which is $1^r(-1)^{n-r}$, represents the vertex $n+p$ in the independent set connected to all vertices in $K_n = \{1,2, \ldots , n\}$. So $K_n$ is not the maximal clique in $G_o(M)$, but $K_n \cup \{n+p\}$ is the maximal clique. Note that  $\ell \rightarrow n+p$ for every vertex $\ell \in \{1,2,\ldots ,r\}$ and $n+p \rightarrow \ell$ for all  vertex $\ell \in \{r+1,r+2, \ldots, n\}$. We relabel the vertex $n+p$ to be $r+1$ and relabel a vertex $\ell$ to be $\ell+1$ for each $\ell \in \{r+1,r+2,\dots,n+p-1 \}$. The relabelling gives the graph that can be represented by the matrix $S_o(N)$. Hence, $G_o(M)$ is isomorphic to $G_o(N)$.
\end{proof} 

\begin{remark}  \label{remark-no0not1(-1)}
	Let $M$ be an $m \times n$ matrix over $\{-1,0,1\}$. If $a_1a_2\cdots a_n$ is the $p$-th row in $M$ such that $a_q=-1$ and $a_r=1$ for some $1 \leq q < r \leq n$, then $q \rightarrow r \rightarrow n+p \rightarrow q$ forms a cycle in $G_o(M)$. Hence, $G_o(M)$ is not semi-transitive if there is a $1$ occurring to the right of a $-1$ in a row in $M$. Consequently, if there is a row in $M$ such that it has no $0$ and it is not of the form $11\cdots1 (-1)(-1)\cdots(-1)$, then $G_o(M)$ is not semi-transitive.
\end{remark}

Let $M$ be an $m \times n$ matrix over $\{-1,0,1\}$. We can see that the maximal clique of $G_o(M)$ is of order $n$ or $n+1$. Moreover, the maximal clique of $G_o(M)$ is the clique of order $n+1$ if there is a row in $M$ containing no $0$. In this case, the matrix $M$ does not represent only edges between vertices in the maximal clique and vertices in the independent set, but also a vertex in the maximal clique. By Remark~\ref{remark-no0not1(-1)}, we can assume that $M$ does not contain a row which has no $0$ and is not of the form $1^r(-1)^{n-r}$ for some $0 \leq r \leq n$. Hence, if a row of $M$ has no $0$, it must be $1^r(-1)^{n-r}$ for some $1 \leq r \leq n$ for graph $G_o(M)$ to have a chance to be semi-transitive.  Further, if $1^r(-1)^{n-r}$ is a row of $M$ for some $0 \leq r \leq n$, by Lemma~\ref{Lem-no0row}, we can consider the $(m-1) \times (n+1)$ matrix $N$ in the statement of the lemma instead of $M$, and every row of $N$ has a $0$.

\begin{theorem}\label{MainTheorem}
	Let $M$ be an $m \times n$ matrix over $\{-1,0,1\}$ such that every row of $M$ has a $0$. The directed split graph $G_o(M)$ is semi-transitive if and only if $M$ satisfies the following conditions:
	\begin{itemize}
		\item[{\em (i)}]  every row of $M$ is of the form $ 0^r  1^s  0^t $ or $ 0^r  (-1)^s  0^t $ or $ 1^r  0^s (-1)^t $ for $r, s, t\geq 0$, and

		\item[{\em (ii)}]  for each row of $M$ of the form $ 1^a 0^b  (-1)^c $ where $a,b,c>0$,  there is  no other row distinct from $ 1^a 0^b  (-1)^c $ such that its entries in positions $a$ and $a+b+1$ are not $0$'s.
	\end{itemize}
\end{theorem}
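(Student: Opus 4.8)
The plan is to prove both directions by translating the combinatorial conditions in Theorems~\ref{semi-tran-groups}--\ref{main-orientation} into statements about the rows of $M$. Recall that the maximal clique of $G_o(M)$ is $K_n=\{1,\ldots,n\}$ oriented as $1\to 2\to\cdots\to n$ (condition (i) of Theorem~\ref{main-orientation} is automatic here, and acyclicity within the clique is built in), and that the vertex $n+p$ of the independent set $E_m$ has edges to $K_n$ dictated by the $p$-th row of $M$: an entry $1$ in column $j$ means $j\to n+p$, an entry $-1$ means $n+p\to j$, and a $0$ means no edge. So a row is exactly the ``connection pattern'' of one independent-set vertex. By Theorem~\ref{main-orientation}, $G_o(M)$ is semi-transitive if and only if every such vertex is of type A, B or C, and the pairwise restrictions of Theorem~\ref{relative-order} hold; I would reduce (i) to the first requirement and (ii) to the second.

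First I would establish (i) $\Leftrightarrow$ ``every independent-set vertex is of type A, B or C.'' By Remark~\ref{remark-no0not1(-1)}, a $1$ to the right of a $-1$ already kills semi-transitivity, so every row may be assumed to have all its $1$'s to the left of all its $-1$'s. A type-A vertex is a source connected to a \emph{contiguous} block $\{p_i,\ldots,p_j\}$ of the path, i.e.\ its row has only $1$'s and $0$'s and the $1$'s form one contiguous block: that is exactly $0^r1^s0^t$. Symmetrically a type-B (sink) vertex corresponds to $0^r(-1)^s0^t$. A type-C vertex $v$ has $I_v=\{1,\ldots,i\}$ with edges $p\to v$ and $O_v=\{j,\ldots,n\}$ with edges $v\to p$ for $i<j$; translating, its row has a prefix block of $1$'s (positions $1..i$), then $0$'s, then a suffix block of $-1$'s (positions $j..n$) — that is $1^a0^b(-1)^c$ with $a,b,c>0$ (the strict inequality $i<j$ in Theorem~\ref{semi-tran-groups} forces $b>0$, and type C requires both a non-empty $I_v$ and a non-empty $O_v$, forcing $a,c>0$). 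A row with \emph{no} contiguous-block-of-$1$s or with both a $1$ and a $-1$ not in prefix/suffix position cannot be any of the three types, and conversely any of the listed forms realises one of the three types; the degenerate cases (all zeros, etc.) are handled by Remark~\ref{M=0} and are subsumed in the $0^r1^s0^t$ form with $s=0$. This gives the equivalence of (i) with clause (ii) of Theorem~\ref{main-orientation}.

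Next, assuming (i) holds, I would show (ii) $\Leftrightarrow$ the restrictions of Theorem~\ref{relative-order}. Fix a type-C row $1^a0^b(-1)^c$; in the notation of Theorem~\ref{relative-order} we have $|I_x|=a$ and $|O_x|=c$, so $p_{|I_x|}=p_a$ and $p_{m-|O_x|+1}=p_{a+b+1}$ (since $m=n$ and $c=n-a-b$). Theorem~\ref{relative-order} forbids (a) a type-A or type-B vertex connected to both $p_a$ and $p_{a+b+1}$, and (b) a type-C vertex $y$ with $I_y$ or $O_y$ containing both $p_a$ and $p_{a+b+1}$. In terms of rows: a type-A or type-B row is connected to both $p_a$ and $p_{a+b+1}$ precisely when its entries in columns $a$ and $a+b+1$ are both non-zero (and, being a contiguous block, it is then connected to everything in between, but we only need the endpoints); a second type-C row $1^{a'}0^{b'}(-1)^{c'}$ has $I_y=\{1,\ldots,a'\}$ or $O_y=\{a'+b'+1,\ldots,n\}$ containing both $p_a$ and $p_{a+b+1}$ exactly when columns $a$ and $a+b+1$ of that row are both $1$ (the $I_y$ case: $a'\ge a+b+1$) or both $-1$ (the $O_y$ case: $a'+b'+1\le a$) — in either case, both entries are non-zero and in fact equal. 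One checks that ``both entries non-zero'' and ``both entries non-zero and equal'' coincide here once (i) is assumed, because a type-C row with one of columns $a$, $a+b+1$ equal to $1$ and the other equal to $-1$ would force $a'\ge a+b+1$ from the $+1$ and $a'+b'+1\le a$ from the $-1$, impossible as $a<a+b+1$. Hence in all cases the forbidden configuration of Theorem~\ref{relative-order} for this type-C row is exactly the existence of another row with non-zero entries in both columns $a$ and $a+b+1$, which is what (ii) rules out; and conversely a violation of (ii) produces such a row, which by the above case analysis is type A, B, or C and violates Theorem~\ref{relative-order}.

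The main obstacle I anticipate is the bookkeeping in the last paragraph: correctly identifying $p_{|I_x|}$ and $p_{m-|O_x|+1}$ with columns $a$ and $a+b+1$ (keeping straight that here the clique has order $n$, not $m=n-m'$, and that the longest directed path is all of $1\to\cdots\to n$), and then verifying that for the three possible types of the ``other'' row, the Theorem~\ref{relative-order} condition collapses exactly to ``both of columns $a$, $a+b+1$ are non-zero'' with no spurious extra cases — in particular handling the type-C-versus-type-C sub-case where one must rule out mixed signs using (i), and making sure the row equal to $1^a0^b(-1)^c$ itself is correctly excluded as in the ``no other row'' wording of (ii). The type A/B direction and the reduction of (i) are routine given Theorems~\ref{semi-tran-groups} and \ref{main-orientation} and Remark~\ref{remark-no0not1(-1)}.
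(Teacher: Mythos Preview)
Your approach is exactly the paper's: translate conditions (ii) and (iii) of Theorem~\ref{main-orientation} into conditions (i) and (ii) on the rows of $M$. Two minor slips are harmless: you have types~A and~B swapped (a row $0^r1^s0^t$ encodes a \emph{sink}, since an entry $1$ in column $j$ means $j\to n+p$, so this is type~B, not type~A); and $i<j$ in Theorem~\ref{semi-tran-groups} only gives $b\ge0$ --- it is the hypothesis ``every row has a $0$'' that forces $b>0$ for a type-C row.

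The real gap is in your treatment of the mixed-sign case for a second type-C row $y=1^{a'}0^{b'}(-1)^{c'}$. You want ``entries of $y$ in columns $a$ and $a+b+1$ are both non-zero'' to coincide with ``$I_y$ or $O_y$ contains both $p_a$ and $p_{a+b+1}$''. The sub-case you actually rule out (column $a$ equal to $-1$ and column $a+b+1$ equal to $+1$) is already impossible for a row of shape $1^{a'}0^{b'}(-1)^{c'}$, since $a<a+b+1$ and the $1$'s precede the $(-1)$'s. The sub-case you \emph{miss} is column $a$ equal to $+1$ and column $a+b+1$ equal to $-1$: this only yields $a'\ge a$ and $a'+b'\le a+b$, which is compatible with $y\neq x$ (e.g.\ $n=5$, $x=1\,0^3(-1)$ and $y=1^2\,0\,(-1)^2$). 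For such a $y$ neither $I_y=\{1,\ldots,a'\}$ nor $O_y=\{a'+b'+1,\ldots,n\}$ contains both $p_a$ and $p_{a+b+1}$, so Theorem~\ref{relative-order} is \emph{not} violated by the pair $(x,y)$, whereas condition~(ii) \emph{is}. Your claimed equivalence therefore breaks at this sub-case; the paper's one-line proof does not spell this point out either.
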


\begin{proof} ``$\Leftarrow$'' Note that the vertices in the independent set will then be of types A, B and C, and taking into account condition (ii),  Theorem~\ref{main-orientation} can be applied to see that $G_o(M)$ is semi-transitive.\\
	
\noindent
``$\Rightarrow$'' One can see that $G_o(M)$ is well-labelled, so the clique is oriented transitively and its longest path is  $1\rightarrow 2\rightarrow\cdots\rightarrow n$. Moreover, conditions (ii) and (iii) in Theorem~\ref{main-orientation} give conditions (i) and (ii) in this theorem.
\end{proof}

\begin{corollary}\label{CorMainTheorem<=}
Let $M$ be an $m \times n$ matrix over $\{-1,0,1\}$. The directed split graph $G_o(M)$ is semi-transitive if $M$ satisfies the following conditions:
	\begin{itemize}
		\item[{\em (i)}]  every row of $M$ is of the form $ 0^r  1^s  0^t $ or $ 0^r  (-1)^s  0^t $ or $ 1^r  0^s (-1)^t $ for $r, s, t\geq 0$, and
		
		\item[{\em (ii)}]  for each row of $M$ of the form $ 1^a 0^b  (-1)^c $ where $a,c>0$ and $b\geq 0$, there is  no other row distinct from $ 1^a 0^b  (-1)^c $ such that its entries in positions $a$ and $a+b+1$ are not $0$'s.
	\end{itemize}
\end{corollary}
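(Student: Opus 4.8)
The statement of Corollary~\ref{CorMainTheorem<=} differs from the ``$\Leftarrow$'' direction of Theorem~\ref{MainTheorem} only in that it drops the hypothesis ``every row of $M$ has a $0$'' and correspondingly relaxes the constraint ``$b>0$'' to ``$b\ge 0$'' in condition (ii). So the plan is to reduce the corollary to Theorem~\ref{MainTheorem} by handling exactly those rows that have no $0$. By condition (i), a row with no $0$ must be of the form $1^r 0^s (-1)^t$ with $s=0$, i.e.\ either $1^n$, or $(-1)^n$, or $1^a(-1)^c$ with $a+c=n$ and $a,c>0$ (the cases $0^r 1^s 0^t$ and $0^r(-1)^s 0^t$ with no $0$ also give $1^n$ and $(-1)^n$).

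First I would dispose of the rows of the form $1^n$ and $(-1)^n$: by Lemma~\ref{Lem-no0row} (with $r=n$, resp.\ $r=0$), each such row can be replaced by inserting a new column of $0$'s and deleting that row, producing an isomorphic graph $G_o(N)$; iterating over all such rows, we may assume $M$ has no row equal to $1^n$ or $(-1)^n$. Next, consider a row of the form $1^a(-1)^c$ with $a,c>0$ and $a+c=n$: again Lemma~\ref{Lem-no0row} (with $r=a$) lets us insert a $0$-column in position $a+1$ and delete the row, replacing the row $1^a(-1)^c$ by an honest clique vertex. I need to check that after this transformation the resulting $(m-1)\times(n+1)$ matrix $N$ still satisfies conditions (i) and (ii) of the corollary (equivalently of Theorem~\ref{MainTheorem}, since now the row in question is gone, but we must re-examine the surviving rows relative to the new column structure). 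Each surviving row $0^r 1^s 0^t$ or $0^r(-1)^s 0^t$ is unchanged in shape after inserting a $0$ somewhere, so (i) persists; and each surviving row $1^{a'}0^{b'}(-1)^{c'}$ with $a',c'>0$: inserting a $0$ in position $a+1$ keeps it of the form $1^{a'}0^{b''}(-1)^{c'}$ with possibly larger middle block, and its ``critical positions'' $a'$ and $a'+b''+1$ still carry the same entries as before up to the re-indexing, so condition (ii) is preserved. The one subtlety is that the deleted row $1^a(-1)^c$ had critical positions $a$ and $a+1$ (here $b=0$), and condition (ii) of the corollary guaranteed no other surviving row has nonzero entries in both positions $a$ and $a+1$ of the original $M$; after inserting the $0$-column these become positions $a$ and $a+2$ of $N$, and the new column $a+1$ is all $0$, so no surviving row violates the analogous condition for the new clique vertex — this is exactly what makes the reduction go through.

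After these reductions we reach a matrix $M'$ (of size $m'\times n'$) every row of which has a $0$ and which satisfies conditions (i) and (ii) of Theorem~\ref{MainTheorem}, with $G_o(M')\cong G_o(M)$. Applying the ``$\Leftarrow$'' direction of Theorem~\ref{MainTheorem} gives that $G_o(M')$, hence $G_o(M)$, is semi-transitive, completing the proof.

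\textbf{Main obstacle.} The routine part is the bookkeeping of how the row-shapes and the ``critical positions'' $a$ and $a+b+1$ transform under the column-insertions of Lemma~\ref{Lem-no0row}; the genuinely load-bearing point is verifying that condition (ii) of the corollary — stated for the original $M$ with the weaker $b\ge 0$ — is precisely strong enough to yield condition (ii) of Theorem~\ref{MainTheorem} for the reduced matrix $M'$, i.e.\ that promoting a $1^a(-1)^c$ row to a clique vertex does not create a forbidden configuration among the type-C rows of $M'$. One should also double-check the corner cases where several no-$0$ rows are present simultaneously (e.g.\ two distinct rows $1^{a_1}(-1)^{c_1}$ and $1^{a_2}(-1)^{c_2}$), making sure the successive applications of Lemma~\ref{Lem-no0row} are compatible and that condition (ii) — which explicitly allows a repeated identical row — is not violated in the process.
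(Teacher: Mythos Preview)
Your proposal is correct and follows essentially the same route as the paper: reduce to Theorem~\ref{MainTheorem} via Lemma~\ref{Lem-no0row}, and then check that the transformed matrix still satisfies conditions (i) and (ii). The paper's proof is terser than yours: it picks \emph{one} row $1^r(-1)^{n-r}$ (allowing $r=0$ or $r=n$), applies Lemma~\ref{Lem-no0row} once, and simply asserts that the resulting $N$ satisfies (i), (ii) and that every row of $N$ has a $0$. The last point is the key observation you are missing as a shortcut: the inserted zero column gives \emph{every} surviving row a $0$, so a single application of Lemma~\ref{Lem-no0row} already lands you in the hypotheses of Theorem~\ref{MainTheorem}; there is no need to iterate over all no-$0$ rows. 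Your more cautious iterative plan is not wrong, just redundant.

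Your ``main obstacle'' and the corner case you flag (two distinct rows $1^{a_1}(-1)^{c_1}$ and $1^{a_2}(-1)^{c_2}$) are in fact non-issues, and seeing why streamlines the bookkeeping. Condition (ii) with $b\ge 0$, applied to a row $1^a(-1)^c$ with $a,c>0$, forbids any other distinct row from being nonzero in both positions $a$ and $a+1$; since any no-$0$ row is nonzero everywhere, there can be at most one pattern $1^a(-1)^c$ with $a,c>0$ among the rows, and $1^n$, $(-1)^n$ are excluded as well. Likewise, condition (ii) applied to any type-C row $1^{a'}0^{b'}(-1)^{c'}$ with $b'>0$ is violated by the presence of any no-$0$ row. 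Hence the surviving rows after deleting the chosen $1^r(-1)^{n-r}$ are either all of type A/B (when $r\in\{0,n\}$) or consist of type A/B rows together with further identical copies of $1^a(-1)^c$ (when $0<r=a<n$); in either situation it is straightforward that inserting the zero column preserves (i) and (ii). This is exactly the verification the paper leaves implicit.
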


\begin{proof}
	The intersection of row $i$ and column $j$ in $S_o(M)$ represents connection between vertices $i$ and $j$ in $G_o(M)$. Assume that $M$ satisfies the conditions (i) and (ii). If every row of $M$ has a $0$, then the result follows from Theorem~\ref{MainTheorem}. Suppose that there is a row $p$ of $M$ of the form $1^r(-1)^{n-r}$ where $1 \leq p \leq m$ and $0 \leq r \leq n$. Then, $\{1,2, \ldots,n, n+p\}$  is the maximal clique in the directed graph $G_o(M)$. By Lemma~\ref{Lem-no0row}, we have that $G_o(M)$ is isomorphic to $G_o(N)$, where $N$ is the matrix obtained from $M$ by deleting row $p$ and adding a zero-column between columns $r$ and $r+1$ (in the cases of $r=0$ and $r=n$, the zero-column will be the first column and the last column, respectively). Note that $N$ still satisfies conditions (i) and (ii) and every row of $N$ has a $0$. Applying Theorem~\ref{MainTheorem}, we have that $G_o(N)$ and $G_o(M)$ are semi-transitive.
\end{proof}

Corollary~\ref{CorMainTheorem<=} shows that the converse in Theorem~\ref{MainTheorem} is still true without the condition ``every row of $M$ has a $0$". However, the forward direction is not necessarily true then. Indeed, consider matrix $M=\begin{bmatrix} 1&0&-1&-1\\1&1&-1&-1\\0&0&1&0 \end{bmatrix}$. Note that $M$ does not satisfy condition (ii) in Theorem~\ref{MainTheorem}, while $G_o(M)$ is semi-transitive. So, the condition ``every row of $M$ has a $0$" is necessary for the forward direction in Theorem~\ref{MainTheorem}.

\begin{corollary}\label{CorMainTheorem=>}
	Let $M$ be an $m \times n$ matrix over $\{-1,0,1\}$. If the split graph $G_o(M)$ is semi-transitive, then every row of $M$ is of the form $ 0^r  1^s  0^t $ or $ 0^r  (-1)^s  0^t $ or $ 1^r  0^s (-1)^t $ for $r, s, t\geq 0$.
\end{corollary}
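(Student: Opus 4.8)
The plan is to deduce the statement from the forward direction of Theorem~\ref{MainTheorem} after reducing to the case in which every row of $M$ contains a $0$.

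First I would dispose of the rows of $M$ that have no $0$. Since $G_o(M)$ is semi-transitive it is in particular acyclic, so by Remark~\ref{remark-no0not1(-1)} no row of $M$ has a $1$ strictly to the right of a $-1$; hence any row of $M$ with no $0$ must be $1^r(-1)^{n-r}$ for some $0\le r\le n$, which is already of the desired form $1^r0^0(-1)^{n-r}$. Thus it remains to show that every row of $M$ that \emph{does} contain a $0$ is of one of the three forms. If every row of $M$ contains a $0$, this is exactly condition (i) in the forward direction of Theorem~\ref{MainTheorem}, and we are done. Otherwise, pick a row $p$ of $M$ with no $0$; by the above it equals $1^r(-1)^{n-r}$, so Lemma~\ref{Lem-no0row} applies and yields the $(m-1)\times(n+1)$ matrix $N$ (delete row $p$, insert a zero column between columns $r$ and $r+1$) with $G_o(N)\cong G_o(M)$. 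Crucially, \emph{every} row of $N$ now contains a $0$ (the inserted one), and $G_o(N)$ is semi-transitive, so Theorem~\ref{MainTheorem} applies to $N$: every row of $N$ has one of the three stated forms.

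Finally I would transfer the conclusion back to $M$. Each row of $M$ other than row $p$ is obtained from the corresponding row of $N$ by deleting the entry in the inserted column, and that entry is $0$. Since the $N$-row has one of the three forms and its entry in the deleted position is $0$, that position lies inside one of the $0$-blocks of the $N$-row, so the resulting $M$-row has the same type; in the case of a form $1^a0^b(-1)^c$, note that $N$ was constructed so that \emph{every} row of $N$ has a $0$, whence $b\ge 1$ and the deleted $0$ sits in the $0^b$-block, leaving $1^a0^{b-1}(-1)^c$. Together with the first step (which handles row $p$), this shows every row of $M$ is of one of the three forms.

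The argument is essentially bookkeeping and I do not expect a genuine obstacle; the only point needing care is the last step — checking that removing the single inserted $0$ from a row of $N$ of one of the three forms again produces a row of one of the three forms — together with the observation that the construction of $N$ guarantees every row contains a $0$, which is what rules out a degenerate row $1^a(-1)^c$ in $N$ collapsing the argument.
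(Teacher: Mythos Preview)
Your proposal is correct and follows essentially the same route as the paper's proof: reduce to the case where every row has a $0$ via Lemma~\ref{Lem-no0row}, apply the forward direction of Theorem~\ref{MainTheorem} to $N$, and then observe that deleting the inserted $0$ from each row of $N$ preserves the three admissible forms. If anything, you are slightly more explicit than the paper, since you invoke Remark~\ref{remark-no0not1(-1)} to justify that any zero-free row of $M$ must already be $1^r(-1)^{n-r}$ and you spell out why the deleted entry lies in a $0$-block.
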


\begin{proof}
	Assume that $G_o(M)$ is semi-transitive. 
	If every row of $M$ has a $0$, then the result follows from Theorem~\ref{MainTheorem}. Suppose that there is a row $p$ in $M$ such that $m_{p1}m_{p2} \cdots m_{pn}$ is $1^{\ell}(-1)^{n-\ell}$ for some $0 \leq \ell \leq n$. So, the clique is oriented transitively and its longest path is $1\rightarrow 2\rightarrow\cdots \rightarrow\ell \rightarrow (n+p) \rightarrow (\ell+1) \rightarrow \cdots \rightarrow n$. Let $N$ be the matrix 
	$$\begin{bmatrix}
	m_{11}	  &m_{12}	 &\cdots&m_{1\ell}	   &0	  &m_{1(\ell+1)}&\cdots&m_{1n}\\	
	m_{21}	  &m_{22}	 &\cdots&m_{2\ell}	   &0	  &m_{2(\ell+1)}&\cdots&m_{2n}\\
	\vdots 	  &\vdots	 &      &\vdots    &\vdots&\vdots 	 &      &\vdots \\
	m_{(p-1)1}&m_{(p-1)2}&\cdots&m_{(p-1)\ell}&0	  &m_{(p-1)(\ell+1)}&\cdots&m_{(p-1)n}\\
	m_{(p+1)1}&m_{(p+1)2}&\cdots&m_{(p+1)\ell}&0	  &m_{(p+1)(\ell+1)}&\cdots&m_{(p+1)n}\\
	\vdots 	  &\vdots	 &      &\vdots    &\vdots&\vdots    &      & \vdots \\
	m_{m1}	  &m_{m2}	 &\cdots&m_{m\ell}	   &0	  &m_{m(\ell+1)}&\cdots&m_{mn}
	\end{bmatrix}.$$
	By Lemma~\ref{Lem-no0row}, we have that $G_o(M)$ is isomorphic to $G_o(N)$, and hence $G_o(N)$ is semi-transitive. Further, by Theorem~\ref{MainTheorem}, we have that $m_{i1}m_{i2} \cdots m_{i\ell} 0 m_{i(\ell+1)} \cdots m_{in}$ is of the form $ 0^r  1^s  0^t $ or $ 0^r  (-1)^s  0^t $ or $ 1^r  0^s (-1)^t $ for any $i \in \{1,2,\ldots p-1,p+1,\ldots,m\}$. Note that $m_{i1}m_{i2} \cdots m_{in}$ is also of the form $ 0^r  1^s  0^t $ or $ 0^r  (-1)^s  0^t $ or $ 1^r  0^s (-1)^t $ for any $i \in \{1,2,\ldots p-1,p+1,\ldots,m\}$. Hence, every row of $M$ is of the form $ 0^r  1^s  0^t $ or $ 0^r  (-1)^s  0^t $ or $ 1^r  0^s (-1)^t $ for $r, s, t\geq 0$.
\end{proof}

\begin{corollary}\label{corr-MainTheorem}
	Let $M$ be an $m \times n$ matrix over $\{-1,0,1\}$. If every row of $M$ is of the form $ 0^r  1^s  0^t $ or $ 0^r  (-1)^s  0^t $ for  $r, s, t\geq 0$, then the graph $G_o(M)$ is semi-transitive.
\end{corollary}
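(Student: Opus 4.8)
The plan is to derive Corollary~\ref{corr-MainTheorem} as an immediate special case of Corollary~\ref{CorMainTheorem<=}. The point is that the hypothesis here is stronger than condition (i) of Corollary~\ref{CorMainTheorem<=}: we only allow rows of the form $0^r 1^s 0^t$ or $0^r (-1)^s 0^t$, which are precisely the rows $1^a 0^b (-1)^c$ with either $a=0$ or $c=0$. So condition (i) of Corollary~\ref{CorMainTheorem<=} holds trivially, and it remains only to check condition (ii) there.

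First I would observe that condition (ii) of Corollary~\ref{CorMainTheorem<=} is a restriction that is triggered only by rows of the form $1^a 0^b (-1)^c$ with $a>0$ and $c>0$ (a genuine ``type C'' row, in the language of Theorem~\ref{semi-tran-groups}). Under the hypothesis of the present corollary, no such row exists: every row has all its nonzero entries of the same sign, so in the notation $1^a 0^b (-1)^c$ we always have $a=0$ or $c=0$. Hence condition (ii) is vacuously satisfied — there is simply no row to which it applies. Therefore both conditions (i) and (ii) of Corollary~\ref{CorMainTheorem<=} hold, and that corollary yields that $G_o(M)$ is semi-transitive.

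There is essentially no obstacle here; the only thing to be careful about is making sure the degenerate cases $r=0$ or $t=0$ (and $s=0$, i.e.\ a zero row) are correctly subsumed. A zero row contributes an isolated vertex and causes no trouble (cf.\ Remark~\ref{M=0}), and a row $0^r 1^s 0^t$ or $0^r (-1)^s 0^t$ is exactly a type A or type B vertex connected to a contiguous block $\{p_{r+1}, \ldots, p_{r+s}\}$ of the clique, which is always admissible by Theorem~\ref{semi-tran-groups}. So the argument goes through verbatim, and I would simply write the proof as a one-line appeal to Corollary~\ref{CorMainTheorem<=}.

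\begin{proof}
Every row of the form $0^r 1^s 0^t$ or $0^r (-1)^s 0^t$ is, in particular, of the form $1^r 0^s 0^t$ or $0^r 0^s (-1)^t$, hence of the form $1^a 0^b (-1)^c$ with $a=0$ or $c=0$. Thus $M$ satisfies condition (i) of Corollary~\ref{CorMainTheorem<=}. Moreover, $M$ has no row of the form $1^a 0^b (-1)^c$ with $a,c>0$, so condition (ii) of Corollary~\ref{CorMainTheorem<=} is vacuously satisfied. By Corollary~\ref{CorMainTheorem<=}, $G_o(M)$ is semi-transitive.
\end{proof}
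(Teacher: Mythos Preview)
Your approach is correct and is essentially the paper's own route, just packaged one level higher: the paper appeals directly to Theorem~\ref{MainTheorem} and handles the all-$1$/all-$(-1)$ rows via Lemma~\ref{Lem-no0row}, whereas you invoke Corollary~\ref{CorMainTheorem<=}, which already absorbs that reduction. Either way the content is the same: condition~(i) holds because the allowed row shapes are a subset of those listed, and condition~(ii) is vacuous because no row has the form $1^a0^b(-1)^c$ with $a,c>0$.

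One small correction: the first sentence of your formal proof is false as written. A row such as $0^2 1^3 0^2$ is \emph{not} of the form $1^r 0^s 0^t$ or $0^r 0^s (-1)^t$. Fortunately this claim is unnecessary: condition~(i) of Corollary~\ref{CorMainTheorem<=} already lists $0^r 1^s 0^t$ and $0^r (-1)^s 0^t$ explicitly, so it is satisfied trivially. For condition~(ii), simply observe that a row $0^r 1^s 0^t$ has no entry equal to $-1$ and a row $0^r(-1)^s0^t$ has no entry equal to $1$, so neither can equal $1^a0^b(-1)^c$ with $a,c>0$. Drop the erroneous sentence and the proof is clean.
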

\begin{proof}
	If every row of $M$ has a $0$, then by Theorem~\ref{MainTheorem}, $G_o(M)$ is semi-transitive. Suppose there is an all $1$'s or all $(-1)$'s row in $M$. By Lemma~\ref{remark-no0not1(-1)}, we obtain a matrix $N$ such that every row has a $0$ and $G_o(M)$ is isomorphic to $G_o(N)$. Note that every row of $N$ is also of the form $ 0^x  1^y  0^z $ or $ 0^x  (-1)^y  0^z $ for $x, y, z\geq 0$. Thus, both $G_o(N)$ and $G_o(M)$ are semi-transitive.
\end{proof}

\begin{corollary}\label{010 and 10-1}
	If $M$ is a matrix over $\{-1,0,1\}$ containing a row of the form $11 \cdots 1$ and a row of the form $1^r0^s(-1)^t$ for some non-negative integers $r,s,t$ such that $r,t \neq 0$, then $G_o(M)$ is not semi-transitive.
\end{corollary}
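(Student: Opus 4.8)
The plan is to exhibit an explicit short directed path together with a forbidden chord, producing a violation of the semi-transitivity condition. Suppose $M$ contains a row $\rho_1$ of the form $11\cdots 1$ (all $1$'s, of length $n$) and a row $\rho_2$ of the form $1^r0^s(-1)^t$ with $r,t\geq 1$ and $r+s+t=n$. By Lemma~\ref{Lem-no0row}, the all-$1$'s row $\rho_1$ forces the maximal clique to be $K_n\cup\{v_1\}$, where $v_1$ is the independent-set vertex corresponding to $\rho_1$; moreover $v_1$ sits at the \emph{top} of the transitively oriented clique, i.e.\ $v_1\rightarrow j$ for every $j\in\{1,\ldots,n\}$, so after relabelling the longest directed path of the maximal clique is $v_1\rightarrow 1\rightarrow 2\rightarrow\cdots\rightarrow n$.

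Now consider the independent-set vertex $v_2$ corresponding to $\rho_2=1^r0^s(-1)^t$. Since the entry in column $r$ is $1$, there is an edge $r\rightarrow v_2$; since the entry in column $r+s+1$ is $-1$, there is an edge $v_2\rightarrow (r+s+1)$. So $v_2$ is a type-C vertex with $I_{v_2}=\{1,\ldots,r\}$ and $O_{v_2}=\{r+s+1,\ldots,n\}$ \emph{relative to the clique $K_n$}. The key observation is that once we include $v_1$ in the maximal clique, the path $v_1\rightarrow 1\rightarrow\cdots\rightarrow r\rightarrow v_2$ exists (using $v_1\rightarrow 1$ from $\rho_1$, the clique edges, and $r\rightarrow v_2$ from $\rho_2$). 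Then I would extend: $v_1\rightarrow \cdots \rightarrow r\rightarrow v_2\rightarrow (r+s+1)$ is a directed path from $v_1$ to $r+s+1$, but the chord $v_1\rightarrow(r+s+1)$ is present (it is a clique edge, since $v_1$ dominates all of $K_n$). Semi-transitivity would then force \emph{all} intermediate chords to be present; in particular it forces the edge $j\rightarrow v_2$ for every $j$ with $1\le j\le r$, which holds, and the edge $v_2\rightarrow k$ is not at issue — so I must instead find the missing chord. The actual obstruction is: take the path $v_1\rightarrow 1\rightarrow\cdots\rightarrow (r+s+1)$ (entirely inside the clique) and note $v_2$ is reachable as $r\rightarrow v_2\rightarrow(r+s+1)$; better, consider the path from $v_1$ through $v_2$: $v_1\rightarrow\cdots\rightarrow r\rightarrow v_2\rightarrow(r+s+1)\rightarrow\cdots\rightarrow n$, with shortcut edge $v_1\rightarrow n$. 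Semi-transitivity forces the edge $v_2\rightarrow j$ for all $j$ on the path after $v_2$, which is fine, but it \emph{also} forces $r'\rightarrow v_2$ for all clique vertices $r'$ before $v_2$ on the path, i.e.\ $r'\in\{1,\ldots,r,\, r+1,\ldots, r+s\}$ — and in particular $j\rightarrow v_2$ for $j=r+1,\ldots,r+s$, which does \emph{not} hold since those entries of $\rho_2$ are $0$. This is the contradiction (it is exactly the shortcut-forcing failure; equivalently, one can invoke Theorem~\ref{relative-order}: $v_2$ is type-C relative to $K_n\cup\{v_1\}$ with $I_{v_2}$ containing $v_1$, and applying Theorem~\ref{main-orientation} to the genuine maximal clique shows condition (ii) of Theorem~\ref{semi-tran-groups} is violated).

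To keep the argument clean I would actually route it through the established machinery rather than re-deriving path conditions by hand: first relabel via Lemma~\ref{Lem-no0row} so that $v_1$ is vertex $r+1$ of a clique of order $n+1$ and every remaining row acquires a $0$ in the new column $r+1$; then observe that the row obtained from $\rho_2$ becomes $1^r\,0\,0^s(-1)^t = 1^r0^{s+1}(-1)^t$, which is still of the admissible form $1^{a}0^{b}(-1)^{c}$ with $a=r$, $b=s+1$, $c=t$ in the $(n+1)$-column matrix $N$. But the all-$1$'s row $\rho_1$ has been deleted, and the only thing that changed about the \emph{other} rows is the inserted zero column, which lies strictly between columns $a=r$ and $a+b+1 = r+s+2$ of $N$; hence the pair (position $a$, position $a+b+1$) of the type-C row of $N$ coincides (up to the shift past the inserted column) with (position $r$, position $r+s+1$) of the original. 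Then condition (ii) of Theorem~\ref{MainTheorem} is the statement to check: I claim it fails, because — wait, it need not fail from $\rho_2$ alone. The correct contradiction is more direct: in $G_o(M)$ the maximal clique is $K_n\cup\{v_1\}$ and $v_1$ is its source, so relative to the \emph{true} longest clique path $v_1\rightarrow1\rightarrow\cdots\rightarrow n$ the vertex $v_2$ has an in-edge from $r$ and an out-edge to $r+s+1$, with a "gap" in the middle; but $v_2$ also has the clique vertex $v_1$ strictly before its first in-neighbour $r$ with \emph{no} edge between $v_1$ and $v_2$ (since $v_2$'s row, extended, has entry $0$ in column $v_1=r+1$... no, $v_1$ is not a column of $M$).

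Let me state the plan in its cleanest form: I would apply Lemma~\ref{Lem-no0row} to convert $M$ into $N$ (an $(m-1)\times(n+1)$ matrix, every row with a $0$), note $G_o(M)\cong G_o(N)$, and then verify that $N$ violates Theorem~\ref{MainTheorem} — specifically that the row of $N$ coming from $\rho_2$ is $1^r0^{s+1}(-1)^t$ (a genuine $1^a0^b(-1)^c$ row with $a,b,c>0$) while the inserted zero column creates no new conflict but the structure nonetheless fails condition (i) or (ii) of Theorem~\ref{MainTheorem}. The hard part — and the step I expect to be the real obstacle — is pinning down \emph{which} condition of Theorem~\ref{MainTheorem} the matrix $N$ violates and checking it does so for \emph{every} choice of the other rows: since the hypothesis only names two rows, the contradiction must come from the interaction of $\rho_2$'s type-C structure with the fact that $v_1$ (now a clique vertex preceding $1$) forces the longest clique path to start at $v_1$, so that $v_2$'s in-neighbourhood in the clique is $\{v_1$'s successors up to $r\}$ but $v_2$ is \emph{not} adjacent to $v_1$ itself — I must confirm this is precisely the failure of "$v_2$ is of type A, B or C" in Theorem~\ref{semi-tran-groups} once the clique is taken to be maximal, and hence of Theorem~\ref{main-orientation}(ii). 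I would write this last verification out carefully as the core of the proof, with the Lemma~\ref{Lem-no0row} relabelling handled in one or two sentences.
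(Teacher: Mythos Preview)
Your proposal has a concrete error that derails the whole argument: you have the orientation of $v_1$ backwards. An all-$1$'s row means $m_{pj}=1$ for every $j$, and by the convention of $L(A)$ this gives edges $j\rightarrow v_1$ for all $j\in\{1,\ldots,n\}$. So $v_1$ is the \emph{sink} of the enlarged clique, not the source; the longest clique path is $1\rightarrow 2\rightarrow\cdots\rightarrow n\rightarrow v_1$. In the language of Lemma~\ref{Lem-no0row}, the all-$1$'s row is $1^n(-1)^0$, i.e.\ the lemma's parameter is $r=n$, so the inserted zero column goes at position $n+1$ (the last column), not between columns $r$ and $r+1$ of the corollary's $1^r0^s(-1)^t$.

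With the correct placement the proof is one line, and this is exactly what the paper does: after deleting the all-$1$'s row and appending a zero column at the right, the row $\rho_2=1^r0^s(-1)^t$ becomes $1^r0^s(-1)^t0$ in $N$. Since $r,t\ge 1$, this string is of none of the three forms $0^a1^b0^c$, $0^a(-1)^b0^c$, $1^a0^b(-1)^c$, so Corollary~\ref{CorMainTheorem=>} immediately gives that $G_o(N)\cong G_o(M)$ is not semi-transitive. Your version produced $1^r0^{s+1}(-1)^t$, which \emph{is} of the admissible shape, and that is why you could not locate a violated condition and ended up chasing chords by hand. (If you want a direct path argument instead, the correct one is: on $1\rightarrow\cdots\rightarrow r\rightarrow v_2\rightarrow(r+s+1)\rightarrow\cdots\rightarrow n\rightarrow v_1$ the shortcut $1\rightarrow v_1$ is present but the chord $v_2\rightarrow v_1$ is absent, since $v_1,v_2$ were both independent-set vertices of $G_o(M)$.)
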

\begin{proof}
	By Lemma~\ref{Lem-no0row}, the directed graph $G_o(M)$ is isomorphic to the graph $G_o(N)$, where $N$ is a matrix such that every row of $N$ has a $0$ and $N$ contains a row of the form $1^r0^s(-1)^t0$. By Corollary~\ref{CorMainTheorem=>}, $G_o(N)$ is not semi-transitive, and so is $G_o(M)$.
\end{proof}

\begin{corollary} \label{0-10 and 10-1}
 		If $M$ is a matrix over $\{-1,0,1\}$ containing a row of the form $(-1)(-1) \cdots (-1)$ and a row of the form $1^r0^s(-1)^t$ for some non-negative integers $r,s,t$ such that $r,t \neq 0$, then $G_o(M)$ is not semi-transitive.
	\end{corollary}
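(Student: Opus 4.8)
The plan is to mirror the proof of Corollary~\ref{010 and 10-1} almost verbatim, replacing the all-$1$'s row with the all-$(-1)$'s row and invoking Lemma~\ref{Lem-no0row} with $r=0$ instead of $r=n$. Concretely, suppose $M$ contains a row $w$ equal to $(-1)^n$ and a row $v$ of the form $1^r0^s(-1)^t$ with $r,t\neq 0$ (so in particular $r+s+t=n$ and $s\geq 0$). Since $w$ has no $0$ and is of the form $1^0(-1)^n$, Lemma~\ref{Lem-no0row} applies with $p$ the index of $w$ and the parameter ``$r$'' of that lemma set to $0$: the vertex $n+p$ joins the clique and the relabelling produces an $(m-1)\times(n+1)$ matrix $N$ in which a zero column is inserted as the new first column. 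Every row of $N$ then has a $0$, and the image of the row $v$ under this operation is $01^r0^s(-1)^t$, which still has a $1$ strictly to the left of a $-1$ but is no longer of the form $0^a1^b0^c$, $0^a(-1)^b0^c$, nor $1^a0^b(-1)^c$ (the leading $0$ followed by $1$'s followed eventually by $-1$'s breaks all three shapes).

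The second step is to apply Corollary~\ref{CorMainTheorem=>} to $N$: since $N$ has a row not of any of the three permitted forms, $G_o(N)$ is not semi-transitive. By Lemma~\ref{Lem-no0row}, $G_o(M)\cong G_o(N)$, so $G_o(M)$ is not semi-transitive either. That completes the argument. One should double-check the degenerate possibilities: if $s=0$ the row $v$ is $1^r(-1)^t$ with $r,t>0$, which has no $0$ and is not of the form $1^a0^b(-1)^c$ with $b>0$ — but here Remark~\ref{remark-no0not1(-1)} already gives a cycle $q\rightarrow r\rightarrow n+p'\rightarrow q$ directly (taking $q\le r$ a position with a $1$ after a $-1$ is impossible, so instead use that $v=1^r(-1)^t$ is allowed, and we should instead appeal to Lemma~\ref{Lem-no0row} a second time on row $v$); cleaner is simply to note that when $s=0$ we may first apply Lemma~\ref{Lem-no0row} to the row $v$ itself, obtaining $1^r0(-1)^t$ after inserting a zero column, and then we are back in the case $s>0$. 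Alternatively, and most economically, one observes that $s=0$ means $M$ contains both $1^r(-1)^t$ and $(-1)^n$; applying Lemma~\ref{Lem-no0row} once to $v$ gives a matrix with a row $1^r0(-1)^t$ and a row $(-1)^{n+1}$, reducing to the main case.

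I do not expect any genuine obstacle here: the statement is a routine variation of Corollary~\ref{010 and 10-1}, and the only care needed is bookkeeping of the position of the inserted zero column (first column when the clique-row is all $-1$'s, as opposed to last column for all $1$'s) and the handling of the $s=0$ boundary case. If anything is ``hard'' it is purely presentational — making sure the reduced matrix $N$ is correctly described so that Corollary~\ref{CorMainTheorem=>} genuinely applies — but this is immediate from the shape $01^r0^s(-1)^t$ of the transformed row.
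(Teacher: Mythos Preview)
Your main argument is correct and is exactly the paper's approach: apply Lemma~\ref{Lem-no0row} to the all-$(-1)$ row (with the lemma's parameter equal to $0$), so that a zero column is prepended and the row $1^r0^s(-1)^t$ becomes $01^r0^s(-1)^t$, which fails all three patterns in Corollary~\ref{CorMainTheorem=>}; hence $G_o(N)$, and therefore $G_o(M)$, is not semi-transitive.

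Your digression on the $s=0$ case is unnecessary and a bit muddled: when $s=0$ the transformed row is $01^r(-1)^t$ with $r,t>0$, and this is \emph{already} not of the form $0^a1^b0^c$, $0^a(-1)^b0^c$, or $1^a0^b(-1)^c$ (the last would require either a leading $1$ or no $1$'s at all). So no second application of Lemma~\ref{Lem-no0row} or appeal to Remark~\ref{remark-no0not1(-1)} is needed; the main argument covers $s=0$ uniformly.
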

\begin{proof}
	By Lemma~\ref{Lem-no0row}, the directed graph $G_o(M)$ is isomorphic to the graph $G_o(N)$, where $N$ is a matrix without all $1$'s or all $(-1)$'s row and $N$ contains a row of the form $01^r0^s(-1)^t$. By Corollary~\ref{CorMainTheorem=>}, $G_o(N)$ is not semi-transitive, and so is $G_o(M)$.
\end{proof}

\begin{definition}
	A matrix $M$ is said to be a {\em layered matrix} if all entries in the same row of $M$ are identical.
\end{definition}

The next result is a straightforward corollary of Corollary~\ref{corr-MainTheorem}.

\begin{corollary}\label{Mall0all1}
	Let $M$ be an $m\times n$ matrix over $\{-1,0,1\}$. If $M$ is a layered matrix, then $G_o(M)$ is semi-transitive.
\end{corollary}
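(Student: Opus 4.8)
The plan is to derive this directly from Corollary~\ref{corr-MainTheorem}. The definition of a layered matrix says that every row has all entries identical, so each row of $M$ is one of $0^n$, $1^n$, or $(-1)^n$. The first case $0^n$ fits the pattern $0^r1^s0^t$ with $r=n$, $s=t=0$ (equivalently $0^r(-1)^s0^t$). The second case $1^n$ fits $0^r1^s0^t$ with $r=0$, $s=n$, $t=0$. The third case $(-1)^n$ fits $0^r(-1)^s0^t$ with $r=0$, $s=n$, $t=0$. Hence every row of $M$ is of the form $0^r1^s0^t$ or $0^r(-1)^s0^t$ for some $r,s,t\geq 0$, which is exactly the hypothesis of Corollary~\ref{corr-MainTheorem}, and we conclude that $G_o(M)$ is semi-transitive.

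There is essentially no obstacle here; the work is just observing that the three constant-row types are degenerate instances of the two admissible "single block" shapes in Corollary~\ref{corr-MainTheorem}. One small point worth spelling out is that Corollary~\ref{corr-MainTheorem} already internally handles the rows with no $0$ (the all-$1$'s and all-$(-1)$'s rows) via Lemma~\ref{remark-no0not1(-1)} and the reduction to a matrix $N$ in which every row has a $0$, so we do not need to invoke Lemma~\ref{Lem-no0row} or Theorem~\ref{MainTheorem} ourselves — a bare appeal to Corollary~\ref{corr-MainTheorem} suffices. So the proof is a two-line reduction: classify the rows of a layered matrix into the three constant strings, note each is subsumed by the row shapes allowed in Corollary~\ref{corr-MainTheorem}, and apply it.

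\begin{proof}
Since $M$ is layered, every row of $M$ has all entries equal, so each row is $0^n$, $1^n$, or $(-1)^n$. Each of these is of the form $0^r1^s0^t$ or $0^r(-1)^s0^t$ for suitable $r,s,t\geq 0$: take $(r,s,t)=(n,0,0)$ for $0^n$, and $(r,s,t)=(0,n,0)$ for $1^n$ and for $(-1)^n$. Thus $M$ satisfies the hypothesis of Corollary~\ref{corr-MainTheorem}, and therefore $G_o(M)$ is semi-transitive.
\end{proof}
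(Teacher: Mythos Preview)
Your proof is correct and matches the paper's approach exactly: the paper simply states that this is ``a straightforward corollary of Corollary~\ref{corr-MainTheorem}'' without further argument, and your two-line reduction is precisely that straightforward verification.
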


\section{Directed split graphs generated by iterations of morphisms}

\begin{definition}
	Let  $A,B,C$ be $m\times n$ matrices over $\{-1,0,1\}$. The matrix $M^k(A,B,C)$ is the $k^{th}$-iteration of the $2$-dimensional morphism applied to the $1 \times 1$ matrix $\left[ 0 \right]$ which maps $\left[ 0 \right] \rightarrow A$, $\left[ 1 \right] \rightarrow B$ and $\left[ -1 \right] \rightarrow C$. Moreover, we write $S_o^k(A,B,C)$ for the matrix $S_o(M^k(A,B,C))$ and $G_o^k(A,B,C)$ for the graph with the adjacency matrix $S_o^k(A,B,C)$.
\end{definition}

	\begin{example} \label{ExABC}
		Let $A=\begin{bmatrix}0&1\\0&-1\end{bmatrix}$, $B=\begin{bmatrix}-1&-1\\1&0	\end{bmatrix}$  and $C=\begin{bmatrix}1&1\\-1&-1 \end{bmatrix}$. Then we have	$M^0(A,B,C)=\begin{bmatrix}0\end{bmatrix}$, 
		$M^1(A,B,C)=\begin{bmatrix}0&1\\0&-1\end{bmatrix}$ and\\
		$M^2(A,B,C)=\begin{bmatrix}0&1&-1&-1\\0&-1&1&0\\0&1&1&1\\0&-1&-1&-1\end{bmatrix}$.
		Hence, $S_o^2(A,B,C)$ is the matrix $$\begin{bmatrix} 0&0&0&0&0&0&0&0\\
		1&0&0&0&0&0&0&0\\
		1&1&0&0&0&0&0&0\\
		1&1&1&0&0&0&0&0\\
		0&1&-1&-1&0&0&0&0\\
		0&-1&1&0&0&0&0&0\\
		0&1&1&1&0&0&0&0\\
		0&-1&-1&-1&0&0&0&0\end{bmatrix} $$ and $G_o^2(A,B,C)$ is shown in Figure~\ref{ExampleSABC}.
	\end{example}
	
	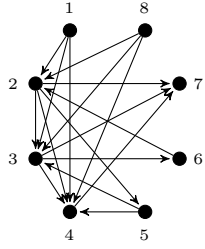
\begin{figure}
		\begin{center}	
				\begin{tikzpicture}[->,>=stealth', shorten >=1pt,node distance=1cm,auto,main node/.style={fill,circle,draw,inner sep=0pt,minimum size=5pt}]

				\node[main node] (1) {};	
				\node [above of=1, xshift=-0.1mm, yshift=-7mm] {\tiny{1}};
				\node[main node] (2) [below left of=1, xshift=0.25cm] {};
				\node [left of=2, xshift=7mm, yshift=0] {\tiny{2}};
				\node[main node] (3) [below of=2] {};
				\node [left of=3, xshift=7mm, yshift=0] {\tiny{3}};
				\node[main node] (4) [below right of=3, xshift=-0.25cm] {};
				\node [below of=4, xshift=-0.1mm, yshift=7mm] {\tiny{4}};

				\node[main node] (5) [right of=4] {};
				\node [below of=5, xshift=-0.1mm, yshift=7mm] {\tiny{5}};
                                  \node[main node] (6) [above right of=5, xshift=-0.25cm] {};
				\node [right of=6, xshift=-7.5mm, yshift=0] {\tiny{6}};
				\node[main node] (7) [above of=6] {};
				\node [right of=7, xshift=-7.5mm, yshift=0] {\tiny{7}};
				\node[main node] (8) [above left of=7,xshift=0.25cm] {};
				\node [above of=8, xshift=-0.1mm, yshift=-7mm] {\tiny{8}};
				
				\path
				(1) edge (2)
				(1) edge (3)
				(1) edge (4)
				(2) edge (3)
				(2) edge (4)
				(3) edge (4)
				(2) edge (5)
				(5) edge (3)
				(5) edge (4)
				(6) edge (2)
				(3) edge (6)
				(2) edge (7)
				(3) edge (7)
				(4) edge (7)
				(8) edge (2)
				(8) edge (3)
				(8) edge (4);
				\end{tikzpicture}

		\caption{\label{ExampleSABC}  The directed split graph $G_o^2(A,B,C)$ corresponding to the adjacency matrix $S_o^2(A,B,C)$ in Example \ref{ExABC}.}
		\end{center}
	\end{figure}

\begin{remark}\label{A=0}
	If $A$ is a zero matrix, then $M^k(A,B,C)$ is always a zero matrix for any $m \times n$ matrices $B$ and $C$ and  $k\geq0$. Thus, by Remark~\ref{M=0}, $G_o^k(A,B,C)$ is semi-transitive in this case. 
\end{remark}

\begin{proposition}\label{ABCallLayered}
	If $A,B$ and $C$ are layered matrices over $\{-1,0,1\}$, then $G_o^k(A,B,C)$ is semi-transitive for any $k\geq 0$. 
\end{proposition}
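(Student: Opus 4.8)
The plan is to show that the class of layered matrices is closed under the $2$-dimensional morphism defining $M^k(A,B,C)$, and then to invoke Corollary~\ref{Mall0all1}. First I would record the elementary reformulation: a matrix over $\{-1,0,1\}$ with $q$ columns is layered if and only if each of its rows equals $0^q$, $1^q$ or $(-1)^q$. In particular, since $A$, $B$ and $C$ are layered $m\times n$ matrices, every row of each of them is one of $0^n$, $1^n$, $(-1)^n$.

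Next I would prove by induction on $k$ that $M^k(A,B,C)$ is a layered matrix. The base case $k=0$ is immediate, as $M^0(A,B,C)=[0]$. For the inductive step, assume $M^k(A,B,C)=[x_{ij}]$ is layered, so that $x_{ij}=x_i$ depends only on the row index $i$. Writing $D_0=A$, $D_1=B$, $D_{-1}=C$, the matrix $M^{k+1}(A,B,C)$ is obtained by replacing each entry $x_{ij}$ with the $m\times n$ block $D_{x_{ij}}=D_{x_i}$, which depends only on $i$. Hence the block-row of $M^{k+1}(A,B,C)$ arising from row $i$ of $M^k(A,B,C)$ is a horizontal concatenation of several copies of the single block $D_{x_i}$. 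Consequently, each ordinary row of $M^{k+1}(A,B,C)$ is a concatenation of copies of one fixed row of $D_{x_i}$; and since $D_{x_i}\in\{A,B,C\}$ is layered, that fixed row is constant, so the whole row of $M^{k+1}(A,B,C)$ is constant. Thus $M^{k+1}(A,B,C)$ is layered, completing the induction.

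Finally, since $M^k(A,B,C)$ is a layered matrix for every $k\geq0$, Corollary~\ref{Mall0all1} applies directly and yields that $G_o^k(A,B,C)$ is semi-transitive, as claimed.

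I do not expect a serious obstacle here: the only slightly delicate point is the bookkeeping in the inductive step, namely checking that ``constant along each row of the small blocks $A,B,C$'' really forces ``constant along each row of the large matrix $M^{k+1}(A,B,C)$'', and this is exactly where the hypothesis that \emph{all three} of $A,B,C$ (not just $A$) are layered gets used. One could alternatively begin the induction at $k=1$ with $M^1(A,B,C)=A$, but starting at $k=0$ is cleaner and simultaneously subsumes the degenerate situation already noted in Remark~\ref{A=0}.
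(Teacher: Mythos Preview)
Your proof is correct and follows the same approach as the paper: both arguments observe that $M^k(A,B,C)$ remains layered (equivalently, every row is $0^{n^k}$, $1^{n^k}$ or $(-1)^{n^k}$) and then invoke Corollary~\ref{Mall0all1}. The only difference is cosmetic --- you spell out the induction on $k$ explicitly, whereas the paper asserts the closure under the morphism in one line.
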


\begin{proof} Let $A$, $B$ and $C$ be $m \times n$ matrices. Since every row in $A, B$ and $C$ is either $0^n$ or  $1^n$ or $(-1)^n$, we have that every row in $M^k(A,B,C)$ is either $0^{n^k}$ or  $1^{n^k}$ or $(-1)^{n^k}$, so by Corollary~\ref{Mall0all1}, $G_o^k(A,B,C)$ is semi-transitive. \end{proof}

If $A=[a_{ij}]_{m \times n}$ contains at least one $0$, say $a_{ij}=0$, then the entry in row $i$ and column $j$ of $M^1(A,B,C)$ is 0. By mapping this $0$ to $A$ in the next iteration of morphism, we obtain $A=M^1(A,B,C)$ as the $m \times n$ submatrix of $M^2(A,B,C)$ given by intersection of rows $(i-1)n+1,(i-1)n+2,\ldots,in$ and columns $(j-1)m+1,(j-1)m+2,\ldots,jm$. More generally, the $m^{k-1} \times n^{k-1}$ submatrix of $M^k(A,B,C)$ given by intersection of rows $(i-1)n^{k-1}+1,(i-1)n^{k-1}+2,\ldots,in^{k-1}$ and columns $(j-1)m^{k-1}+1,(j-1)m^{k-1}+2,\ldots,jm^{k-1}$ is $M^{k-1}(A,B,C)$. So, we can consider the bottommost, then leftmost zero in $A$ as the start of a chain of induced subgraphs generated by the morphism. Thus, the limit $\lim_{k \to \infty}M^k(A,B,C)$, called a {\em fixed point of the morphism}, is well-defined. So, we have that $G_o^i(A,B,C)$ is an induced subgraph of $G_o^k(A,B,C)$ for $i \leq k$, and the notion of the infinite split graph $G_o(A,B,C)$ is well-defined in the case when $A$ has a 0. Note that this is not a necessary condition for $G_o(A,B,C)$ to be well-defined (for example, $A, B, C$ could be all one matrices). We are interested in the smallest integer $\ell$ (possibly non-existing) such that $G_o^{\ell}(A,B,C)$ is not semi-transitive for given $A, B$ and $C$ (then $G_o^i(A,B)$ is not semi-transitive for $i\geq \ell$).

\begin{definition}\label{IWR-def} Let $A,B,C$ be $m \times n$ matrices such that $A$ has a $0$ as an entry. The {\em index of semi-transitivity} $\IST(A,B,C)$ of an infinite directed split  graph $G_o(A,B,C)$ is the smallest integer $\ell$ such that $G_o^{\ell}(A,B,C)$ is not semi-transitive. If such an $\ell$ does not exist, that is, if $G_o^{\ell}(A,B,C)$ is semi-transitive for all $\ell$, then $\ell:=\infty$. 
\end{definition}

Note that since $G_o^0(A,B,C)$ is a graph with one vertex for any $A,B,C$, we have $\IST(A,B,C) \geq 1$.  

\begin{remark}\label{}
	It follows from Proposition~\ref{ABCallLayered} that  $\IST(A,B,C) = \infty$ if $A,B$ and $C$ are layered matrices. 
\end{remark}

The following three lemmas give sufficient conditions for $A, B$ and $C$ to have $\IST(A,B,C)= \infty$.

\begin{lemma} \label{Lem-1res}
		Let $A, B$ and $C$ be $m \times n$ matrices over $\{-1,0,1\}$ such that $A$ has a $0$ and $\IST(A,B,C)= \infty$. Then, 
	\begin{itemize}
		\item If $A$ is not a layered matrix, then there is no row in $M^k(A,B,C)$ containing two $0$'s for any $k\geq 0$.
		\item If $B$ is not a layered matrix, then there is no row in $M^k(A,B,C)$ containing two $1$'s for any $k\geq 0$.
		\item If $C$ is not a layered matrix, then there is no row in $M^k(A,B,C)$ containing two $(-1)$'s for any $k\geq 0$.
	\end{itemize}
\end{lemma}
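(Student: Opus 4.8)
The plan is to prove the three bullets together by a uniform contrapositive argument: assume one of the forbidden configurations occurs in some $M^k(A,B,C)$ and deduce that $G_o^\ell(A,B,C)$ is not semi-transitive for some $\ell$, contradicting $\IST(A,B,C)=\infty$. I focus on the first bullet; the other two are symmetric under the substitutions $0\leftrightarrow 1$ (replacing $A$ by $B$, applied inside a copy of $B$) and $0\leftrightarrow -1$ (replacing $A$ by $C$), the key point being that $A$ having a $0$ guarantees that a full copy of $A$ — and hence of $B$ and of $C$ — appears as a submatrix of $M^k(A,B,C)$ for $k$ large enough, as explained in the paragraph preceding Definition~\ref{IWR-def}.

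First I would set up the substructure. Since $A$ is not layered, $A$ has a row containing two distinct values; together with the hypothesis that $A$ has a $0$, I would argue (using Corollary~\ref{CorMainTheorem=>} applied to $M^k(A,B,C)$ for the relevant $k$, which must have all rows of the form $0^r1^s0^t$, $0^r(-1)^s0^t$, or $1^r0^s(-1)^t$ if $G_o^k$ is to be semi-transitive) that in fact $A$ itself must already have every row of one of those three forms, and not be layered, so $A$ has a row of type $0^r1^s0^t$ or $0^r(-1)^s0^t$ with $s\geq 1$ and at least one of $r,t\geq 1$, or a row of type $1^r0^s(-1)^t$ with $r,t\geq 1$ and $s\geq 1$. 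In the first two cases that row contains a $\pm 1$ adjacent to a $0$; in the third case the row contains a $0$ flanked by a $1$ on the left and a $-1$ on the right. The upshot I want is: because $A$ is not layered, there is a row of $A$ in which a $0$ sits next to a nonzero entry (or between a $1$ and a $-1$), and also a row of $A$ — namely any row witnessing a $0$ — which can be used to regrow $A$.

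Next, the core step: suppose for contradiction that some row of $M^k(A,B,C)$ contains two $0$'s, say in columns $c_1<c_2$. Consider the iteration $M^{k+1}(A,B,C)$, where each of these two $0$'s is replaced by a copy of $A$. Inside the $A$-block sitting on the first $0$, I pick the row (local row) that is not layered — containing a nonzero entry $\varepsilon\in\{1,-1\}$ in some local column; inside the $A$-block sitting on the second $0$, I again pick a non-layered row, containing a nonzero entry in some local column. Because the original two $0$'s were in the same global row, suitable choices of local rows inside the two blocks that lie in the same strip give a single row of $M^{k+1}(A,B,C)$ which contains nonzero entries inside both blocks with a genuine gap between them; one then inspects the possible forms. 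If the two nonzero entries have opposite signs with the $-1$ to the left of the $1$, Remark~\ref{remark-no0not1(-1)} immediately gives a cycle. Otherwise the row is forced, by Corollary~\ref{CorMainTheorem=>} applied to $M^{k+1}$, to be of the form $1^r0^s(-1)^t$, which pins down both nonzero entries precisely; then a second non-layered row of $A$, placed in a different $A$-block on the same global row (there are two blocks to play with, and if needed one can pass to $M^{k+2}$ to manufacture a third copy of $A$ on the same row via another $0$), produces a row violating condition (ii) of Theorem~\ref{MainTheorem} — two rows whose nonzero entries straddle the critical positions $a$ and $a+b+1$ of a $1^a0^b(-1)^c$ row. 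Either way $G_o^{k+1}(A,B,C)$ (or $G_o^{k+2}$) is not semi-transitive, contradicting $\IST(A,B,C)=\infty$.

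The main obstacle I anticipate is the bookkeeping in the core step: making sure that, after one more iteration, the two (or three) non-layered rows of the various $A$-blocks can be aligned to lie in a \emph{single} row of $M^{k+1}(A,B,C)$ and that the positions of their nonzero entries really do conflict with one of the forbidden patterns in Theorem~\ref{MainTheorem}/Remark~\ref{remark-no0not1(-1)} — i.e. checking that no legal $1^r0^s(-1)^t$ shape can simultaneously accommodate all the forced nonzero entries. Handling the boundary subcase where $A$'s only non-layered rows are of the form $1^r0^s(-1)^t$ themselves (so the regrown row already looks like $1\cdots0\cdots(-1)$) needs slight care: there one uses that the \emph{same} non-layered row reappears in two disjoint blocks on one global line, and a $1^a0^b(-1)^c$ row together with another row sharing nonzero entries at positions $a$ and $a+b+1$ is exactly the configuration forbidden by condition (ii). Once the alignment lemma is nailed down, the rest is a finite case check.
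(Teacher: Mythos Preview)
Your overall strategy---argue by contrapositive, pass from $M^k$ to $M^{k+1}$, and exploit that the two $0$'s in a single row of $M^k$ blow up into two side-by-side copies of $A$---is exactly the paper's. But you miss the one observation that makes the core step a one-liner, and then compensate with a case analysis that is both unnecessary and, as written, not quite right.

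The paper's argument is simply this. Pick one row $i$ of $A$ that is not constant, say $a_{ir}\neq a_{is}$ with $r<s$. If row $a$ of $M^k$ has $0$'s in columns $b<c$, then in $M^{k+1}$ the entries in the single row $(a-1)m+i$ at the four columns $(b-1)n+r,\ (b-1)n+s,\ (c-1)n+r,\ (c-1)n+s$ are $a_{ir},a_{is},a_{ir},a_{is}$. Thus that row of $M^{k+1}$ contains the subsequence $xyxy$ with $x\neq y$. But in each of the admissible shapes $0^r1^s0^t$, $0^r(-1)^s0^t$, $1^r0^s(-1)^t$ every value occurs as a single contiguous block, so no such $xyxy$ subsequence is possible. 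Corollary~\ref{CorMainTheorem=>} then says $G_o^{k+1}(A,B,C)$ is not semi-transitive; done. The second and third bullets are literally the same with $B$ (resp.\ $C$) replacing $A$ and the two $1$'s (resp.\ $(-1)$'s) replacing the two $0$'s.

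Two places where your write-up goes off course. First, the ``alignment'' obstacle you anticipate is not real: since the two $0$'s lie in one row of $M^k$, the two $A$-blocks in $M^{k+1}$ occupy the \emph{same} band of $m$ global rows, so using the same local row index $i$ in both blocks is automatic. Second, your fallback to condition~(ii) of Theorem~\ref{MainTheorem} is muddled: condition~(ii) is a constraint on a \emph{pair} of rows of the matrix, yet your construction is placing entries ``on the same global row''; and Theorem~\ref{MainTheorem} also requires every row of the matrix to contain a $0$, which you have not arranged for $M^{k+1}$. By extracting only one nonzero entry per block you throw away exactly the information that finishes the proof; taking \emph{two} positions $r,s$ per block yields the repeated pattern $a_{ir}a_{is}a_{ir}a_{is}$ and ends the argument with no sign cases, no appeal to condition~(ii), and no need to pass to $M^{k+2}$.
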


\begin{proof}
We will prove the first bullet point; the other bullet points can be proved analogously. 

Let $A=[a_{ij}]$ be an $m \times n$ matrix and $a_{ir},a_{is}$ be two entries in row $i$ of $A$ such that $a_{ir} \neq a_{is}$ where $1\leq r<s \leq n$. Denote $\mu^k(i,j) \in \{-1,0,1\}$ the entry of $M^k(A,B,C)$ in row $i$ and column $j$. Suppose that  row $a$ of $M^k(A,B,C)$ contains at least two $0$'s for some $k$, say $\mu^k(a,b)=\mu^k(a,c)=0$ where $b<c$. Consider  the intersection of rows $(a-1)m+1,(a-1)m+2,\ldots,am$ and columns $(b-1)n+1,(b-1)n+2, \ldots, bn$ in $M^{k+1}(A,B,C)$, which is the matrix $A$ because $\mu^k(a,b) =0$. Similarly, the submatrix of $M^{k+1}(A,B,C)$ formed by rows $(a-1)m+1,(a-1)m+2,\ldots,am$ and columns $(c-1)n+1,(c-1)n+2, \ldots, cn$ is $A$. Hence, we have
	$$\mu^{k+1}((a-1)m+i,(b-1)n+r) = \mu^{k+1}((a-1)m+i,(c-1)n+r) = a_{ir}$$ and $$\mu^{k+1}((a-1)mi,(b-1)n+s) = \mu^{k+1}((a-1)m+i,(c-1)n+s) = a_{is}.$$ 
Thus, the submatrix of $M^{k+1}(A,B,C)$ formed by row $(a-1)m+i$ and columns $(b-1)n+r,(b-1)n+s,(c-1)n+r,(c-1)n+s$ is $\begin{bmatrix}
a_{ir},a_{is},a_{ir},a_{is}
\end{bmatrix}$. That is, row $(a-1)m+i$ of $M^{k+1}(A,B,C)$ cannot be of the form $ 0^r  1^s  0^t $ or $ 0^r  (-1)^s  0^t $ or $ 1^r  0^s (-1)^t $. By Corollary~\ref{CorMainTheorem=>}, $G_o^{k+1}(A,B,C)$ is not semi-transitive, which is a contradiction with $\IST(A,B,C) = \infty$.
\end{proof} 

\begin{lemma}\label{Lem-2res}
	Let $A, B$ and $C$ be $m \times n$  matrices over $\{-1,0,1\}$ such that $A$ has a $0$ and $\IST(A,B,C)= \infty$. Then, 
	\begin{itemize}
		\item If $A$ and $B$ are not layered matrices, then every entry of $C$ is $(-1)$.
		\item If $A$ and $C$ are not layered matrices, then every entry of $B$ is $1$.
	\end{itemize}
\end{lemma}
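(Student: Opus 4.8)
The plan is to prove the two bullet points symmetrically, so I will focus on the first one: assuming $A$ has a $0$, $\IST(A,B,C)=\infty$, and both $A$ and $B$ are not layered, show every entry of $C$ equals $-1$. First I would invoke Lemma~\ref{Lem-1res}: since $A$ is not layered, no row of any $M^k(A,B,C)$ contains two $0$'s, and since $B$ is not layered, no row of any $M^k(A,B,C)$ contains two $1$'s. Combining these with Corollary~\ref{CorMainTheorem=>} (every row of $M^k$ must be $0^r1^s0^t$, $0^r(-1)^s0^t$, or $1^r0^s(-1)^t$), the only row shapes compatible with ``at most one $0$ and at most one $1$'' are: $1^{a}0(-1)^{c}$ with $a+c = n^k-1$ (a single $0$, a single block of $1$'s, a block of $-1$'s), or $0(-1)^{c}$ / $(-1)^{c}0$ (a single $0$ and the rest $-1$'s), or $(-1)^{n^k}$ (all $-1$'s), or the degenerate cases where $a=0$ or $c=0$ inside the first shape. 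In every case, aside from at most one $0$ and at most one maximal initial run that could be $1$'s, \emph{all remaining entries of every row of $M^k(A,B,C)$ are $-1$}.

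Next I would locate a copy of $C$ inside some $M^{k+1}(A,B,C)$ and read off its entries. Since $A$ is not layered and has a $0$, consider the bottommost-leftmost $0$ of $A$, say $a_{ij}=0$; then $M^1=A$ has an entry $1$ somewhere (because if $A$ had no $1$ at all, we still need a $1$ to appear — here I should be careful): more robustly, because $B$ is not layered, $B$ has both a $1$ and a non-$1$ entry, hence in particular $B$ has entries; I want a position in some $M^k(A,B,C)$ equal to $1$ whose ambient row and column are ``deep enough''. Actually the cleanest route: $A$ has a $0$, so $M^k(A,B,C)$ contains a copy of $A$ for every $k\geq 1$ (by the nesting argument in the paragraph preceding Definition~\ref{IWR-def}); since $B$ is not layered it has a non-layered row, but what I really need is simply that \emph{some} entry of $M^k(A,B,C)$ equals $1$ for $k$ large — this holds because $A$ is not layered so $A$ has a row with two distinct entries; if that row contains a $1$ we are done at $k=1$, and if not, that row contains a $0$ and a $-1$, so $M^2$ contains a copy of $A$ (from the $0$) and a copy of $C$ (from the $-1$); iterating, a $1$ must eventually surface unless $A$, and all matrices reachable, only ever produce $0$'s and $-1$'s — but then $C$ is free of the constraint and we must argue differently. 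To avoid this tangle I would instead argue: fix any position $(p,q)$ of $M^k(A,B,C)$ with $k\geq 1$ such that the entry there is $1$; such a position exists for some $k$ unless $1$ never appears, a case I treat separately. Then in $M^{k+1}(A,B,C)$ the block at rows $(p-1)m+1,\dots,pm$ and columns $(q-1)n+1,\dots,qn$ equals $B$, and the block at rows $(p-1)m+1,\dots,pm$, columns $(q'-1)n+1,\dots,q'n$ equals $C$ whenever the entry at $(p,q')$ of $M^k$ is $-1$. Choosing $p$ so that its row of $M^k$ has shape $1^a0^b(-1)^c$ with $c>0$ (which it does, since it contains a $1$, hence by the shape classification above it is $1^a0(-1)^c$ or $1^{n^k}$; if it is all $1$'s then that row has two $1$'s when $n^k\geq 2$, contradicting Lemma~\ref{Lem-1res}, so $c>0$), we get both a copy of $B$ and a copy of $C$ sitting in the \emph{same} band of $m$ rows of $M^{k+1}$. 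Now apply the first part of the argument to $M^{k+1}$: for each of these $m$ rows, outside of at most one $0$ and at most one $1$, all entries are $-1$. Inside the $B$-block a whole row of $B$ appears; since $B$ is not layered, $B$ has a row with two distinct entries, so that row of $B$ placed inside $M^{k+1}$ already consumes both the ``free $1$'' and (if the distinct entries are $1$ and $0$) the ``free $0$'', or more simply: the $B$-block contains at least two distinct values in some row, hence at least one value $\neq -1$ sitting inside the long $-1$-tail, forcing the $0$-or-$1$ slack to be spent there. Consequently the $C$-block, lying in the same rows but different columns, can contain \emph{no} entry other than $-1$ in those rows — and by choosing different starting positions $(p,q)$, or by noting the $C$-block occupies $m$ consecutive columns while the slack is a single column, every row of $C$ is forced to be all $-1$'s. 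That is the conclusion.

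The main obstacle, as the above makes clear, is the bookkeeping of \emph{where} the single permitted $0$ and single permitted $1$ in each row of $M^{k+1}(A,B,C)$ can sit, and ensuring that a copy of $B$ with a genuinely non-constant row ``uses up'' that slack so that the simultaneously-present copy of $C$ is squeezed into the all-$(-1)$ region. I would handle this by a careful but elementary case analysis on the shape $1^a0^b(-1)^c$ of the host row of $M^k$, using that $b\leq 1$ and $a\leq 1$ would be too strong — rather that the row has at most one $0$-\emph{block} and at most one $1$-\emph{block} of total length at most... no: precisely, by Lemma~\ref{Lem-1res} the row has at most one $0$ and at most one $1$ total. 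So when $B$ (of size $m\times n$, $n\geq 2$) is substituted into a single entry, it contributes $n\geq 2$ entries to that row; since $B$ is not layered some row of $B$ is non-constant, and a non-constant row of length $\geq 2$ cannot fit into ``at most one $1$, at most one $0$, rest $-1$'' unless it is exactly something like $10(-1)\cdots$ — and crucially it is a \emph{full} row of $B$, so the non-constancy of $B$ pins it down. I expect to need the edge case $1$ never appearing anywhere in the fixed point: then $M^k(A,B,C)$ is over $\{0,-1\}$ for all $k$, the copies of $B$ that appear would be forced to be over $\{0,-1\}$ too, but $B$ is substituted only at $1$-entries which never occur, so $B$ is irrelevant and the statement ``every entry of $C$ is $-1$'' could fail; however this case cannot arise, because if $A$ is not layered it has a non-constant row, and if that row avoids $1$ it is $\{0,-1\}$-valued with both present, substituting $0\mapsto A$ and $-1\mapsto C$; unless $C$ is also $1$-free and $A$ is $1$-free we eventually produce a $1$, and if both $A,C$ are $1$-free then again $C$ could be arbitrary over $\{0,-1\}$ — so the honest statement is that the lemma's conclusion should be read with the understanding that $B$'s substitution sites are non-empty, which they are precisely when a $1$ appears in the fixed point; I would either add this as a one-line observation or verify from the paper's running conventions ($n\geq 2$, $A$ has a $0$, the morphism is iterated from $[0]$) that a $1$ does surface. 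In the write-up I would state the clean chain: Lemma~\ref{Lem-1res} $\Rightarrow$ rows of $M^{k}$ have $\leq 1$ zero and $\leq 1$ one $\Rightarrow$ (via Corollary~\ref{CorMainTheorem=>}) explicit row shapes $\Rightarrow$ simultaneous $B$- and $C$-blocks in one row-band of $M^{k+1}$ $\Rightarrow$ non-constant row of $B$ exhausts the slack $\Rightarrow$ $C$'s rows are all $-1$, and then remark that the second bullet follows by the symmetry exchanging the roles of $1\leftrightarrow -1$, $B\leftrightarrow C$.
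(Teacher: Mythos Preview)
Your starting point—invoking Lemma~\ref{Lem-1res} to force every row of every $M^k(A,B,C)$ to contain at most one $0$ and at most one $1$—is exactly right. But from there you take a detour that creates real gaps. You try to locate a copy of $B$ and a copy of $C$ in the same row band of some $M^{k+1}(A,B,C)$ and argue that a non-constant row of $B$ ``uses up'' the permitted $0$ and $1$, squeezing the corresponding row of $C$ into the all-$(-1)$ region. Two problems: (i) this only constrains row $i$ of $C$ when row $i$ of $B$ happens to be non-constant, and your extension to the remaining rows (``the $C$-block occupies $m$ consecutive columns while the slack is a single column'') is garbled and does not go through as written; (ii) the whole maneuver needs a $1$ to appear in the fixed point so that $B$ is ever substituted, and you never resolve the case where no $1$ appears—indeed you end up suggesting the lemma might require an extra hypothesis there. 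It does not.

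The paper's route avoids all of this by looking at $(-1)$'s rather than $1$'s. Once each row of $M^k(A,B,C)$ has at most one $0$ and one $1$, every row of $M^k(A,B,C)$ for $k\geq 2$ has at least $n^k-2\geq 2$ entries equal to $-1$. Two things follow immediately. First, by the contrapositive of the third bullet of Lemma~\ref{Lem-1res}, $C$ must be \emph{layered}: every row of $C$ is constant. Second, a $-1$ certainly occurs in some $M^k(A,B,C)$ (any $k\geq 2$ will do), so $C$ embeds as a block of $M^{k+1}(A,B,C)$; a constant row $c^n$ of $C$ then sits inside a row of $M^{k+1}(A,B,C)$ having at most one $0$ and one $1$, and since $n\geq 2$ this forces $c=-1$. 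No $B$-block is needed, no row-shape taxonomy, and no worry about whether $1$'s ever surface: your edge case evaporates because the argument never required a $1$ to appear—only a $-1$, and those are guaranteed by pigeonhole.
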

\begin{proof}
	Both statements are proved by similar arguments, so we will prove here only the first one. Suppose both $A$ and $B$ are not layered matrices. By Lemma~\ref{Lem-1res}, every row of $M^k(A,B,C)$ contains at most one $0$ and at most one $1$ for any $k \geq 2$. Then, there are at least $n^k-2$ copies of $(-1)$ in every row of $M^k(A,B,C)$. By Lemma~\ref{Lem-1res}, $C$ is a layered matrix.

	Suppose that there is no $(-1)$ in $A$ and $B$. Since every row of $M^1(A,B,C) = A$ has at most one $0$ and at most one $1$ and no $(-1)$, then $n=2$ (recall our assumption of $n\geq 2$). Therefore, $M^2(A,B,C)$ has $4$ columns with every row having more than one $0$ or more than one $1$, which is a contradiction.
	
	If $(-1)$ is an entry of $A$, then $M^1(A,B,C)=A$ has $(-1)$ as an entry. So $C$ is a submatrix of $M^2(A,B,C)$ as $(-1)$ is mapped to $C$. Since every row of $C$ has the same entries, and there is no more than one $0$ and one $1$ in each row of $M^2(A,B,C)$, we have that each entry of $C$ must be $(-1)$.
	
	Finally, if there is no $(-1)$ in $A$, but $B$ contains $(-1)$ as an entry, then $M^1(A,B,C)=A$ contains $1$ as an entry. Since $1$ maps to $B$, $M^2(A,B,C)$ contains $B$ as a submatrix. So there is an entry $(-1)$ in $M^2(A,B,C)$, and then $C$ is a submatrix of $M^3(A,B,C)$. Since every row of $C$ has entries equal to each other, and there is no more than one $0$ and one $1$ in each row of $M^2(A,B,C)$, then each entry of $C$ is $(-1)$.
\end{proof}

\begin{lemma}\label{Lem-2resA}
	Let $A, B$ and $C$ be $m \times n$  matrices over $\{-1,0,1\}$ such that $A$ has a $0$ and $\IST(A,B,C)= \infty$. If $B$ and $C$ are not layered matrices, then all entries of $A$ are $0$.
\end{lemma}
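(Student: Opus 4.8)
The plan is to argue by contradiction: assume $A$ has a nonzero entry, and derive a contradiction with the hypothesis $\IST(A,B,C)=\infty$. The first step I would carry out is to show that $A$ must in fact be a layered matrix. Suppose not. Then $A$ and $B$ are both non-layered, so the first bullet of Lemma~\ref{Lem-2res} (whose hypotheses ``$A$ has a $0$'' and ``$\IST(A,B,C)=\infty$'' are exactly our standing assumptions) forces every entry of $C$ to equal $-1$; in particular $C$ is layered, contradicting the hypothesis that $C$ is not layered. Hence $A$ is layered, so every row of $A$ is constant. Since $A$ contains a $0$ and (by assumption) a nonzero entry, $A$ has some row that is identically $0$ and some row $p$ that is identically $v$ for a fixed $v\in\{1,-1\}$. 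Set $D:=B$ if $v=1$ and $D:=C$ if $v=-1$; note $D$ is one of $B,C$, hence not layered, hence has a nonzero entry.

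The second, and essentially only substantive, step is to track the morphism one more iteration. Since $M^1(A,B,C)=A$ and row $p$ of $A$ consists entirely of $v$'s, every entry of that row is mapped to $D$; consequently, for each $i'\in\{1,\dots,m\}$, the row $(p-1)m+i'$ of $M^2(A,B,C)$ is exactly $n$ side-by-side copies of the $i'$-th row of $D$. Because $D$ has a nonzero entry $w\in\{1,-1\}$ sitting in some row $i'$, the row $(p-1)m+i'$ of $M^2(A,B,C)$ contains at least $n\ge 2$ occurrences of $w$. If $w=1$ this contradicts the second bullet of Lemma~\ref{Lem-1res} (applicable since $B$ is not layered), and if $w=-1$ it contradicts the third bullet (applicable since $C$ is not layered). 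Either way we contradict $\IST(A,B,C)=\infty$, and the proof is complete.

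I do not expect a genuine obstacle here: once Lemmas~\ref{Lem-1res} and \ref{Lem-2res} are in hand, the argument is short and computation-free. The one place requiring care is the order of operations: one should first use Lemma~\ref{Lem-2res} to reduce to the case where $A$ is layered, rather than trying to chase a single stray nonzero entry of $A$ through the iterates directly, since a lone nonzero entry of $A$ need not turn an entire block row of $M^2(A,B,C)$ into a copy of $B$ or $C$, whereas a constant nonzero \emph{row} of $A$ does — and it is precisely this $n$-fold repetition, flatly incompatible with the ``at most one $1$ and at most one $-1$ per row'' constraints of Lemma~\ref{Lem-1res}, that drives the contradiction.
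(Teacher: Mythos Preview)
Your proof is correct and follows the same two-step skeleton as the paper's: first establish that $A$ must be layered, then use a nonzero constant row of $A$ to force a repeated block in $M^2(A,B,C)$ that violates the ``at most one $1$/at most one $-1$ per row'' constraints of Lemma~\ref{Lem-1res}. The only noteworthy difference is in the first step: you invoke Lemma~\ref{Lem-2res} (if $A$ were non-layered then $C$ would be all $(-1)$'s, hence layered, a contradiction), whereas the paper re-derives layeredness of $A$ directly from Lemma~\ref{Lem-1res} by counting---at most one $1$ and one $-1$ per row forces at least $n^k-2\geq 2$ zeros, contradicting the first bullet of Lemma~\ref{Lem-1res} if $A$ is non-layered. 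Your shortcut via Lemma~\ref{Lem-2res} is slightly cleaner since that lemma has already packaged the relevant reasoning.
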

\begin{proof}
	Suppose $B$ and $C$ are not layered matrices. By Lemma~\ref{Lem-1res}, every row of $M^k(A,B,C)$ contains at most one $1$ and at most one $(-1)$ for any $k \geq 2$. Then there are at least $n^k-2$ zeroes in every row of $M^k(A,B,C)$. By Lemma~\ref{Lem-1res}, $A$ is a layered matrix.
	
	Assume that there is a row $r$ in $A:=[a_{ij}]=M^1(A,B,C)$ of the form $11 \cdots 1$. Also, suppose that a row $s$ in $B:=[b_{ij}]$ has two distinct entries, say $b_{sp} \neq b_{sq}$ for some $1 \leq p<q \leq n$. Note that the intersection of rows $(r-1)m+1, (r-1)m+2, \ldots, rm$ and columns $(\ell-1)n+1, (\ell-1)n+2, \ldots, \ell n$ in $M^2(A,B,C)$ is $B$ for $\ell=1,2,\ldots,m$. Then the submatrix of $M^2(A,B,C)$ formed by row $(r-1)m+s$ and columns $p, q, n+p, n+q, 2n+p, 2n+q, \ldots, (m-1)n+p, (m-1)n+q$ is $$\begin{bmatrix}
	b_{sp} & b_{sq} & b_{sp} &b_{sq}  & \cdots& b_{sp} & b_{sq}
	\end{bmatrix}.$$
	Since every row of $M^k(A,B,C)$ has at most one $1$ and at most one $(-1)$ for any $k$, we have $b_{sp} = b_{sq} = 0$, which is a contradiction. Thus, there is no row in $A$ of the form $11 \cdots 1$. Similarly, we can show that there is no row in $A$ of the form $(-1)(-1) \cdots (-1)$. Hence, $A$ is an all $0$ matrix.
\end{proof}

From Lemmas~\ref{Lem-2res} and~\ref{Lem-2resA} we have the following theorem.
\begin{theorem}\label{Thm-ABC-not-layered}
	Let $A, B$ and $C$ be $m \times n$  matrices over $\{-1,0,1\}$ such that $A$ has a $0$. If $A$, $B$ and $C$ are not layered, then $\IST(A,B,C)$ is finite. 
\end{theorem}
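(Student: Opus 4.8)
The plan is to derive a contradiction from the assumption that $A$, $B$, $C$ are all non-layered while $\IST(A,B,C)=\infty$, and the natural route is to simply combine the three structural lemmas that precede the statement. First I would apply Lemma~\ref{Lem-2resA}: since $B$ and $C$ are non-layered and $A$ has a $0$, the assumption $\IST(A,B,C)=\infty$ forces every entry of $A$ to be $0$, i.e.\ $A$ is the zero matrix. But the zero matrix is trivially a layered matrix (all entries in each row are identical), contradicting the hypothesis that $A$ is non-layered. Hence $\IST(A,B,C)$ cannot be $\infty$, so it is finite.

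Alternatively, and perhaps more in the spirit of the ``Lemmas \ref{Lem-2res} and \ref{Lem-2resA}'' attribution in the paper, one can phrase it symmetrically: if $A$, $B$, $C$ are all non-layered and $\IST(A,B,C)=\infty$, then in particular $A$ and $B$ are non-layered, so by the first bullet of Lemma~\ref{Lem-2res} every entry of $C$ is $-1$, making $C$ layered — again a contradiction. Either branch closes the argument in one line; I would present the $A$-is-zero version since it is the cleanest and only invokes a single lemma, but note that any of the three ``pair of non-layered matrices'' lemmas suffices.

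The argument has essentially no obstacle: the heavy lifting (the induced-subgraph stabilization via the bottommost-leftmost $0$ of $A$, the propagation of a forbidden $\ast\,0\,\ast$ or repeated-entry pattern into a row of $M^{k+1}$ that violates Corollary~\ref{CorMainTheorem=>}) has already been done inside Lemmas~\ref{Lem-1res}, \ref{Lem-2res} and \ref{Lem-2resA}. The only thing to be careful about is the bookkeeping that a matrix all of whose entries are a single value $x\in\{-1,0,1\}$ is, by Definition, a layered matrix, so that the conclusions ``every entry of $A$ is $0$'', ``every entry of $B$ is $1$'', ``every entry of $C$ is $-1$'' each directly contradict non-layeredness. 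If I wanted to be maximally explicit I would also record the contrapositive reading used in the classification diagram: $\IST(A,B,C)=\infty$ together with ``$A$ has a $0$'' implies at least one of $A,B,C$ is layered — which is exactly what Theorem~\ref{Thm-ABC-not-layered} states.

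So the proof I would write is: assume for contradiction that $A,B,C$ are all non-layered and $\IST(A,B,C)=\infty$; apply Lemma~\ref{Lem-2resA} to the non-layered pair $B,C$ to conclude $A$ is the all-$0$ matrix; observe this makes $A$ layered, contradicting the hypothesis; therefore $\IST(A,B,C)$ is finite.
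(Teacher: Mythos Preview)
Your proof is correct and is exactly the intended argument: the paper merely states that the theorem follows from Lemmas~\ref{Lem-2res} and~\ref{Lem-2resA}, and your contrapositive deduction (non-layered $B,C$ plus $\IST=\infty$ forces $A$ to be the zero matrix, hence layered) is precisely how one cashes this out. Either of your two routes suffices, and your observation that a constant matrix is layered is the only detail needed to close the contradiction.
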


\begin{definition} \label{DefIndependent}
	Let $A,B,C$ be $m \times n$ matrices over $\{-1,0,1\}$. The triple $(A,B,C)$ is said to be independent from $B$ if there are no $1$'s in $A$ and $C$. Similarly, the triple $(A,B,C)$ is said to be independent from $C$ if there are no $(-1)$'s in $A$ and $B$. 
\end{definition}

For convenience, we write $R(M)$ for the set of strings representing rows of $M$.  Moreover, if $A, B$ and $C$ are $m \times n$ matrices over $\{-1,0,1\}$, then define $R^k(A,B,C)$ to be the set of strings representing rows of $M^k(A,B,C)$. So, every element of $R^k(A,B,C)$ is a string over $\{-1,0,1\}$ of length $n^k$. Each element of $R^k(A,B,C)$ is called a {\em row pattern of $M^k(A,B,C)$}.

\begin{theorem} \label{thm-M(A,B,C)IndependentFromC}
	Let $A,B$ and $C$ be $m \times n$ matrices over $\{-1,0,1\}$ such that $A$ has a $0$ and $(A,B,C)$ is independent from $C$. Then, $\IST(A,B,C)= \infty$ if and only if $A$ and $B$ satisfy one of the following conditions, where $a_i \in \{0,1\}$:
	\begin{itemize}
		\item[$(1)$] $A$ and $B$ are layered matrices, or
		\item[$(2)$] $A= \begin{bmatrix}
		a_1    & 1      & 1      & \cdots & 1 \\
		a_2    & 1      & 1      & \cdots & 1 \\ 
		\vdots & \vdots & \vdots &  	  & \vdots \\
		a_m    & 1      & 1      & \cdots &1 \end{bmatrix}$ and 
		$B= \begin{bmatrix}
		1      &1       & \cdots & 1  \\
		1      &1       & \cdots & 1  \\
		\vdots & \vdots &        & \vdots  \\
		1      &1       & \cdots & 1  \end{bmatrix}$, or
		
		\item[$(3)$] $A= \begin{bmatrix}
		 1      & 1      & \cdots & 1      & a_1   \\
		 1      & 1      & \cdots & 1      & a_2   \\ 
		\vdots  & \vdots &        & \vdots & \vdots \\
		 1      & 1      & \cdots & 1      &a_m    \end{bmatrix}$ and 
		 $B= \begin{bmatrix}1      &1       & \cdots & 1  \\
		 1      &1       & \cdots & 1  \\
		 \vdots & \vdots &        & \vdots  \\
		 1      &1       & \cdots & 1 \end{bmatrix}$.
	\end{itemize}
\end{theorem}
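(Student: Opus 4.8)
The plan is to recast the statement as a combinatorial question about words over $\{0,1\}$. Since $(A,B,C)$ is independent from $C$, neither $A$ nor $B$ contains a $-1$, and because the morphism is iterated from $[0]$ no $-1$ is ever created; hence $C$ is never applied and each $M^k(A,B,C)$ is a $\{0,1\}$-matrix determined by $A$ and $B$ alone. For a $\{0,1\}$-matrix $M$ the shapes $1^r0^s(-1)^t$ with $t>0$ and $0^r(-1)^s0^t$ with $s>0$ are impossible, so Corollaries~\ref{CorMainTheorem=>} and~\ref{corr-MainTheorem} together say that $G_o(M)$ is semi-transitive if and only if in every row of $M$ the $1$'s are consecutive, i.e.\ every row has shape $0^r1^s0^t$; call such an $M$ \emph{good}. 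Thus $\IST(A,B,C)=\infty$ iff $M^k(A,B,C)$ is good for all $k\ge0$, and the theorem becomes a statement about the $2$-dimensional morphism $0\mapsto A$, $1\mapsto B$.

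For ``$\Leftarrow$'' I would check by induction on $k$ that each of (1)--(3) stays good. In (1) every row of $M^k$ is $0^{n^k}$ or $1^{n^k}$, because an all-$0$ (resp.\ all-$1$) row of $M^{k-1}$ expands row-by-row through the rows of $A$ (resp.\ $B$), all of which are $0^n$ or $1^n$. In (2) every row of $M^k$ is $0\,1^{\,n^k-1}$ or $1^{n^k}$: an all-$1$ row expands via $B$ to all-$1$ rows, and a row $0\,1^{\,n^k-1}$ expands in its $\sigma$-th slot to $A_\sigma B_\sigma^{\,n^k-1}=a_\sigma\,1^{\,n^{k+1}-1}$; (3) is the left--right mirror. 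Every word occurring has shape $0^r1^s0^t$, so $G_o^k(A,B,C)$ is semi-transitive by Corollary~\ref{corr-MainTheorem}.

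The substance is ``$\Rightarrow$''. Assume $\IST(A,B,C)=\infty$, so every $M^k$ is good; in particular $M^1=A$ is good, and we may assume $A\ne0$ (if $A=0$ then $M^k\equiv0$ for all $k$ and $\IST=\infty$ is immediate). \emph{Step 1: $B$ is layered.} Because $C$ is irrelevant we may take it non-layered; then if $B$ were non-layered, Lemma~\ref{Lem-2resA} would force $A=0$, a contradiction. So every row of $B$ is $0^n$ or $1^n$. \emph{Step 2.} If $A$ is also layered we are in case (1); so assume $A$ is not layered, whence $A$ has a row of shape $0^r1^s0^t$ with $s\ge1$, $r+t\ge1$. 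The core observation is that a block of $1$'s in $M^k$ that has $0$'s on \emph{both} sides is incompatible with goodness of $M^{k+1}$ unless the flanking $0$'s expand to $1$-free material, which is impossible once $A$ has a $1$; the block $1^{sn}$ produced by $B=1^n$ behaves similarly. Pushing this through, together with the analogous analysis of $B=0$ (which leads to $A=0$ after two more iterations, a contradiction), shows successively: $B$ has an all-$1$ row; every non-layered row of $A$ is one-sided, $0\,1^{n-1}$ or $1^{n-1}\,0$; $A$ cannot contain both a row $0\,1^{n-1}$ and a row $1^{n-1}\,0$, nor (if it has a one-sided row) a row $0^n$; and $B$ has no $0^n$ row. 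Assembling these yields exactly the shape of $A$ in (2) (every column of $A$ but the first is all $1$'s, and $B$ is the all-$1$'s matrix) or the mirror shape (3).

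I expect Step 2 to be the main obstacle: the informal ``$1$'s bleed in from both sides'' picture must be turned into a clean invariant, e.g.\ tracking, for each row of $M^k$, the position of its single block of $1$'s and the lengths of the two flanking blocks of $0$'s, and how these transform under one morphism step. The delicate points are (a) that, once a $1$ has appeared, no $1$-block's row ever has an interior $0$; (b) excluding rows of $A$ of shape $0^r1^s0^t$ with $r\ge2$, with $t\ge2$, or with $r,t\ge1$; and (c) excluding coexistence of a left-anchored and a right-anchored row of $A$. In each case the contradiction is obtained by producing, after boundedly many iterations, a row with two disjoint blocks of $1$'s, which violates Corollary~\ref{CorMainTheorem=>}.
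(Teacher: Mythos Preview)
Your reduction to $\{0,1\}$-matrices and the characterisation ``good $\Leftrightarrow$ every row is $0^r1^s0^t$'' is exactly right, and the ``$\Leftarrow$'' direction is fine. Your Step~1 trick (replacing $C$ by a non-layered matrix so that Lemma~\ref{Lem-2resA} forces $B$ layered) is correct and is genuinely different from the paper; the paper instead treats the layered-$A$ case directly by looking at $R^2(A,B,C)$ and observing that $(b_i)^n$ can only be good when $b_i\in\{0^n,1^n\}$.

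Where your plan diverges unnecessarily is Step~2. You anticipate a delicate block-tracking argument, but the paper dispatches this in a few lines by invoking Lemma~\ref{Lem-1res}: if $A$ is not layered and $\IST=\infty$, then \emph{no row of any $M^k$ contains two $0$'s}. Over $\{0,1\}$ this forces every row of $A=M^1$ to lie in $\{01^{n-1},\,1^{n-1}0,\,1^n\}$, which is precisely your ``delicate points'' (a) and (b) for free. Excluding the coexistence of $01^{n-1}$ and $1^{n-1}0$ (your point (c)) is then a one-line computation: expand the row $01^{n-1}$ to $AB^{n-1}$ and read off the row indexed by the $1^{n-1}0$-row of $A$, obtaining $1^{n-1}0\,b^{\,n-1}$; this must have at most one $0$, so $b=1^n$, but then the word is $1^{n-1}01^{n(n-1)}$, which is not of shape $0^r1^s0^t$. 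Finally, with $R(A)\subseteq\{01^{n-1},1^n\}$ (or the mirror), one more pass through $M^2$ forces every row of $B$ to be $1^n$.

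So your outline is sound, but you are making Step~2 much harder than it needs to be: the single invariant ``at most one $0$ per row'' (Lemma~\ref{Lem-1res}) replaces the block-position bookkeeping entirely. There is no genuine gap in your approach, only unnecessary labour; rewriting Step~2 around Lemma~\ref{Lem-1res} gives essentially the paper's short proof.
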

\begin{proof}
	``$\Leftarrow$'' There is no $(-1)$ in $A$ and $B$, and row patterns of $M^k(A,B,C)$ generated by $A, B$ and $C$ in (1), (2) and (3) are in the set $$\{1^{n^k}, 0^{n^k}, 01^{n^k-1}, 1^{n^k-1}0\}.$$ By Corollary~\ref{corr-MainTheorem}, $M^k(A,B,C)$ is semi-transitive for all $k\geq 0$. \\[-2mm]

\noindent
	``$\Rightarrow$''  Since $(A,B,C)$ is independent from $C$, every entry of $M^k(A,B,C)$ is either 0 or 1. Assume $\IST(A,B,C)=\infty$ and let $R(B)=\{b_1,b_2, \ldots, b_p\}$ where $b_i$ is a binary string of length $n$. By Corollary~\ref{CorMainTheorem=>}, we have that every row of $M^k(A,B,C)$ is of the form $0^r1^s0^t$. If $A$ is a layered matrix, then $R^1(A,B,C)\subseteq\{0^n,1^n\}$ and $$R^2(A,B,C)\subseteq\{ 0^{n^2}, 1^{n^2}, (b_1)^{n}, (b_2)^{n},\ldots , (b_p)^{n} \}.$$ So, $R(B) \subseteq \{0^n,1^n\}$ as otherwise, some strings in $R^2(A,B,C)$ are not of the form $0^r1^s0^t$. Thus, $B$ is a layered matrix.
	Suppose $A$ is not a layered matrix. By Lemma~\ref{Lem-1res}, $R^1(A,B,C) \subseteq \{01^{n-1}, 1^{n-1}0, 1^n\}$ . If both $01^{n-1}$ and $1^{n-1}0$ are rows in $A$, then $1^{n-1}0(b_i)^{n-1}$ is a row pattern in $R^2(A,B,C)$ for some $i$. Since every row of $M^k(A,B,C)$ contains at most one $0$, $b_i$ must be $1^n$, which contradicts $1^{n-1}0(b_i)^{n-1}$ not being of the form $0^r1^s0^t$. So, we have 
	$$A= \begin{bmatrix}
	a_1    & 1      & 1      & \cdots & 1 \\
	a_2    & 1      & 1      & \cdots & 1 \\ 
	\vdots & \vdots & \vdots &  	  & \vdots \\
	a_m    & 1      & 1      & \cdots &1 \end{bmatrix} \text{~~or~~}
	A= \begin{bmatrix}
	1      & 1      & \cdots & 1      & a_1   \\
	1      & 1      & \cdots & 1      & a_2   \\ 
	\vdots  & \vdots &  	 & \vdots & \vdots \\
	1      & 1      & \cdots & 1      &a_m    \end{bmatrix}$$
	where $a_i \in \{0,1\}$. Note that each row of $A$ is $1^n$, $01^{n-1}$ or $1^{n-1}0$. If row $i$ in $A$ is $1^n$, then row $((i-1)m+i)$ in $M^2(A,B,C)$ is $x^n$, where $x$ is row $i$ in $B$. Since $x^n$ cannot contain more than one $0$, we have $x= 1^n$. If row $i$ in $A$ is $01^{n-1}$, then row $((i-1)m+i)$ in $M^2(A,B,C)$ is $01^{n-1}x^{n-1}$, where $x$ is row $i$ in $B$. So, $x= 1^n$ because $01^{n-1}x^{n-1}$ contains at most one $0$. Similarly, if row $i$ in $A$ is $1^{n-1}0$, then row $i$ in $B$ is $1^n$. Hence,  $B$ is an all 1 matrix. \end{proof}

Next theorem can be proved similarly to Theorem~\ref{thm-M(A,B,C)IndependentFromC}.

\begin{theorem} \label{thm-M(A,B,C)IndependentFromB}
	Let $A,B$ and $C$ be $m \times n$ matrices over $\{-1,0,1\}$ such that $A$ has a $0$ and  $(A,B,C)$ is independent from $B$. Then, $\IST(A,B,C)= \infty$ if and only if $A$ and $C$ satisfy one of the following conditions, where $a_i \in \{0,1\}$:
	\begin{itemize}
		\item[$(1)$] $A$ and $C$ are layered matrices, or
		\item[$(2)$] $A= \begin{bmatrix}
		a_1    & -1     & -1     & \cdots & -1 \\
		a_2    & -1     & -1     & \cdots & -1 \\ 
		\vdots & \vdots & \vdots &  	  & \vdots \\
		a_m    & -1     & -1     & \cdots & -1 \end{bmatrix}$ and 
		$C= \begin{bmatrix}
		-1     &-1      & \cdots & -1  \\
		-1     &-1      & \cdots & -1  \\
		\vdots & \vdots &        & \vdots  \\
		-1     &-1      & \cdots & -1  \end{bmatrix}$, or
		
		\item[$(3)$] $A= \begin{bmatrix}
		-1     & -1     & \cdots & -1     & a_1   \\
		-1     & -1     & \cdots & -1     & a_2   \\ 
		\vdots & \vdots &        & \vdots & \vdots \\
		-1     & -1     & \cdots & -1     &a_m    \end{bmatrix}$ and 
		$C= \begin{bmatrix}
		-1     &-1      & \cdots & -1  \\
		-1     &-1      & \cdots & -1  \\
		\vdots & \vdots &        & \vdots  \\
		-1     &-1      & \cdots & -1 \end{bmatrix}$.
	\end{itemize}
\end{theorem}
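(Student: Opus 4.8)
The plan is to mirror the proof of Theorem~\ref{thm-M(A,B,C)IndependentFromC} verbatim, swapping the roles of $1$ and $-1$ (equivalently, swapping $B$ and $C$). Since $(A,B,C)$ is independent from $B$, every entry of $M^k(A,B,C)$ lies in $\{0,-1\}$, so the relevant row patterns are exactly those of the form $0^r(-1)^s0^t$.

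For the direction ``$\Leftarrow$'', I would observe that in cases (1), (2), (3) there is no $1$ in $A$ or $C$, and every row pattern of $M^k(A,B,C)$ lies in the set $\{(-1)^{n^k},0^{n^k},0(-1)^{n^k-1},(-1)^{n^k-1}0\}$; this is a short induction on $k$ using the explicit shapes of $A$ and $C$. Then Corollary~\ref{corr-MainTheorem} immediately gives semi-transitivity of $G_o^k(A,B,C)$ for all $k\ge 0$, hence $\IST(A,B,C)=\infty$.

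For ``$\Rightarrow$'', assume $\IST(A,B,C)=\infty$ and write $R(C)=\{c_1,\ldots,c_p\}$ with each $c_i$ a string over $\{0,-1\}$ of length $n$. By Corollary~\ref{CorMainTheorem=>}, every row of $M^k(A,B,C)$ has the form $0^r(-1)^s0^t$. If $A$ is layered, then $R^1(A,B,C)\subseteq\{0^n,(-1)^n\}$ and $R^2(A,B,C)\subseteq\{0^{n^2},(-1)^{n^2},(c_1)^n,\ldots,(c_p)^n\}$, forcing $R(C)\subseteq\{0^n,(-1)^n\}$ so $C$ is layered, giving case (1). If $A$ is not layered, then Lemma~\ref{Lem-1res} (third bullet, applied since $C$ would otherwise need not be layered — here one argues exactly as in Theorem~\ref{thm-M(A,B,C)IndependentFromC}, using that each row has at most one $-1$) gives $R^1(A,B,C)\subseteq\{0(-1)^{n-1},(-1)^{n-1}0,(-1)^n\}$; the two ``end-zero'' patterns cannot both occur, since $(-1)^{n-1}0(c_i)^{n-1}\in R^2(A,B,C)$ would then contain two $0$'s unless $c_i=(-1)^n$, contradicting the shape $0^r(-1)^s0^t$. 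This pins $A$ to one of the two matrix forms in (2), (3). Finally, examining the diagonal-block rows of $M^2(A,B,C)$ of the form $x^n$, $0(-1)^{n-1}x^{n-1}$, or $(-1)^{n-1}0x^{n-1}$ where $x$ is the corresponding row of $C$, each of which must avoid two $0$'s, forces $x=(-1)^n$ in every case, so $C$ is an all $-1$ matrix.

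The main obstacle, such as it is, lies in being careful that Lemma~\ref{Lem-1res} applies cleanly in the ``$A$ not layered'' branch: one must first rule out the possibility that both $0(-1)^{n-1}$ and $(-1)^{n-1}0$ appear using only Corollary~\ref{CorMainTheorem=>}, and then note that this already bounds the number of $0$'s per row, so the substitution argument for the rows of $C$ goes through. This is entirely parallel to the corresponding step in Theorem~\ref{thm-M(A,B,C)IndependentFromC}, so no genuinely new difficulty arises; the proof is obtained by the symmetry $1\leftrightarrow -1$, $B\leftrightarrow C$.
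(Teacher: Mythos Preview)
Your approach is exactly what the paper does: it states only that this theorem ``can be proved similarly to Theorem~\ref{thm-M(A,B,C)IndependentFromC}'', and your outline carries out the $1\leftrightarrow -1$, $B\leftrightarrow C$ swap.

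One correction is needed, though. In the ``$A$ not layered'' branch you invoke the \emph{third} bullet of Lemma~\ref{Lem-1res} and speak of rows having ``at most one $-1$''. That is the wrong bullet and the wrong intermediate claim; if each row really had at most one $-1$ you would get $R^1(A,B,C)\subseteq\{0^n,(-1)0^{n-1},0^{n-1}(-1)\}$, not the set you wrote. What you actually need (and what yields $R^1(A,B,C)\subseteq\{0(-1)^{n-1},(-1)^{n-1}0,(-1)^n\}$) is the \emph{first} bullet: since $A$ is not layered, every row of $M^k(A,B,C)$ contains at most one~$0$; combined with all entries lying in $\{0,-1\}$, this gives precisely that set. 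The same ``at most one $0$'' constraint is what drives the subsequent steps (ruling out both end-zero patterns simultaneously, and forcing each row of $C$ to be $(-1)^n$), so once the citation is fixed the rest of your argument is correct and parallels the paper exactly.
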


\begin{theorem} \label{thm-notIndeoendent-A=L}
	Let $A$, $B$ and $C$ be $m \times n$ matrices over $\{-1,0,1\}$ such that $A$ has a $0$ and  $(A,B,C)$ is not independent from $B$ and $C$. Suppose $A$ is a layered matrix. Then, $\IST(A,B,C)= \infty$ if and only if $B$ and $C$ are layered matrices.
\end{theorem}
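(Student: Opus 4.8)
The plan is to prove both directions, with the ``$\Leftarrow$'' direction being immediate and the ``$\Rightarrow$'' direction requiring the bulk of the work. For ``$\Leftarrow$'', if $A$, $B$, and $C$ are all layered, then Proposition~\ref{ABCallLayered} gives $\IST(A,B,C)=\infty$ directly. So I would dispatch this in one sentence.

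For ``$\Rightarrow$'', I would argue by contradiction: assume $A$ is layered, $(A,B,C)$ is not independent from $B$ nor from $C$, $\IST(A,B,C)=\infty$, but at least one of $B,C$ is not layered. By symmetry (swapping the roles of $1$ and $-1$), assume $B$ is not layered. First I would record what ``not independent from $B$'' and ``not independent from $C$'' buy us: there is a $1$ somewhere in $A$ or $C$, and there is a $-1$ somewhere in $A$ or $C$. Since $A$ is layered, each row of $A$ is $0^n$, $1^n$, or $(-1)^n$; the ``$A$ has a $0$'' hypothesis means $A$ is not forced to be degenerate, and more importantly it lets us keep re-applying the morphism to a $0$-cell, so $A=M^1(A,B,C)$ sits as a submatrix of $M^k(A,B,C)$ for all $k\ge 1$, and by the standard self-similarity observation any $1^n$ row of $A$ produces a block of copies of $B$, any $(-1)^n$ row produces a block of copies of $C$, inside $M^2(A,B,C)$.

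The key step is then a case analysis forcing a contradiction with Corollary~\ref{CorMainTheorem=>} (every row must be of the form $0^r1^s0^t$, $0^r(-1)^s0^t$, or $1^r0^s(-1)^t$). Since $B$ is not layered, Lemma~\ref{Lem-1res} already tells us that (if $\IST=\infty$) no row of any $M^k(A,B,C)$ contains two $1$'s. Now I split on where the $1$'s and $-1$'s promised by non-independence live. \emph{Case 1: $A$ contains a $1^n$ row.} Then $M^2(A,B,C)$ contains, on the diagonal block corresponding to that row, a full row of the form $x^n$ where $x$ is a non-layered row of $B$ having two distinct entries among $\{-1,0,1\}$; concatenating $n\ge 2$ copies of such an $x$ yields a row that is not of the form $0^r1^s0^t$ etc.\ (it either repeats a $1$, or produces a pattern like $\cdots(-1)\cdots 1\cdots$ or a $0$ flanked on both sides by nonzeros across the copies) — contradiction. \emph{Case 2: $A$ has no $1^n$ row}, so all the $1$'s promised by ``not independent from $C$'' must come from $C$; likewise, if $A$ has no $(-1)^n$ row, all $-1$'s come from $C$ as well (the other possibility, $-1\in A$, would need an $(-1)^n$ row of $A$). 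Subcase 2a: $A$ has a $(-1)^n$ row. Then $M^2$ contains a block of copies of $C$; since non-independence from $B$ forces a $1$ into $C$ (when $A$ has no $1$s) and $C$ must also be non-layered or we get a contradiction another way, I would push the same two-copies-of-a-bad-row argument through $C$. Subcase 2b: $A$ is all $0^n$, i.e. $A$ is the zero matrix — but then $M^k(A,B,C)$ is always zero by Remark~\ref{A=0}, so $A$ being the zero matrix is consistent with $\IST=\infty$; however it is still layered, and ``not independent from $B$ and $C$'' cannot fail to force a $\pm1$ into... wait, it can: non-independence refers to $A$ \emph{or} $C$, so $C$ would have to contain both a $1$ and a $-1$, making $C$ non-layered, and then since $A$ is zero $M^k(A,B,C)$ is identically zero and $C$ is never actually instantiated — but $B$ non-layered is likewise never instantiated, so this subcase must be excluded by noting that if $A$ is the zero matrix then $\IST=\infty$ trivially \emph{and} the hypothesis forces nothing, so I should double-check whether the theorem statement implicitly excludes $A=O$; I expect it does not and that the intended reading makes Subcase 2b vacuous because with $A=O$ one can relabel/ignore $B,C$, or the authors simply treat $A=O$ under Remark~\ref{A=0} separately. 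I would flag this and handle $A=O$ explicitly as a trivial instance where $B,C$ may be taken layered.

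The main obstacle I anticipate is precisely this bookkeeping: ``not independent from $B$ and from $C$'' is a disjunctive condition (the offending $\pm1$ may live in $A$ or in $C$), so one must carefully ensure that in \emph{every} branch either $B$ or $C$ actually gets instantiated as a submatrix of some $M^k(A,B,C)$ in a position where a non-layered row would be visible as an illegal pattern. The cleanest route is: show that under $\IST=\infty$ both $B$ and $C$ \emph{do} appear as submatrices of $M^k(A,B,C)$ for large enough $k$ (using that the $\pm1$'s promised by non-independence eventually propagate: a $1$ anywhere maps to $B$, any $-1$ produced within $B$ or within $C$ or within $A$ maps to $C$, and vice versa), and then invoke Lemma~\ref{Lem-1res}-type reasoning together with the ``two copies side by side'' trick from the proof of Lemma~\ref{Lem-2resA} to conclude that every row of $B$ and of $C$ must be constant. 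Hence $B$ and $C$ are layered, contradicting our assumption and completing the proof.
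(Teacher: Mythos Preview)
Your approach and the paper's hinge on the same tool (Lemma~\ref{Lem-1res}), but the paper argues directly rather than by contradiction, and this is considerably cleaner. After setting aside $A=O$ as ``trivial'', the paper assumes W.L.O.G.\ that $1^n\in R(A)$. Since $M^1(A,B,C)=A$ and $n\ge 2$, this row already contains two $1$'s, so the contrapositive of Lemma~\ref{Lem-1res} immediately yields that $B$ is layered---no trip to $M^2$ is needed, so your Case~1 is doing more work than necessary. If also $(-1)^n\in R(A)$, the same argument gives $C$ layered; otherwise $A$ contains no $-1$, and here you need the correct reading of Definition~\ref{DefIndependent}: ``not independent from $C$'' means there is a $-1$ in $A\cup B$ (not $A\cup C$, as you wrote). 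Hence $-1\in B$, and since $B$ is already known to be layered, $(-1)^n\in R(B)$; the block $BB\cdots B$ inside $M^2(A,B,C)$ (coming from the row $1^n$ of $A$) then contributes the row $(-1)^{n^2}$, and Lemma~\ref{Lem-1res} forces $C$ to be layered. This chain through $B$ is exactly what your contrapositive packaging obscures, and it is why the paper never needs your Subcase~2a/2b split.

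Your unease in Subcase~2b is warranted: when $A$ is the zero matrix, $M^k(A,B,C)$ is identically zero regardless of $B,C$ (Remark~\ref{A=0}), so $\IST=\infty$ even if, say, $B$ is not layered while $B$ contains a $-1$ and $C$ contains a $1$ (which satisfies both non-independence hypotheses). The paper simply declares this case ``trivial'' and moves on; strictly speaking the ``only if'' direction as stated is vacuous there, since $B$ and $C$ are never instantiated in any $M^k$. You are right to flag it, but it is a boundary artifact of the statement rather than a hole in the argument for the substantive case $A\neq O$.
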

\begin{proof}
	Suppose $\IST(A,B,C)= \infty$. The case when $A$ is a zero matrix is trivial. Thus, assume that $1^n$ or $(-1)^n$ is a row in $A$. W.L.O.G., we suppose that $1^n$ is a row in $A=M^1(A,B,C)$. By Lemma~\ref{Lem-1res}, we have $B$ is a layered matrix. If $A$ also contains a row $(-1)^n$, then $C$ is a layered matrix with the same reason. If $A$ does not contain a row $(-1)^n$, then $(-1)^n$ must be a row of $B$ because $(A,B,C)$ is not independent from $B$ and $C$. Since  $1^n$ is a row of $A$, we have $BB \cdots B$ are $m$ consecutive rows in $M^2(A,B,C)$. As $(-1)^n$ is a row in $B$, we have that $(-1)^{n^2}$ is a row in $M^2(A,B,C)$. By Lemma~\ref{Lem-1res}, $C$ is a layered matrix.
	
	For the converse direction, it is clear from Proposition~\ref{ABCallLayered}  that if $A$, $B$ and $C$ are layered matrices, then $\IST(A,B,C) = \infty$.
\end{proof}

\begin{definition}\label{all-but-triples-DEF}
	Let $A$, $B$, $C$ be $m \times n$ matrices over $\{-1,0,1\}$. The triple $(A,B,C)$ is said to be
	\begin{itemize}
		\item an {\em all-but-leftmost-negative triple} if $R(A), R(B) \subseteq \{0(-1)^{n-1}, 1(-1)^{n-1}\}$ and $C$ is an all $(-1)$ matrix,
		\item an {\em all-but-rightmost-negative triple} if $R(A), R(B) \subseteq \{(-1)^{n-1}0, (-1)^{n-1}1\}$ and $C$ is an all $(-1)$ matrix,
		\item an {\em all-but-leftmost-positive triple} if $R(A), R(B) \subseteq\{01^{n-1}, (-1)1^{n-1}\}$ and $C$ is an all $1$ matrix,
		\item an {\em all-but-rightmost-positive triple} if $R(A), R(B) \subseteq \{1^{n-1}0,$ $ 1^{n-1}(-1)\}$ and $C$ is an all $1$ matrix.
	\end{itemize}
\end{definition}

From Definition~\ref{all-but-triples-DEF}, we can easily see that 
\begin{itemize}
	\item If $(A,B,C)$ is all-but-leftmost-negative, then\\ \centerline{$R^k(A,B,C) \subseteq \{0(-1)^{n^k-1}, 1(-1)^{n^k-1}\}$,}
	\item If $(A,B,C)$ is all-but-rightmost-negative, then\\ \centerline{$R^k(A,B,C) \subseteq \{(-1)^{n^k-1}0, (-1)^{n^k-1}1\}$,}
	\item If $(A,B,C)$ is all-but-leftmost-positive, then\\ \centerline{$R^k(A,B,C) \subseteq \{01^{n^k-1}, (-1)1^{n^k-1}\}$,}
	\item If $(A,B,C)$ is all-but-rightmost-positive, then\\ \centerline{$R^k(A,B,C) \subseteq \{1^{n^k-1}0, 1^{n^k-1}(-1)\}$.}
\end{itemize}
With this observation, we can prove the following theorem.

\begin{theorem} \label{thm-notIndeoendent-A,B=NL-C=L}
	Let $A$, $B$, $C$ be $m \times n$ matrices over $\{-1,0,1\}$ such that $A$ has a $0$ and $(A,B,C)$ is not independent from $B$ and $C$. Suppose $A$ and $B$ are not layered matrices and $C$ is a layered matrix. Then, $\IST(A,B,C) = \infty$ if and only if $(A,B,C)$ is an  all-but-leftmost-negative triple.
\end{theorem}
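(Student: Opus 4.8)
\emph{Plan.} I will prove the two implications separately; ``$\Leftarrow$'' is short and ``$\Rightarrow$'' is where the work is. For ``$\Leftarrow$'', assume $(A,B,C)$ is all-but-leftmost-negative. By the observation displayed just before the theorem, $R^k(A,B,C)\subseteq\{0(-1)^{n^k-1},\,1(-1)^{n^k-1}\}$ for every $k\geq 0$. Each of these two strings is of the form $0^r(-1)^s0^t$ or $1^r0^s(-1)^t$, so condition (i) of Corollary~\ref{CorMainTheorem<=} holds, and condition (ii) is vacuous for $M^k(A,B,C)$: the only pattern of shape $1^a0^b(-1)^c$ present is $1(-1)^{n^k-1}$ (with $a=1$, $b=0$), and the only other pattern $0(-1)^{n^k-1}$ has a $0$ in coordinate $1$, so no row distinct from $1(-1)^{n^k-1}$ is nonzero in coordinates $1$ and $2$. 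Hence $G_o^k(A,B,C)$ is semi-transitive for all $k$, i.e.\ $\IST(A,B,C)=\infty$.

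For ``$\Rightarrow$'', assume $\IST(A,B,C)=\infty$. First I would record the coarse structure forced by the earlier results: Lemma~\ref{Lem-2res} makes $C$ the all $(-1)$ matrix, whence ``$(A,B,C)$ not independent from $B$'' forces $A$ to contain a $1$; Lemma~\ref{Lem-1res} makes every row of every $M^k(A,B,C)$ have at most one $0$ and at most one $1$; and Corollary~\ref{CorMainTheorem=>} makes every such row of the form $0^r1^s0^t$, $0^r(-1)^s0^t$ or $1^r0^s(-1)^t$. A direct check shows the only width-$N\ (\geq 2)$ strings surviving all three constraints are $(-1)^N$, $0(-1)^{N-1}$, $(-1)^{N-1}0$, $1(-1)^{N-1}$, $10(-1)^{N-2}$, and (only when $N=2$) $01$. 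This applies to the rows of $A$, and---since $A$ has a $1$, so that $B$ occurs as a block of $M^2(A,B,C)$ and each row of $B$ is a contiguous subword of a row of $M^2(A,B,C)$---also to the rows of $B$. So it remains to eliminate the four ``bad'' shapes $(-1)^n$, $(-1)^{n-1}0$, $10(-1)^{n-2}$, $01$ from both $A$ and $B$; what is left is exactly $R(A),R(B)\subseteq\{0(-1)^{n-1},1(-1)^{n-1}\}$ (and, $A$ having a $0$ and a $1$, both shapes actually occur in $A$), which is the all-but-leftmost-negative condition.

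Each elimination will push the morphism one or two further steps and then invoke Corollary~\ref{CorMainTheorem=>} (a forbidden row shape) or Lemma~\ref{Lem-1res} (a row with two $0$'s or two $1$'s). A row $(-1)^{n-1}0$ in $A$ (resp.\ in $B$) produces, in the appropriate block of $M^2(A,B,C)$, a row in which a $0$ sits strictly between two $-1$'s, or a $1$ sits to the right of a $-1$ (using that some row of $A$ has a $1$). A row $10(-1)^{n-2}$ in $A$ is killed at $M^2(A,B,C)$: its block's diagonal row is (a row of $B$) followed by $10$ followed by a string of $-1$'s, which has two $0$'s, two $1$'s, or a $1$ to the right of a $-1$. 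A row $10(-1)^{n-2}$ in $B$ takes two further steps: it reappears, lengthened, in $M^2(A,B,C)$, and expanding that occurrence (first block $\mapsto B$, second $\mapsto A$, the rest $\mapsto C$) gives, on the diagonal index, a row beginning $10(-1)^{n-2}$ followed by a row of $A$, which---once we know every row of $A$ is $0(-1)^{n-1}$ or $1(-1)^{n-1}$---has two $0$'s or two $1$'s. A row $01$ (only for $n=2$) similarly forces $01(-1)(-1)$. The remaining pattern $(-1)^n$ needs a separate small lemma: \emph{if some $M^k(A,B,C)$ has both a row $(-1)^{n^k}$ and a row $1(-1)^{n^k-1}$, then $G_o^k(A,B,C)$ is not semi-transitive}---because Lemma~\ref{Lem-no0row} applied to the all $(-1)$ row (the case $r=0$, prepending a zero column) turns $1(-1)^{n^k-1}$ into $01(-1)^{n^k-1}$, which is of no allowed form, so Corollary~\ref{CorMainTheorem=>} applies. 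Since $A$ has a row $1(-1)^{n-1}$, a row $(-1)^n$ in $A$ already breaks this at $k=1$; and a row $(-1)^n$ in $B$, fed once through the morphism alongside the rows $0(-1)^{n-1}$ and $1(-1)^{n-1}$ of $A$, makes $M^2(A,B,C)$ contain both $(-1)^{n^2}$ and $1(-1)^{n^2-1}$.

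The main obstacle will be exactly that the patterns $(-1)^n$ and $10(-1)^{n-2}$ sitting inside $B$ are not lethal at the stage where they first appear---$G_o^1(A,B,C)$, and frequently $G_o^2(A,B,C)$ as well, can still be semi-transitive---so the argument must bookkeep where these patterns are transported by one or two more applications of the morphism and verify the ``at most one $0$'' / allowed-shape conditions at precisely the coordinates that fail; the corner case $n=2$, where $01$ and the ``type~C with a gap'' pattern $10(-1)^{n-2}$ collapse, has to be carried along (easily) throughout.
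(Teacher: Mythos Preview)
Your plan is correct and follows the same overall architecture as the paper: for ``$\Leftarrow$'' you invoke the row-pattern observation and Corollary~\ref{CorMainTheorem<=}; for ``$\Rightarrow$'' you use Lemma~\ref{Lem-2res} to make $C$ all $(-1)$, Lemma~\ref{Lem-1res} and Corollary~\ref{CorMainTheorem=>} to pin down the finite list of admissible row shapes, and then kill the bad shapes one at a time. Two remarks are worth making.

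\textbf{Ordering.} Your eliminations for $B$ presuppose that $R(A)\subseteq\{0(-1)^{n-1},1(-1)^{n-1}\}$ has already been established (you say so explicitly for $10(-1)^{n-2}$ in $B$, and it is also what makes the $(-1)^{n-1}0$-in-$B$ and $(-1)^n$-in-$B$ steps clean, since then the only row-blocks of $M^2$ are $AC^{n-1}$ and $BC^{n-1}$). So the write-up should do all of $A$ first, then $B$. Within $A$, note that your $(-1)^n$ step assumes $1(-1)^{n-1}\in R(A)$; this only follows after you have already expelled $10(-1)^{n-2}$ and $01$ from $A$, so place it last among the $A$-steps. With that order everything goes through as you describe. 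Incidentally, your ``small lemma'' is exactly Corollary~\ref{0-10 and 10-1}, which you may simply cite.

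\textbf{Comparison with the paper's argument for $B$.} The paper does not eliminate the bad $B$-patterns one by one and never needs $M^3$. Once $R(A)=\{0(-1)^{n-1},1(-1)^{n-1}\}$ is known, $M^2$ contains the row $1(-1)^{n^2-1}$ (from the block $AC^{n-1}$). Applying Lemma~\ref{Lem-no0row} with $r=1$ inserts a zero column between columns $1$ and $2$; the row $b_{i1}b_{i2}\cdots b_{in}(-1)^{n(n-1)}$ of the block $BC^{n-1}$ becomes $b_{i1}\,0\,b_{i2}\cdots b_{in}(-1)^{n(n-1)}$, and Corollary~\ref{CorMainTheorem=>} together with ``at most one $0$'' forces $b_{i1}0b_{i2}\cdots b_{in}\in\{0^2(-1)^{n-1},\,10(-1)^{n-1}\}$, i.e.\ $b_i\in\{0(-1)^{n-1},1(-1)^{n-1}\}$ in one stroke. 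Your route via $M^3$ is a valid substitute, just longer.
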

\begin{proof}
	``$\Leftarrow$'' Let $(A,B,C)$ be all-but-leftmost-negative. Then, for any $k \geq1$, $$ M^k(A,B,C)=\begin{bmatrix}
	x_1 	& -1 	& -1 	& \cdots 	& -1\\
	x_2 	& -1 	& -1	& \cdots 	& -1\\
	\vdots 	&\vdots &\vdots	&  			& \vdots \\
	x_{m^k} & -1 	& -1	& \cdots 	& -1
	\end{bmatrix}$$ 
	where $x_i \in \{0,1\}$. So $M^k(A,B,C)$ satisfies both conditions in Corollary~\ref{CorMainTheorem<=}, and hence  $\IST(A,B,C) = \infty$.\\[-2mm]
	
\noindent
``$\Rightarrow$'' Suppose $\IST(A,B,C)= \infty$. From Lemma~\ref{Lem-2res}, we have that $C$ is an all $(-1)$ matrix. By Lemma~\ref{Lem-1res}, every row of $M^k(A,B,C)$ does not contain more than one $0$ and more than one $1$. Note that every row of $A$ must be of the form $0^r1^s0^t$, $0^r(-1)^s0^t$ or $1^r0^s(-1)^t$, where $r,s,t\geq 0$. So, all possible row patterns of $A$ are in 

\ \ \ \ \  $\{01, 10, 0(-1)^{n-1}, (-1)^{n-1}0, (-1)^n, 1(-1)^{n-1},10(-1)^{n-2}\}$.
	
	Suppose that $n=2$ and row $i$ in $A$ is $01$. Then, the submatrix of $M^2(A,B,C)$ formed by rows $(i-1)m+1, (i-1)m+2, \ldots, im$ and columns $1, 2, 3, 4$ is $AB$. So, row $(i-1)m+i$ in $M^2(A,B,C)$ is $01x$, where $x$ is row $i$ in $B$. Note that $01x$ must be of the form $0^r1^s0^t$, where $r,s,t\geq 0$. Therefore, $x$ is $11$ because $M^2(A,B,C)$ contains at most one $0$. So, $01x$ contains more than one $1$, which contradicts Lemma~\ref{Lem-1res}. Hence, $01$ cannot be a row in $A$. Similarly, we obtain that $10$ is also not a row in $A$. Hence, we have that $01$ and $10$ cannot be a row in $A$.
	
 Suppose row $i$ in $A$ is $10(-1)^{n-2}$. Then there is $m$ consecutive rows in $M^2(A,B,C)$ built by $BACC\cdots C$. Note that row $i$ in $BACC \cdots C$ is $y10(-1)^{n-2}zz \cdots z$ , where $y$ and $z$ are rows $i$ in $B$ and $C$, respectively. Since $\IST(A,B,C) = \infty$, $y10(-1)^{n-2}zz \cdots z$ must be of the form $1^r0^s(-1)^t$, where $r,s,t\geq 0$. Thus, $y=1^n$ and $z=(-1)^{n}$. This contradicts to the fact that any row in $M^2(A,B,C)$ has at most one 1. Hence, $10(-1)^{n-2}$ cannot be a row in $A$.
	
	Now, all possible row patterns of $A$ are in 
	
\centerline{$\{0(-1)^{n-1}, (-1)^{n-1}0, (-1)^n,1(-1)^{n-1}\}$. }

\noindent
If $1(-1)^{n-1}$ is not a row in $A$, then $(A,B,C)$ is independent from $C$. Then $1(-1)^{n-1}$ must be a row in $A$. By Lemma~\ref{Lem-no0row}, we have $G^1_o(A,B,C)$ is isomorphic to $G_o(A^*)$ where $A^*$ is the matrix obtained by deleting row $1(-1)^{n-1}$ from $A$ and after that adding a zero column between the first and the second columns. If $(-1)^{n-1}0$ or $(-1)^{n-1}$ is a row in $A$, then $(-1)0(-1)^{n-3}0$ or $(-1)0(-1)^{n-2}$ is a row in $A^*$, respectively. By Corollary~\ref{CorMainTheorem=>}, $G_o(A^*)$ and  $G^1_o(A,B,C)$ are not semi-transitive. Therefore $(-1)^{n-1}0$ and $(-1)^{n}$ are not rows in $A$ and we have $$ A=\begin{bmatrix}
	a_1 	& -1 	& -1 	& \cdots 	& -1\\
	a_2 	& -1 	& -1	& \cdots 	& -1\\
	\vdots 	&\vdots &\vdots	& 			& \vdots \\
	a_m 	& -1 	& -1	& \cdots 	& -1
	\end{bmatrix} \text{~~where~~}a_i \in \{0,1\}.$$

	Since both $0(-1)^{n-1}$ and $1(-1)^{n-1}$ are rows in $A$, there are $m$ consecutive rows of $M^2(A,B,C)$ built by $ACC \cdots C$ and $BCC \cdots C$. Then $1(-1)^{n^2-1}$ is a row in $M^2(A,B,C)$.
	By Lemma~\ref{Lem-no0row}, we have $G_o^2(A,B,C)$ is isomorphic to $G_o(N)$ where $N$ is a matrix obtained by deleting row $1(-1)^{n^2-1}$ from $M^2(A,B,C)$ and after that adding a zero column between the first and the second columns. Note that a row $i$ of $BCC \cdots C$ is $b_{i1}b_{i2}\cdots b_{in}(-1)^{n^2-n}$ where $b_{i1}b_{i2}\cdots b_{in}$ is the row $i$ of $B$. Since $G_o(N)$ is semi-transitive and $b_{i1}0b_{i2}\cdots b_{in}(-1)^{n^2-n}$ is a row of $N$, we have $b_{i1}0b_{i2}\cdots b_{in}$ is $0^{r+1}(-1)^{n+1-r}$ or $10^r(-1)^{n-r}$ for some $1 \leq r \leq n$. As $M^2(A,B,C)$ contains at most one $0$, we obtain that $b_{i1}0b_{i2}\cdots b_{in}$ must be $10(-1)^{n-1}$ or $0^2(-1)^{n-1}$ for any $1 \leq i \leq m$. That is, $$ B=\begin{bmatrix}
		b_1 	& -1 	& -1 	& \cdots 	& -1\\
		b_2 	& -1 	& -1	& \cdots 	& -1\\
		\vdots 	&\vdots &\vdots	&			& \vdots \\
		b_m & -1 	& -1	& \cdots 	& -1
		\end{bmatrix} \text{~~where~~}b_i \in \{0,1\}.$$
	
\end{proof}
Using similar arguments, we can prove the following theorem.

\begin{theorem} \label{thm-notIndeoendent-A,C=NL-B=L}
	Let $A$, $B$, $C$ be $m \times n$ matrices over $\{-1,0,1\}$ such that $A$ has a $0$ and $(A,B,C)$ is not independent from $B$ and $C$. Suppose $A$ and $C$ are not layered matrices and $B$ is a layered matrix. Then, $\IST(A,B,C) = \infty$ if and only if $(A,B,C)$ is all-but-rightmost-positive.
\end{theorem}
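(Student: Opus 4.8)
The plan is to mirror the proof of Theorem~\ref{thm-notIndeoendent-A,B=NL-C=L} after swapping the roles of $1$ and $-1$, of $B$ and $C$, and of ``leftmost'' and ``rightmost''. The backward direction is immediate: if $(A,B,C)$ is all-but-rightmost-positive, then by the observation following Definition~\ref{all-but-triples-DEF} every row pattern of $M^k(A,B,C)$ lies in $\{1^{n^k-1}0,\,1^{n^k-1}(-1)\}$, so every row of $M^k(A,B,C)$ has the form $1^r0^s$ or $1^r(-1)^s$, which trivially satisfies both conditions of Corollary~\ref{CorMainTheorem<=} (condition~(ii) is vacuous since no row has the shape $1^a0^b(-1)^c$ with $a,b,c>0$). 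Hence $\IST(A,B,C)=\infty$.

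For the forward direction, assume $\IST(A,B,C)=\infty$. First, since $A$ and $C$ are not layered, Lemma~\ref{Lem-2res} forces $B$ to be an all $1$ matrix, and Lemma~\ref{Lem-1res} tells us that no row of any $M^k(A,B,C)$ contains two $0$'s or two $(-1)$'s. Next, by Corollary~\ref{CorMainTheorem=>} every row of $A$ has the form $0^r1^s0^t$, $0^r(-1)^s0^t$ or $1^r0^s(-1)^t$; combining this with the at-most-one-$0$ and at-most-one-$(-1)$ restrictions, the possible row patterns of $A$ are confined to $\{01,10,01^{n-1},1^{n-1}0,1^n,1^{n-1}(-1),1^{n-2}0(-1)\}$. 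I would then rule out the ``bad'' patterns one at a time exactly as in Theorem~\ref{thm-notIndeoendent-A,B=NL-C=L}: if $n=2$ and $01$ (resp.\ $10$) were a row $i$ of $A$, then row $(i-1)m+i$ of $M^2(A,B,C)$ would be $01x$ (resp.\ $x10$) with $x$ the $i$-th row of $B=1^n$, producing a second $1$ in that row and contradicting Lemma~\ref{Lem-1res}; and a row $1^{n-2}0(-1)$ in $A$ would, after one more morphism step through $BBB\cdots BC$, yield a row $1^{n-2}0(-1)1^{n^2-n}$-type pattern forcing either an extra $0$ or an extra $(-1)$, again a contradiction. (Here $B$ being all $1$'s actually makes these arguments cleaner than in the $C$-layered case.) This leaves $R(A)\subseteq\{01^{n-1},1^{n-1}0,1^n,1^{n-1}(-1)\}$.

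It remains to eliminate $01^{n-1}$ and $1^n$ from $R(A)$ and to pin down the form of $B$ and $A$; but $B$ is already all $1$'s, so the only work is on $A$. Since $(A,B,C)$ is not independent from $C$, some $-1$ appears in $A$ or $B$; as $B=1^n$, some row of $A$ must contain a $-1$, hence $1^{n-1}(-1)\in R(A)$. Now apply Lemma~\ref{Lem-no0row}: deleting the row $1^{n-1}(-1)$ from $A$ and inserting a zero column between columns $n-1$ and $n$ gives a matrix $A^*$ with $G_o^1(A,B,C)\cong G_o(A^*)$, and if $01^{n-1}$ or $1^n$ were a row of $A$ then $A^*$ would contain $01^{n-1}0$ or $1^{n-1}0(-1)$; wait — more carefully, a row $1^n$ of $A$ becomes $1^{n-1}0(-1)$-adjacent only after accounting for the inserted column, so $1^n \mapsto 1^{n-1}01$ and $01^{n-1}\mapsto 01^{n-2}01$, and by Corollary~\ref{CorMainTheorem=>} (a row of the form $0^r1^s0^t$ is violated by $1^{n-1}01$, which is not $1^r0^s(-1)^t$ either) we reach a contradiction, so $01^{n-1},1^n\notin R(A)$. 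Therefore $R(A)\subseteq\{1^{n-1}0,1^{n-1}(-1)\}$, i.e.\ $A$ has the all-but-rightmost-positive form, completing the proof. The main obstacle I anticipate is getting the bookkeeping of the inserted zero column right in the final Lemma~\ref{Lem-no0row} step — in the $1^n$ case one must track that the deleted clique vertex sits at position $n$, so the remaining rows acquire their new $0$ at position $n$, and then verify carefully that $1^{n-1}01$ indeed fails Corollary~\ref{CorMainTheorem=>}; the rest is a routine transcription of the $C$-layered argument with $1\leftrightarrow-1$ and left$\leftrightarrow$right interchanged.
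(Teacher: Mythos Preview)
Your overall strategy---mirror the proof of Theorem~\ref{thm-notIndeoendent-A,B=NL-C=L} under the involution $1\leftrightarrow -1$, $B\leftrightarrow C$, left$\leftrightarrow$right---is exactly what the paper does. However, the execution has two real problems.

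\textbf{The $n=2$ special cases are mistranscribed.} Under the symmetry, the paper's exceptional patterns $01$ and $10$ (coming from $0^r1^s0^t$ with $s\le 1$) become $0(-1)$ and $(-1)0$ (coming from $0^r(-1)^s0^t$ with $s\le 1$), not $01$ and $10$ again. Your elimination argument then breaks: you invoke ``a second $1$'' via Lemma~\ref{Lem-1res}, but in the present setting $B$ \emph{is} layered, so Lemma~\ref{Lem-1res} says nothing about repeated $1$'s---only about repeated $0$'s (from $A$ not layered) and repeated $(-1)$'s (from $C$ not layered). The correct elimination of $0(-1)$ (say) looks at row $i$ of $AC$, namely $0(-1)y$ with $y$ the $i$-th row of $C$; this must be $0^r(-1)^s0^t$, the at-most-one-$0$ restriction forces $y=(-1)(-1)$, and then you have three $(-1)$'s, contradicting the at-most-one-$(-1)$ restriction. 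The argument for $(-1)0$ is symmetric.

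\textbf{You never constrain $C$.} This is the main gap. The all-but-rightmost-positive condition (with the evident typo in Definition~\ref{all-but-triples-DEF} corrected) requires $B$ to be all $1$'s \emph{and} $R(A),R(C)\subseteq\{1^{n-1}0,\,1^{n-1}(-1)\}$. You establish $B$ all $1$'s and $R(A)\subseteq\{1^{n-1}0,1^{n-1}(-1)\}$, then stop, saying ``$B$ is already all $1$'s, so the only work is on $A$''. But $C$ is the non-layered matrix whose row patterns must be pinned down. The missing step, dual to the last paragraph of the paper's proof of Theorem~\ref{thm-notIndeoendent-A,B=NL-C=L}, goes as follows: since $A$ has a $0$ and $R(A)\subseteq\{1^{n-1}0,1^{n-1}(-1)\}$, both patterns occur in $A$; hence $B^{n-1}A$ and $B^{n-1}C$ each give $m$ consecutive rows of $M^2(A,B,C)$, and in particular $1^{n^2-1}(-1)$ is a row of $M^2(A,B,C)$. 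Apply Lemma~\ref{Lem-no0row} to $M^2(A,B,C)$: deleting $1^{n^2-1}(-1)$ and inserting a zero column between columns $n^2-1$ and $n^2$ turns row $i$ of $B^{n-1}C$, namely $1^{n(n-1)}c_{i1}\cdots c_{in}$, into $1^{n(n-1)}c_{i1}\cdots c_{i(n-1)}0c_{in}$. Corollary~\ref{CorMainTheorem=>} then forces $c_{i1}\cdots c_{i(n-1)}0c_{in}$ to be $1^{n-1}00$ or $1^{n-1}0(-1)$ (using the at-most-one-$0$ and at-most-one-$(-1)$ restrictions on $M^2$), i.e.\ row $i$ of $C$ is $1^{n-1}0$ or $1^{n-1}(-1)$, as required.
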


By now, we already have classification for triples $(A,B,C)$ with the index of semi-transitivity infinity except for the case when $A$ is not a layered matrix and $B$ and $C$ are layered matrices and  $(A,B,C)$ is not independent from $B$, $C$. To solve the remaining cases, we need the following four lemmas.

\begin{lemma} \label{Lem-5properties}
	Let $A$, $B$, $C$ be $m \times n$ matrices over $\{-1,0,1\}$ such that $A$ has a $0$ and $(A,B,C)$ is not independent from $B$ and $C$. Then, 
	\begin{itemize}
		\item[$(1)$] \label{sublemop1} if $01^{n-1}$ and $1^{n-1}0$ are rows in $A$, then $\IST(A,B,C)$ is finite; 
		\item[$(2)$] \label{sublemop2} if $0(-1)^{n-1}$ and $(-1)^{n-1}0$ are rows in $A$, then $\IST(A,B,C)$ is finite; 
		\item[$(3)$] \label{sublemop3} if $01^{n-1}$ and $(-1)^{n-1}0$ are rows in $A$, then $\IST(A,B,C)$ is finite;	
		\item[$(4)$] \label{sublemop4} if $0(-1)^{n-1}$ and $1^{n-1}0$ are rows in $A$, then $\IST(A,B,C)$ is finite; 
		\item[$(5)$] \label{sublemop5} if $1^p0(-1)^{n-p-1}$ and $1^q0(-1)^{n-q-1}$ are rows in $A$, where $0 \leq p < q \leq n-1$, then $\IST(A,B,C)$ is finite.
	\end{itemize}
\end{lemma}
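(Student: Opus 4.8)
The plan is to argue by contradiction in each of the five cases: assume $\IST(A,B,C)=\infty$ and exhibit $\ell\in\{1,2\}$ for which $G_o^{\ell}(A,B,C)$ is not semi-transitive. In every case the stated hypothesis puts two distinct rows into $A$, so $A$ is not layered; hence by Lemma~\ref{Lem-1res} together with Corollary~\ref{CorMainTheorem=>}, the assumption $\IST=\infty$ implies that, for all $k$, every row pattern of $M^k(A,B,C)$ is of one of the forms $0^r1^s0^t$, $0^r(-1)^s0^t$, $1^r0^s(-1)^t$ and contains at most one $0$ (and, by Remark~\ref{remark-no0not1(-1)}, never has a $1$ to the right of a $-1$). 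The engine of the proof is a short computation inside $M^2(A,B,C)$: the block produced from a chosen row of $A$, restricted to a chosen row index (not necessarily that row), is a string obtained by concatenating, in the order dictated by that row of $A$, some copies of a fixed row of $B$, one copy of a fixed row of $A$, and some copies of a fixed row of $C$. Using this with the chosen row of $A$ equal to $01^{n-1}$ (resp.\ $0(-1)^{n-1}$, resp.\ $1^{n-1}0$) and with the chosen row index equal to that same row $i$, the resulting sub-row is $(01^{n-1})\beta_i^{\,n-1}$ (resp.\ $(0(-1)^{n-1})\gamma_i^{\,n-1}$, resp.\ $\beta_i^{\,n-1}(1^{n-1}0)$), where $\beta_i,\gamma_i$ denote rows $i$ of $B,C$; since this must again be admissible with at most one $0$, we conclude $\beta_i=1^n$ (resp.\ $\gamma_i=(-1)^n$, resp.\ $\beta_i=1^n$), for otherwise a second $0$ or a $1$ to the right of a $-1$ would appear.

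Parts (1)--(3) then close at level $2$. In (1): $01^{n-1}\in R(A)$ forces $\beta_i=1^n$, and the sub-row indexed by $i$ of the block built from $1^{n-1}0$ equals $1^{n^2-n}01^{n-1}$, which is of none of the three admissible forms, so Corollary~\ref{CorMainTheorem=>} gives the contradiction. Part (2) is the mirror image under $1\leftrightarrow-1$, $B\leftrightarrow C$, the offending pattern being $(-1)^{n^2-n}0(-1)^{n-1}$. In (3): the sub-row of the block built from $(-1)^{n-1}0$ indexed by the row $01^{n-1}$ ends in $01^{n-1}$, so it carries a $0$ with $1$'s strictly to its right; to be admissible it must therefore be of the form $0^r1^s0^t$ with that $0$ in the leading block $0^r$, whence its entire prefix through that position is $0$'s, giving at least $n^2-n+1\ge3$ zeros and contradicting Lemma~\ref{Lem-1res}.

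Parts (4) and (5) require instead a violation of the relative-order restriction of Theorem~\ref{relative-order} (equivalently condition~(ii) of Theorem~\ref{MainTheorem}), read off through Theorem~\ref{main-orientation}; here the \emph{critical vertices} of a type-$C$ vertex $x$ are the two clique vertices $p_{|I_x|}$ and $p_{m-|O_x|+1}$ of that theorem, and types A, B, C are those of Theorem~\ref{semi-tran-groups}. For (4): from $0(-1)^{n-1}\in R(A)$ and $\gamma_i=(-1)^n$ we obtain a type-$A$ (source) vertex with row pattern $0(-1)^{n^2-1}$, connected to all of $p_2,\dots,p_{n^2}$; from $1^{n-1}0\in R(A)$ and the forced $\beta_i=1^n$ we obtain a type-$C$ vertex with row pattern $1^{n^2-n}0(-1)^{n-1}$, whose critical vertices $p_{n^2-n},p_{n^2-n+2}$ both lie in $\{p_2,\dots,p_{n^2}\}$; hence the type-$A$ vertex is connected to both of them, contradicting Theorem~\ref{relative-order}.

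For (5) the proof splits by the relative position of $p$ and $q$ (write ``pivot'' for the single $0$ of a row $1^a0(-1)^c$). If $1\le p$ and $p+2\le q\le n-2$, then already in $G_o^1(A,B,C)$ the set $O$ of the type-$C$ vertex $1^p0(-1)^{n-p-1}$ contains both critical vertices $p_q,p_{q+2}$ of the type-$C$ vertex $1^q0(-1)^{n-q-1}$, contradicting Theorem~\ref{relative-order}; the boundary case $p=0$, $q=n-1$ is exactly part (4); and if $q=n-1$ with $p\le n-3$, the type-$B$ vertex $1^{n-1}0$ is connected to the critical vertices $p_p,p_{p+2}$ of $1^p0(-1)^{n-p-1}$, again a level-$1$ violation. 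In the remaining ``adjacent/boundary'' sub-cases ($q=p+1$; or $p=0$ with $q=1$; or $q=n-1$ with $p=n-2$) the pivots are too close for a level-$1$ obstruction, so one passes to $M^2(A,B,C)$: the $1^n$/$(-1)^n$ rows of $B,C$ forced by the engine yield in $M^2(A,B,C)$ either two type-$C$ row patterns whose pivots are now far apart (so both critical vertices of one lie inside the $I$- or $O$-set of the other) or a long type-$A$/type-$B$ vertex spanning both critical vertices of a type-$C$ vertex, in each case violating Theorem~\ref{relative-order} at level $2$; for small $n$ most of these sub-cases are vacuous or collapse to part (4), and throughout, Corollaries~\ref{010 and 10-1}--\ref{0-10 and 10-1} dispose of, and Lemma~\ref{Lem-no0row} reduces away, any all-$1$ or all-$(-1)$ rows occurring in $A,B,C$ or in $M^2(A,B,C)$, so that the maximal clique and its longest path are the obvious ones when Theorems~\ref{relative-order} and~\ref{main-orientation} are applied. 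The step I expect to be the main obstacle is precisely this bookkeeping in the adjacent/boundary sub-cases of part (5): tracking which sub-rows of $M^2(A,B,C)$ are forced, computing their critical clique vertices, and selecting the correct witnessing vertex for the failure of Theorem~\ref{relative-order}, while keeping straight which values of $n$ make a given sub-case genuinely occur rather than reduce to part (4) or to the generic type-$C$ argument; a secondary, pervasive nuisance is applying the reductions of Lemma~\ref{Lem-no0row} uniformly.
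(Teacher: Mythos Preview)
Your treatment of parts $(1)$--$(3)$ is correct and essentially coincides with the paper's argument (up to symmetric choices of which block and which row index to read).

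There is, however, a genuine gap in your handling of part $(4)$. Your ``engine'', applied to the row $0(-1)^{n-1}$ at index $i$ and to the row $1^{n-1}0$ at index $j$, forces only $\gamma_i=(-1)^n$ and $\beta_j=1^n$. These yield the type-$A$ row $0(-1)^{n^2-1}$ (row $i$ of $AC^{n-1}$) and the type-$B$ row $1^{n^2-1}0$ (row $j$ of $B^{n-1}A$), but \emph{no} type-$C$ row. The pattern $1^{n^2-n}0(-1)^{n-1}$ you invoke is row $i$ of $B^{n-1}A$, which requires $\beta_i=1^n$; your engine does not give this. The fix is one more line: row $i$ of $B^{n-1}A$ equals $\beta_i^{\,n-1}0(-1)^{n-1}$, and admissibility with at most one $0$ forces $\beta_i=1^n$. (The paper instead reads row $j$ of $AC^{n-1}$, forcing $\gamma_j=(-1)^n$ and producing $1^{n-1}0(-1)^{n^2-n}$; either cross-term works, but neither is covered by your engine as stated.)

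In part $(5)$ your case analysis is incomplete: the range $p=0$, $2\le q\le n-2$ falls in none of your listed cases (generic requires $p\ge1$; the $q=n-1$ case does not apply; your adjacent/boundary list does not include it). It is dispatched at level $1$ by the mirror of your type-$B$ argument: the type-$A$ vertex $0(-1)^{n-1}$ is connected to $p_2,\dots,p_n$, which contains the critical vertices $p_q,p_{q+2}$ of the type-$C$ vertex $1^q0(-1)^{n-q-1}$.

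More importantly, your approach to $(5)$ is genuinely different from the paper's and, as you anticipate, pays for its level-$1$ shortcuts with awkward level-$2$ bookkeeping. The paper never splits into generic versus adjacent cases. It works uniformly at level $2$: with $x=\beta_i$, $y=\gamma_i$ it reads row $i$ of the block $B^pAC^{n-p-1}$ (forcing $x=1^n$, $y=(-1)^n$) and then row $i$ of the block $B^qAC^{n-q-1}$, obtaining two rows of $M^2(A,B,C)$ whose single zeros sit at positions $np+p+1$ and $nq+p+1$, hence at distance $n(q-p)\ge n\ge2$. These two rows then violate condition~(ii) of Theorem~\ref{MainTheorem} (after one application of Lemma~\ref{Lem-no0row} if some row of $M^2$ has no $0$), with the sole exception $p=0$, $q=n-1$, which is exactly part $(4)$. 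This uniform computation replaces your entire adjacent/boundary discussion and sidesteps the missing $p=0$ sub-case.
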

\begin{proof} \
	\begin{itemize}
		\item[$(1)$] Suppose that $\IST(A,B,C)= \infty$ and row $i$ and row $j$ in $A$ are $01^{n-1}$ and $1^{n-1}0$, respectively. Note that $B^{n-1}A$ gives $m$ consecutive rows in $M^2(A,B,C)$ obtained by applying the morphism to $1^{n-1}0$. Row $i$ in $B^{n-1}A$ is $x^{n-1}01^{n-1}$, where $x$ is row $i$ in $B$. Since $A$ is not a layered matrix, by Lemma~\ref{Lem-1res}, there is no $0$ in $x$. So $x^{n-1}01^{n-1}$ cannot be of the form $0^r1^s0^t$, $0^r(-1)^s0^t$ or $1^r0^s(-1)^t$. This contradicts to   Corollary~\ref{CorMainTheorem=>}.
			
		\item[$(2)$] Suppose that $\IST(A,B,C)= \infty$ and row $i$ and row $j$ in $A$ are $0(-1)^{n-1}$ and $(-1)^{n-1}0$, respectively. Note that $AC^{n-1}$ gives $m$ consecutive rows in $M^2(A,B,C)$ obtained by applying the morphism to $0(-1)^{n-1}$. Row $j$ in $AC^{n-1}$ is $(-1)^{n-1}0x^{n-1}$, where $x$ is row $j$ in $B$. Since $A$ is not a layered matrix, by Lemma~\ref{Lem-1res}, there is no $0$ in $x$. So $(-1)^{n-1}0x^{n-1}$ cannot be of the form $0^r1^s0^t$, $0^r(-1)^s0^t$ or $1^r0^s(-1)^t$. This contradicts to  Corollary~\ref{CorMainTheorem=>}.
			
		\item[$(3)$] Suppose that $\IST(A,B,C)= \infty$ and row $i$ and row $j$ in $A$ are $01^{n-1}$ and $(-1)^{n-1}0$, respectively. Note that $AB^{n-1}$ gives $m$ consecutive rows in $M^2(A,B,C)$ obtained by applying the morphism to $01^{n-1}$. Row $j$ in $AB^{n-1}$ is $(-1)^{n-1}0x^{n-1}$, where $x$ is row $j$ in $B$. Note that $(-1)^{n-1}0x^{n-1}$ must be of the form $0^r(-1)^s0^t$, and so $x=0^n$. Thus, $(-1)^{n-1}0x^{n-1} = (-1)^{n-1}0^{(n^2-n-1)}$ is a row in $M^2(A,B,C)$ having more than one $0$, which contradicts to Lemma~\ref{Lem-1res}.
			
		\item[$(4)$] \label{(4)} Suppose that $\IST(A,B,C)= \infty$ and row $i$ and row $j$ in $A$ are $0(-1)^{n-1}$ and $1^{n-1}0$, respectively. Note that $AC^{n-1}$ gives $m$ consecutive rows in $M^2(A,B,C)$ obtained by applying the morphism to $0(-1)^{n-1}$. Row $j$ in $AC^{n-1}$ is $1^{n-1}0x^{n-1}$, where $x$ is row $j$ in $C$. Since $A$ is not a layered matrix, by Lemma~\ref{Lem-1res}, there is no $0$ in $x$. Therefore, $1^{n-1}0x^{n-1}$ is of the form $1^r0^s(-1)^t$. So $x=(-1)^n$ and $1^{n-1}0x^{n-1} = 1^{n-1}0(-1)^{n^2-n}$ is a row in $M^2(A,B,C)$. Note that $B^{n-1}A$ gives $m$ consecutive rows in $M^2(A,B,C)$ obtained by application of the morphism to $1^{n-1}0$. Row $j$ in $B^{n-1}A$ is $y^{n-1}1^{n-1}0$, where $y$ is row $j$ in $B$. Since $A$ is not a layered matrix, by Lemma~\ref{Lem-1res}, there is no $0$ in $y$. Therefore, $y^{n-1}1^{n-1}0$ is of the form $0^r1^s0^t$. So $y=1^n$ and $y^{n-1}1^{n-1}0 = 1^{n^2-1}0$ is a row in $M^2(A,B,C)$.
		If every row of $M^2(A,B,C)$ has a $0$, then $1^{n-1}0(-1)^{n^2-n-1}$ and $1^{n^2-1}0$ break the second condition of Theorem~\ref{MainTheorem}. Hence, $G_o^2(A,B,C)$ is not semi-transitive and this leads to a contradiction. Suppose that $1^{\ell}(-1)^{n^2-\ell}$ is a row of $M^2(A,B,C)$ for some $0 \leq \ell \leq n^2$. By Lemma~\ref{Lem-no0row}, we have $G_o^2(A,B,C)$ is isomorphic to $G_o(N)$ where $N$ is a matrix obtained by deleting row $1^{\ell}(-1)^{n^2-\ell}$ from $M^2(A,B,C)$ and after that adding a zero column between the $\ell$-th and the $(\ell+1)$-th columns. So, $G_o(N)$ is semi-transitive. Note that $\ell$ must be $n-1$ or $n$, otherwise the row of $N$ obtained by adding a zero to $1^{n-1}0(-1)^{n^2-n-1}$ is not of the form $1^r0^s(-1)^t$. Then row of $N$ obtained by adding a zero to $1^{n^2-1}0$ in between the $\ell$-th and the $(\ell+1)$-th positions is not of the form $0^r1^s0^t$ or $0^r(-1)^s0^t$ or $1^r0^s(-1)^t$. Hence, by Corollary~\ref{CorMainTheorem=>}, $G_o(N)$ is not semi-transitive, which is a contradiction.

		\item[$(5)$] Suppose that $\IST(A,B,C)= \infty$ and row $i$ and row $j$ in $A$ are $1^p0(-1)^{n-p-1}$ and $1^q0(-1)^{n-q-1}$, respectively, where $0 \leq p < q \leq n-1$. Note that $B^pAC^{n-p-1}$ gives $m$ consecutive rows in $M^2(A,B,C)$ obtained by applying the morphism to $1^p0(-1)^{n-p-1}$. Row $i$  in $B^pAC^{n-p-1}$ is $x^{p}1^p0(-1)^{n-p-1}y^{n-p-1}$ where $x$ is row $i$ in $B$ and $y$ is row $i$ in $C$. Since $A$ is not a layered matrix, by Lemma~\ref{Lem-1res}, there is no more than one $0$ in any row of $M^2(A,B,C)$. By Corollar~\ref{CorMainTheorem=>}, we obtain $x^{p}1^p0(-1)^{n-p-1}y^{n-p-1}$ equals  $1^{np+p}0(-1)^{n^2-np-p-1}$. Note that $B^qAC^{n-q-1}$ gives $m$ consecutive rows in $M^2(A,B,C)$ obtained by application of the morphism to $1^q0(-1)^{n-q-1}$, and $x^{q}1^q0(-1)^{n-q-1}y^{n-q-1}$ is its row $i$. Similarly to the above, we have $x^{q}1^q0(-1)^{n-q-1}y^{n-q-1} = 1^{nq+q}0(-1)^{n^2-nq-q-1}$. That is, both $1^{np+p}0(-1)^{n^2-np-p-1}$ and $1^{nq+q}0(-1)^{n^2-nq-q-1}$ are rows of $M^2(A,B,C)$. If $p=0$ and $q=n-1$, then $0(-1)^{n-1}$ and $1^{n-1}0$ are rows in $A$ which is a contradiction by (4). So, one of $1^{np+p}0(-1)^{n^2-np-p-1}$ and $1^{nq+q}0(-1)^{n^2-nq-q-1}$ is of the form $1^r0^s(-1)^t$ for some $r,s,t>0$. If every row of $M^2(A,B,C)$ has a $0$, then $G_o^2(A,B,C)$ is not semi-transitive because the second condition of Theorem~\ref{MainTheorem} is not satisfied, and this is a contradiction. Suppose that $1^{\ell}(-1)^{n^2-\ell}$ is a row of $M^2(A,B,C)$ for some $0 \leq \ell \leq n^2$. By Lemma~\ref{Lem-no0row}, we have $G_o^2(A,B,C)$ is isomorphic to $G_o(N)$ where $N$ is a matrix obtained by deleting row $1^\ell(-1)^{n^2-\ell}$ from $M^2(A,B,C)$ and after that adding a zero column between the $\ell$-th and the $(\ell+1)$-th columns. So, $G_o(N)$ is semi-transitive. Note that $\ell$ must be $np+p$ or $np+p+1$, otherwise the row of $N$ obtained by adding a zero to $1^{np+p}0(-1)^{n^2-np-p-1}$ is not of the form $1^r0^s(-1)^t$. Similarly, $\ell$ must be $nq+q$ or $nq+q+1$, otherwise the row of $N$ obtained by adding a zero to $1^{nq+q}0(-1)^{n^2-nq-q-1}$ is not of the form $1^r0^s(-1)^t$. As $np+p$, $np+p+1$, $nq+q$ and $nq+q+1$ are all distinct, we have a contradiction

\end{itemize}
\vspace{-0.5cm}\end{proof}

	\begin{lemma} \label{Lem-4properties}
		Let $A$, $B$, $C$ be $m \times n$ matrices over $\{-1,0,1\}$ such that $A$ has a $0$ and $(A,B,C)$ is not independent from $B$ and $C$. Then,
		\begin{itemize}
			\item[$(1)$] if $1^p0(-1)^{n-p-1}$ and $01^{n-1}$ are rows in $A$, where $1 \leq p \leq n-2$, then $\IST(A,B,C)$ is finite; 
			\item[$(2)$] if $1^p0(-1)^{n-p-1}$ and $0(-1)^{n-1}$ are rows in $A$, where $1 \leq p \leq n-2$, then $\IST(A,B,C)$ is finite; 
			\item[$(3)$] if $1^p0(-1)^{n-p-1}$ and $1^{n-1}0$ are rows in $A$, where $1 \leq p \leq n-2$, then $\IST(A,B,C)$ is finite; 
			\item[$(4)$] if $1^p0(-1)^{n-p-1}$ and $(-1)^{n-1}0$ are rows in $A$, where $1 \leq p \leq n-2$, then $\IST(A,B,C)$ is finite.
		\end{itemize}
	\end{lemma}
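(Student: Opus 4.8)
The plan is to argue by contradiction in each of the four parts, following closely the proof of Lemma~\ref{Lem-5properties}. Assume $\IST(A,B,C)=\infty$. Since $1\le p\le n-2$, the row $1^p0(-1)^{n-p-1}$ of $A$ contains a $1$, a $0$ and a $-1$; hence $n\ge 3$ and $A$ is not layered, so by Lemma~\ref{Lem-1res} no row of any $M^k(A,B,C)$ contains two $0$'s. In each part the two prescribed rows of $A$ are clearly distinct, so we may write row $i$ of $A$ for $1^p0(-1)^{n-p-1}$ and row $j\ne i$ of $A$ for the second prescribed row. The first step, common to all four parts, pins down rows $i$ of $B$ and $C$: applying the morphism to row $i$ of $A$ produces the $m$ consecutive rows of $M^2(A,B,C)$ forming the horizontal block $B^pAC^{n-p-1}$, and the $i$-th of these equals $x^p\,1^p0(-1)^{n-p-1}\,y^{n-p-1}$, where $x$ and $y$ are rows $i$ of $B$ and $C$, respectively. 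By Corollary~\ref{CorMainTheorem=>} this string is of the form $0^r1^s0^t$, $0^r(-1)^s0^t$ or $1^r0^s(-1)^t$; as it contains both a $1$ and a $-1$ and (by Lemma~\ref{Lem-1res}) exactly one $0$, it must be $1^r0^s(-1)^t$ with $s=1$, which forces $x=1^n$ and $y=(-1)^n$.

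For parts (1) and (4) a forbidden configuration already appears at the second iteration. In part (1), applying the morphism to the row $j=01^{n-1}$ of $A$ gives a block whose $i$-th row is $1^p0(-1)^{n-p-1}(1^n)^{n-1}$, in which a $1$ occurs to the right of a $-1$; by Remark~\ref{remark-no0not1(-1)}, $G_o^2(A,B,C)$ is not semi-transitive, contradicting $\IST(A,B,C)=\infty$. In part (4), the same construction applied to $j=(-1)^{n-1}0$ produces the row $((-1)^n)^{n-1}1^p0(-1)^{n-p-1}$, again with a $1$ to the right of a $-1$, and Remark~\ref{remark-no0not1(-1)} once more yields the contradiction.

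Parts (2) and (3) instead use condition (ii) of Theorem~\ref{MainTheorem}. In both, the $i$-th row of the block $B^pAC^{n-p-1}$ is the row $\rho:=1^{p(n+1)}0(-1)^{(n-p-1)(n+1)}$ of $M^2(A,B,C)$, which has the form $1^a0^b(-1)^c$ with $a=p(n+1)$, $b=1$, $c=(n-p-1)(n+1)>0$. In part (2) we apply the morphism to $j=0(-1)^{n-1}$ and, using $y=(-1)^n$, obtain the row $\sigma:=1^p0(-1)^{n-p-1}((-1)^n)^{n-1}=1^p0(-1)^{n^2-p-1}$ of $M^2(A,B,C)$, of the form $1^a0^b(-1)^c$ with $a=p$; since $n\ge 3$ and $p\ge 1$ one checks $p+2\le p(n+1)$, so positions $p$ and $p+2$ are both nonzero in $\rho$, and $\rho\ne\sigma$, whence $\sigma$ and $\rho$ violate condition (ii). In part (3) we apply the morphism to $j=1^{n-1}0$ and, using $x=1^n$, obtain the row $\tau:=(1^n)^{n-1}1^p0(-1)^{n-p-1}=1^{n^2-n+p}0(-1)^{n-p-1}$; using $p\le n-2$ one checks that positions $p(n+1)$ and $p(n+1)+2$ are both nonzero in $\tau$, and $\rho\ne\tau$, so condition (ii) fails again. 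In either case, if every row of $M^2(A,B,C)$ has a $0$ this contradicts Theorem~\ref{MainTheorem}; otherwise a row with no $0$ has the form $1^\ell(-1)^{n^2-\ell}$ by Remark~\ref{remark-no0not1(-1)}, and by Lemma~\ref{Lem-no0row} we may replace $G_o^2(A,B,C)$ by $G_o(N)$, where $N$ deletes that row and inserts a zero column between columns $\ell$ and $\ell+1$; the rows of $N$ coming from $\rho$ and $\sigma$ (resp. $\rho$ and $\tau$) then force $\ell$ to lie in two disjoint two-element sets (the positions adjacent to the unique $0$ of each), a contradiction, exactly as in the proof of Lemma~\ref{Lem-5properties}.

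The main technical point will be the coordinate bookkeeping in parts (2) and (3): confirming that the two coordinates singled out by condition (ii) for one of the rows $\rho,\sigma,\tau$ always fall among the nonzero entries of another such row, which is precisely where the bound $1\le p\le n-2$ (equivalently $n\ge 3$) is used, together with the by-now-standard clique-reduction step when $M^2(A,B,C)$ contains a row without a $0$ (handled via Lemma~\ref{Lem-no0row}, or via Corollaries~\ref{010 and 10-1} and \ref{0-10 and 10-1} when that row is all $1$'s or all $(-1)$'s). I do not expect any genuinely new idea beyond those already present in the proof of Lemma~\ref{Lem-5properties}.
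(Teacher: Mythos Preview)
Your argument is correct, but the route differs from the paper's in two places worth noting.

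For parts (2) and (3) the paper is much shorter: it simply observes that $0(-1)^{n-1}=1^00(-1)^{n-1}$ and $1^{n-1}0=1^{n-1}0(-1)^0$ are themselves of the form $1^q0(-1)^{n-q-1}$ (with $q=0$ and $q=n-1$ respectively), so both cases are immediate from part~(5) of Lemma~\ref{Lem-5properties}. You instead reprove that special case from scratch, including the Lemma~\ref{Lem-no0row} reduction; this is fine but redundant given that Lemma~\ref{Lem-5properties} is already available.

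For parts (1) and (4) the paper argues differently: rather than first determining row~$i$ of $B$ and $C$ and then looking at row~$i$ of the block coming from row~$j$, it looks directly at row~$j$ of the block $B^pAC^{n-p-1}$ coming from row~$i$. In~(1) this row is $b^p\,01^{n-1}\,c^{n-p-1}$, which (being of the form $0^r1^s0^t$) forces $b=0^n$, contradicting the single-$0$ constraint from Lemma~\ref{Lem-1res}; in~(4) the analogous row forces $c=0^n$. Your approach---producing an explicit row with a $1$ to the right of a $-1$ and invoking Remark~\ref{remark-no0not1(-1)}---is a genuinely different and arguably cleaner way to reach the contradiction, since it bypasses any analysis of the shape of $b$ or $c$ in the second block. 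Either method works; the paper's has the minor advantage that it does not need the preliminary step pinning down row~$i$ of $B$ and $C$.
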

	\begin{proof} \
	
		\begin{itemize}
			\item[$(1)$]  Suppose that $1^p0(-1)^{n-p-1}$ and $01^{n-1}$ are rows $i$ and $j$ in $A$, respectively, and $\IST(A,B,C)=\infty$. Note that $B^p A C^{n-p-1}$ gives $m$ consecutive rows in $M^2(A,B,C)$ obtained by applying the morphism to $1^p0(-1)^{n-p-1}$ in $M^1(A,B,C)$. Row $j$ in $B^p A C^{n-p-1}$ is $b^p01^{n-1}c^{n-p-1}$, where $b$ and $c$ are row $j$ in $B$ and $C$, respectively. So $b^p01^{n-1}c^{n-p-1}$ must be $0^r1^s0^t$ for some $r,s,t\geq 0$. Hence, $b=0^n$ and $c=1^n$. As $A$ is not a layered matrix, every row in $M^2(A,B,C)$ contains at most one $0$, which is a contradiction. Therefore, $\IST(A,B,C)<\infty$.
			
			\item[$(2)$]  This is given by (5) in Lemma~\ref{Lem-5properties}. 
			\item[$(3)$]  This is given by (5) in Lemma~\ref{Lem-5properties}. 
			\item[$(4)$]  Suppose that $1^p0(-1)^{n-p-1}$ and $(-1)^{n-1}0$ are rows $i$ and $j$ in $A$, respectively, and $\IST(A,B,C)=\infty$. Note that $B^p A C^{n-p-1}$ gives $m$ consecutive rows in $M^2(A,B,C)$ obtained by applying the morphism to $1^p0(-1)^{n-p-1}$ in $M^1(A,B,C)$. Row $j$ in $B^p A C^{n-p-1}$ is $b^p(-1)^{n-1}0c^{n-p-1}$, where $b$ and $c$ are row $j$ in $B$ and $C$, respectively. So, $b^p(-1)^{n-1}0c^{n-p-1}$ must be $0^r(-1)^s0^t$ for some $r,s,t\geq 0$. Hence, $b=(-1)^n$ and $c=0^n$. As $A$ is not a layered matrix, every row in $M^2(A,B,C)$ contains at most one $0$, which is a contradiction. Therefore, $\IST(A,B,C)<\infty$.
\end{itemize}
\vspace{-0.5cm}	
\end{proof}

\begin{definition}	Let $A,B,C$ be $m \times n$ matrices over $\{-1,0,1\}$. A triple $(A,B,C)$ is {\em left-$0$-invariant} if $A$, $B$, $C$ satisfy the following properties:
	\begin{itemize}
		\item every row in $A$ is in $\{01^{n-1}, 1^n, 0(-1)^{n-1}, (-1)^n\}$;
		\item every row in $B$ and $C$ is in $\{1^n,(-1)^n\}$;
		\item if $01^{n-1}$ appears as a row in $A$, then 
		\begin{itemize}
			\item row $i$ in $A$ is $01^{n-1}$ implies row $i$ in $B$ is $1^n$; 
			\item row $i$ in $A$ is $1^n$ implies row $i$ in $B$ is $1^n$;
			\item row $i$ in $A$ is $0(-1)^{n-1}$ implies row $i$ in $B$ is $(-1)^n$; 
			\item row $i$ in $A$ is $(-1)^n$ implies  row $i$ in $B$ is $(-1)^n$; 
		\end{itemize}
		\item if $0(-1)^{n-1}$ appears as a row in $A$, then 
		\begin{itemize}
			\item row $i$ in $A$ is $01^{n-1}$ implies row $i$ in $C$ is $1^n$; 
			\item row $i$ in $A$ is $1^n$ implies row $i$ in $C$ is $1^n$;
			\item row $i$ in $A$ is $0(-1)^{n-1}$ implies row $i$ in $C$ is $(-1)^n$; 
			\item row $i$ in $A$ is $(-1)^n$ implies row $i$ in $C$ is $(-1)^n$.
		\end{itemize}
	\end{itemize}
\end{definition}

\begin{definition}
	Let $A$, $B$, $C$ be $m \times n$ matrices over $\{-1,0,1\}$. A triple $(A,B,C)$ is {\em right-$0$-invariant} if $A$, $B$, $C$ satisfy the following properties:
	\begin{itemize}
		\item every row in $A$ is in $\{1^{n-1}0, 1^n, (-1)^{n-1}0,(-1)^n\}$;
		\item every row of $B$ and $C$ is in $\{1^n,(-1)^n\}$;
		\item if $1^{n-1}0$ appears as a row in $A$, then 
		\begin{itemize}
			\item row $i$ in $A$ is $1^{n-1}0$ implies row $i$ in $B$ is $1^n$; 
			\item row $i$ in $A$ is $1^n$ implies row $i$ in $B$ is $1^n$; 
			\item row $i$ in $A$ is $(-1)^{n-1}0$ implies row $i$in $B$ is $(-1)^n$; 
			\item row $i$ in $A$ is $(-1)^n$ implies row $i$ in $B$ is $(-1)^n$; 
		\end{itemize}
		\item if $(-1)^{n-1}0$ appears as a row in $A$, then 
		\begin{itemize}
			\item row $i$ in $A$ is $1^{n-1}0$ implies row $i$ in $C$ is $1^n$; 
			\item row $i$ in $A$ is $1^n$ implies row $i$ in $C$ is $1^n$; 
			\item row $i$ in $A$ is $(-1)^{n-1}0$ implies  row $i$ in $C$ is $(-1)^n$; 
			\item row $i$ in $A$ is $(-1)^n$ implies  row $i$ in $C$ is $(-1)^n$. 
		\end{itemize}
	\end{itemize}
\end{definition}

\begin{lemma} \label{lem-no0(-1)in-leftright-0-invariant}
	Let $A$, $B$, $C$ be $m \times n$ matrices over $\{-1,0,1\}$ such that $A$ has a $0$. Then,
	\begin{itemize}
		\item[$(1)$] If $(A,B,C)$ is left-$0$-invariant and  $01^{n-1} \notin R(A)$,\\ then $01^{n^k-1} \notin R^k(A,B,C)$ for any $k\geq 0$.
		\item[$(2)$] If $(A,B,C)$ is left-$0$-invariant and  $0(-1)^{n-1} \notin R(A)$,\\ then $0(-1)^{n^k-1} \notin R^k(A,B,C)$ for any $k>0$.
		\item[$(3)$] If $(A,B,C)$ is right-$0$-invariant and  $1^{n-1}0 \notin R(A)$,\\ then $1^{n^k-1}0 \notin R^k(A,B,C)$ for any $k>0$.
		\item[$(4)$] If $(A,B,C)$ is right-$0$-invariant and  $(-1)^{n-1}0 \notin R(A)$,\\ then $(-1)^{n^k-1}0 \notin R^k(A,B,C)$ for any $k>0$.
	\end{itemize}
\end{lemma}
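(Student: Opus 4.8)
The plan is to handle all four parts by one and the same one-step argument, exploiting the block structure of the morphism. Recall that $M^{k}(A,B,C)$ is obtained from $M^{k-1}(A,B,C)$ by substituting $A$ for every $0$, $B$ for every $1$ and $C$ for every $-1$. Hence, for $k\ge 1$, a row of $M^{k}(A,B,C)$ is obtained from a row of $M^{k-1}(A,B,C)$, with row pattern $w=w_1w_2\cdots w_{n^{k-1}}\in R^{k-1}(A,B,C)$, by choosing an index $i\in\{1,\ldots,m\}$ (the row inside each substituted block) and forming $\varphi_i(w_1)\varphi_i(w_2)\cdots\varphi_i(w_{n^{k-1}})$, where $\varphi_i(0)$, $\varphi_i(1)$, $\varphi_i(-1)$ denote, respectively, the $i$-th rows of $A$, $B$, $C$ (each a string of length $n$). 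Thus
\[
R^{k}(A,B,C)=\bigcup_{i=1}^{m}\varphi_i\bigl(R^{k-1}(A,B,C)\bigr),
\]
and every string in $R^{k}(A,B,C)$ splits into $n^{k-1}$ consecutive blocks of length $n$, the first being $\varphi_i(w_1)$ and the last being $\varphi_i(w_{n^{k-1}})$. The feature I would use is that in a left-$0$-invariant (resp.\ right-$0$-invariant) triple every row of $B$ and of $C$ is $1^n$ or $(-1)^n$, so $\varphi_i(1),\varphi_i(-1)\in\{1^n,(-1)^n\}$ for all $i$; in particular no block coming from a $1$ or a $-1$ begins (resp.\ ends) with a $0$.

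For $(1)$ I would argue by contradiction: suppose $01^{n^{k}-1}\in R^{k}(A,B,C)$ for some $k\ge 1$, say $01^{n^{k}-1}=\varphi_i(w)$ with $w\in R^{k-1}(A,B,C)$. Its length-$n$ prefix is $01^{n-1}$, so $\varphi_i(w_1)=01^{n-1}$. Since $\varphi_i(1)$ and $\varphi_i(-1)$ lie in $\{1^n,(-1)^n\}$ and do not begin with a $0$, we must have $w_1=0$, hence $\varphi_i(0)=01^{n-1}$, i.e.\ $01^{n-1}\in R(A)$, contradicting the hypothesis. (For $k=1$ this is just the degenerate case $w=0$ with $\varphi_i(0)$ the whole string; for $k=0$ the written string $01^{n^{0}-1}$ is the single letter $0$, which does lie in $R^{0}=\{0\}$, so part $(1)$ is really a statement about $k\ge 1$ like the others.) Part $(2)$ is literally the same, with the length-$n$ prefix now equal to $0(-1)^{n-1}$ and the forbidden row of $A$ being $0(-1)^{n-1}$.

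Parts $(3)$ and $(4)$ are the mirror images of $(1)$ and $(2)$: instead of the length-$n$ prefix I would inspect the length-$n$ suffix of the putative row, using right-$0$-invariance. If $1^{n^{k}-1}0\in R^{k}(A,B,C)$ then this suffix is $1^{n-1}0=\varphi_i(w_{n^{k-1}})$; as rows of $B$ and $C$ never end with $0$, we get $w_{n^{k-1}}=0$ and $1^{n-1}0\in R(A)$, a contradiction. The case of $(-1)^{n^{k}-1}0$ is identical, with suffix $(-1)^{n-1}0$ and forbidden row $(-1)^{n-1}0$.

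The only step that really needs care---and the one I would write out in full---is the combinatorial bookkeeping in the first paragraph: that the substitution is uniform and, crucially, that each symbol expands to a block of length \emph{exactly} $n$, so that the length-$n$ prefix (resp.\ suffix) of a row of $M^{k}(A,B,C)$ is precisely $\varphi_i$ applied to the first (resp.\ last) symbol of the underlying row of $M^{k-1}(A,B,C)$. Once that is in place each of the four cases is a two-line argument, and no genuine induction on $k$ is needed; note that the clauses in the definitions of left-/right-$0$-invariance that tie the rows of $A$ to those of $B$ and $C$ are not used here---they will only be needed for the semi-transitivity statements that invoke this lemma.
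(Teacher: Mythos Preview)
Your argument is correct and is essentially the same one-step block argument the paper uses: both look at the length-$n$ prefix (or suffix) of a putative row in $M^{k}(A,B,C)$, observe that rows of $B$ and $C$ are in $\{1^n,(-1)^n\}$ and hence cannot supply that block, and conclude that the corresponding row of $A$ would have to be the forbidden pattern. You are in fact more explicit than the paper about \emph{why} the first symbol of the parent row must be $0$, and you are right to flag the $k=0$ glitch in part~$(1)$ and that the row-tying clauses in the invariance definitions play no role here.
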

\begin{proof}
	As all of the statements are proved in similar ways, we will only prove $(1)$. Assume $(A,B,C)$ is left-$0$-invariant and  $01^{n-1} \notin R(A)$. For $k=1$, it is obvious that $M^1(A,B,C)=A$ and then $01^{n-1} \notin R^1(A,B,C)$. Suppose $k \geq 2$ and $01^{n^k-1} \in R^k(A,B,C)$. Let $0x_1x_2 \cdots x_{n^{k-1}-1}$ be a row in $M^{k-1}(A,B,C)$ such that applying to it the morphism creates row $01^{n^k-1}$. That is, $01^{n^k-1}$ is a row in the matrix $AX_1X_2 \cdots X_{n^{k-1}-1}$, where $X_i \in \{A,B,C\}$, obtained from $0x_1x_2 \cdots x_{n^{k-1}-1}$ by application of the morphism. This is a contradiction because $01^{n-1} \notin R(A)$. Hence, $01^{n^k-1} \notin R^k(A,B,C)$.
\end{proof}

\begin{lemma} \label{lem-leftright-0-invariant}
	Let $A$, $B$, $C$ be $m \times n$ matrices over $\{-1,0,1\}$ such that $A$ has a $0$. If $(A,B,C)$ is left-$0$-invariant (resp., right-$0$-invariant), then $\IST(A,B,C)= \infty$.
\end{lemma}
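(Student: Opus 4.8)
The plan is to prove the stronger statement that, if $(A,B,C)$ is left-$0$-invariant, then every row of $M^k(A,B,C)$ belongs to $\mathcal{L}_k:=\{01^{n^k-1},\,1^{n^k},\,0(-1)^{n^k-1},\,(-1)^{n^k}\}$ for every $k\ge 1$, and then to conclude. Indeed, each pattern in $\mathcal{L}_k$ has one of the forms $0^r1^s0^t$ or $0^r(-1)^s0^t$ with $r,s,t\ge 0$, so Corollary~\ref{corr-MainTheorem} immediately gives that $G_o^k(A,B,C)$ is semi-transitive for all $k\ge 1$ (and for $k=0$ by Remark~\ref{M=0}), i.e.\ $\IST(A,B,C)=\infty$. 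The right-$0$-invariant case is the mirror image under reversing all rows (replacing leading $0$'s by trailing $0$'s and using parts $(3)$--$(4)$ of Lemma~\ref{lem-no0(-1)in-leftright-0-invariant} in place of $(1)$--$(2)$), so I would only carry out the left case.

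The claim is proved by induction on $k$. For $k=1$ it is just the definition: $M^1(A,B,C)=A$ and every row of $A$ lies in $\{01^{n-1},1^n,0(-1)^{n-1},(-1)^n\}=\mathcal{L}_1$. For the inductive step, pick a row $r$ of $M^{k-1}(A,B,C)$; by hypothesis $r\in\mathcal{L}_{k-1}$, and the $m$ rows of $M^k(A,B,C)$ obtained from $r$ under one step of the morphism are the concatenations $(\text{row }i\text{ of }X_1)\cdots(\text{row }i\text{ of }X_{n^{k-1}})$ for $i=1,\dots,m$, where $X_j$ is $A$, $B$ or $C$ according as the $j$-th entry of $r$ is $0$, $1$ or $-1$. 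If $r=1^{n^{k-1}}$, every $X_j=B$; since each row of $B$ is $1^n$ or $(-1)^n$, the new rows are $1^{n^k}$ or $(-1)^{n^k}$, both in $\mathcal{L}_k$. The case $r=(-1)^{n^{k-1}}$ is identical with $C$ replacing $B$.

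The two remaining cases, $r=01^{n^{k-1}-1}$ and $r=0(-1)^{n^{k-1}-1}$, are where left-$0$-invariance does real work, and where Lemma~\ref{lem-no0(-1)in-leftright-0-invariant} (proved beforehand precisely for this purpose) is used: by the contrapositive of its part $(1)$ (resp.\ $(2)$), having $01^{n^{k-1}-1}\in R^{k-1}(A,B,C)$ (resp.\ $0(-1)^{n^{k-1}-1}\in R^{k-1}(A,B,C)$) --- legitimate since $k-1\ge 1$ --- forces $01^{n-1}\in R(A)$ (resp.\ $0(-1)^{n-1}\in R(A)$), so the corresponding conditional block in the definition of left-$0$-invariance is in force. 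In the first case $X_1=A$ and $X_2=\cdots=X_{n^{k-1}}=B$, so row $i$ of $M^k(A,B,C)$ equals $(\text{row }i\text{ of }A)(\text{row }i\text{ of }B)^{\,n^{k-1}-1}$; the active clause guarantees that row $i$ of $B$ is $1^n$ whenever row $i$ of $A$ is $01^{n-1}$ or $1^n$, and is $(-1)^n$ whenever row $i$ of $A$ is $0(-1)^{n-1}$ or $(-1)^n$, so the concatenation is respectively $01^{n^k-1}$, $1^{n^k}$, $0(-1)^{n^k-1}$, $(-1)^{n^k}$, all in $\mathcal{L}_k$. The case $r=0(-1)^{n^{k-1}-1}$ is the same argument with $C$ in place of $B$. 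This closes the induction, and the lemma follows.

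The main obstacle, such as it is, is bookkeeping rather than conceptual: one must invoke Lemma~\ref{lem-no0(-1)in-leftright-0-invariant} at the correct exponent to ``activate'' the conditional clauses, and one should observe that if $A$ contains only one of $01^{n-1}$, $0(-1)^{n-1}$ (it contains at least one, since $A$ has a $0$), then the other of the two hard cases above simply never arises --- again by Lemma~\ref{lem-no0(-1)in-leftright-0-invariant} --- so no compatibility condition is vacuously needed.
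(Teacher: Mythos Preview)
Your proof is correct and follows essentially the same approach as the paper: an induction on $k$ showing $R^k(A,B,C)\subseteq\{01^{n^k-1},1^{n^k},0(-1)^{n^k-1},(-1)^{n^k}\}$, invoking Lemma~\ref{lem-no0(-1)in-leftright-0-invariant} to ensure the relevant conditional clauses of left-$0$-invariance are active, and concluding via Corollary~\ref{corr-MainTheorem}. The only difference is organisational---you split cases by the row pattern $r\in\mathcal{L}_{k-1}$, whereas the paper splits cases by which of $01^{n-1},0(-1)^{n-1}$ lie in $R(A)$---and your version is arguably the cleaner of the two.
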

\begin{proof}
	Suppose that $(A,B,C)$ is left-$0$-invariant. We will prove that for any $k>0$, $R^k(A,B,C) \subseteq \{01^{n^k-1}, 1^{n^k}, 0(-1)^{n^k-1}, (-1)^{n^k} \}$ by induction on $k$. From  the definition of a left-$0$-invariant tripple, we have that $R^1(A,B,C) = R(A) \subseteq \{01^{n-1}, 1^n, 0(-1)^{n-1}, (-1)^n \}$. Suppose $R^k(A,B,C) \subseteq \{01^{n^k-1}, 1^{n^k}, 0(-1)^{n^k-1}, (-1)^{n^k} \}$ for some $k$. If $01^{n-1} \notin R(A)$, then $0(-1)^{n-1} \in R(A)$ and, by Lemma~\ref{lem-no0(-1)in-leftright-0-invariant}, $01^{n^k-1} \notin R^k(A,B,C)$. So, every row in $M^k(A,B,C)$ is $1^{n^k}$, $0(-1)^{n^k-1}$ or $(-1)^{n^k}$. As every row in $M^{k+1}(A,B,C)$ is a row in an $m \times n^{k+1}$ matrix obtained by applying the morphism to a row in $M^k(A,B,C)$, we have that $$R^{k+1}(A,B,C)= R(B^{n^k}) \cup R(AC^{n^k-1}) \cup R(C^{n^k}).$$ We can see that $R(B^{n^k})$ and $R(C^{n^k})$ are subset of $\{1^{n^{k+1}}, (-1)^{n^{k+1}}\}$. Row $i$ in $AC^{n^k-1}$ is $1^{n^{k+1}}$, $(-1)^{n^{k+1}}$ and $0(-1)^{n^{k+1}-1}$ if row $i$ in $A$ is $1^n$, $(-1)^n$ and $0(-1)^n$, respectively. Hence, $R^{k+1}(A,B,C) \subseteq \{1^{n^{k+1}}, 0(-1)^{n^{k+1}-1}, (-1)^{n^{k+1}} \}$ in the case of $01^{n-1} \notin R(A)$. For the case of $0(-1)^{n-1} \notin R(A)$, we can follow similar arguments to see that $R^{k+1}(A,B,C) \subseteq \{01^{n^{k+1}-1}, 1^{n^{k+1}}, (-1)^{n^{k+1}} \}$. Assume both $01^{n-1}$ and $0(-1)^{n-1}$ are in $R(A)$. So, every row in $M^k(A,B,C)$ is $01^{n^k-1}$, $1^{n^k}$, $0(-1)^{n^k-1}$ or $(-1)^{n^k}$ and  $$R^{k+1}(A,B,C)=R(AB^{n^k-1}) \cup R(B^{n^k})  \cup R(AC^{n^k-1}) \cup R(C^{n^k}). $$ Note that $R(B^{n^k}), R(C^{n^k}) \subseteq \{1^{n^{k+1}}, (-1)^{n^{k+1}}\}$. Row $i$ in $AC^{n^k-1}$ is $1^{n^{k+1}}$, $(-1)^{n^{k+1}}$, $01^{n^{k+1}-1}$ and $0(-1)^{n^{k+1}-1}$ if row $i$ in $A$ is $1^n$, $(-1)^n$, $01^n$ and $0(-1)^n$, respectively. Row $i$ in $AB^{n^k-1}$ is $1^{n^{k+1}}$, $(-1)^{n^{k+1}}$, $01^{n^{k+1}-1}$ and $0(-1)^{n^{k+1}-1}$ if row $i$ in $A$ is $1^n$, $(-1)^n$, $01^n$ and $0(-1)^n$, respectively. Hence, $R^{k+1}(A,B,C) \subseteq \{1^{n^{k+1}}, 0(-1)^{n^{k+1}-1}, (-1)^{n^{k+1}} \}$. Thus, we have shown that, for any $k>0$, $$R^k(A,B,C) \subseteq \{01^{n^k-1}, 1^{n^k}, 0(-1)^{n^k-1}, (-1)^{n^k} \}.$$ By Corollary~\ref{corr-MainTheorem}, $G_o^k(A,B,C)$ is semi-transitive for any $k>0$, which means that $\IST(A,B,C)= \infty$.
\end{proof}

\begin{theorem} \label{thm-notIndeoendent-A=NL-B,C=L}
	Let $A$, $B$, $C$ be $m \times n$ matrices over $\{-1,0,1\}$ such that $A$ has a $0$ and $(A,B,C)$ is not independent from $B$ and $C$. Suppose $A$ is not a layered matrix but $B$ and $C$ are layered matrices. Then, $\IST(A,B,C)= \infty$ if and only if one of the following conditions holds:
	\begin{itemize}
		\item $(A,B,C)$ is left-$0$-invariant.
		\item $(A,B,C)$ is right-$0$-invariant.
		\item $R(A)=\{1^p01^{n-p-1}\}$
		for some $p \in \{1,2,\ldots, n-2\}$, 
		and $B$ and $C$ are all $1$ and $(-1)$ matrices, respectively.
	\end{itemize}
\end{theorem}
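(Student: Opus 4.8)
The plan is to prove the stated equivalence in two directions: the backward one is short, and the forward one (assuming $\IST(A,B,C)=\infty$) is carried out by a case analysis on the rows of $A$ fed by Lemmas~\ref{Lem-5properties} and~\ref{Lem-4properties}. For the backward direction: if $(A,B,C)$ is left-$0$-invariant or right-$0$-invariant then $\IST(A,B,C)=\infty$ is exactly Lemma~\ref{lem-leftright-0-invariant}. In the third configuration — all rows of $A$ equal to $1^p0(-1)^{n-p-1}$, with $B$ and $C$ the all-$1$ and all-$(-1)$ matrices — I would show by induction on $k$ that every row of $M^k(A,B,C)$ has the form $1^a0(-1)^b$ with $a,b\geq 1$: applying the morphism to such a row replaces each leading $1$ by the all-$1$ block $B$, the single $0$ by $A$ (whose rows all have this shape), and each trailing $-1$ by the all-$(-1)$ block $C$, again producing a row of that shape. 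Since all rows of $M^k(A,B,C)$ then coincide and have the form $1^r0^s(-1)^t$, condition~(i) of Theorem~\ref{MainTheorem} holds while condition~(ii) holds vacuously, so $G_o^k(A,B,C)$ is semi-transitive for every $k$.

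For the forward direction, assume $\IST(A,B,C)=\infty$. Since $A$ is not layered, Lemma~\ref{Lem-1res} shows that no row of any $M^k(A,B,C)$ contains two $0$'s; applying this together with Corollary~\ref{CorMainTheorem=>} to $M^1(A,B,C)=A$, every row of $A$ is one of $1^n$, $(-1)^n$, $01^{n-1}$, $1^{n-1}0$, $0(-1)^{n-1}$, $(-1)^{n-1}0$, or $1^p0(-1)^{n-p-1}$ with $1\le p\le n-2$. (The all-$0$ row $0^n$ is impossible because the morphism would send it to a row with $n^2\geq 4$ zeros; and a row $1^q(-1)^{n-q}$ with $0<q<n$ is excluded by combining Lemma~\ref{Lem-no0row} and Corollary~\ref{CorMainTheorem=>} with the fact — again from these two — that $M^k(A,B,C)$ cannot contain two distinct no-zero row patterns.) I then split according to whether some row of $A$ has the form $1^p0(-1)^{n-p-1}$ with $1\le p\le n-2$.

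Suppose such a row is present. Then Corollaries~\ref{010 and 10-1} and~\ref{0-10 and 10-1} exclude $1^n$ and $(-1)^n$ from $A$, parts $(1)$--$(4)$ of Lemma~\ref{Lem-4properties} and part $(5)$ of Lemma~\ref{Lem-5properties} exclude every other non-layered row and any second exponent $p$, and the remarks above exclude $0^n$; hence $R(A)=\{1^p0(-1)^{n-p-1}\}$. Reading off row $i$ of the block $B^pAC^{n-p-1}$ appearing in $M^2(A,B,C)$ — which equals $b^p\,(1^p0(-1)^{n-p-1})\,c^{n-p-1}$ with $b,c$ the $i$-th rows of $B,C$ — and using that this row has a single $0$ and no $1$ to the right of a $-1$ (Remark~\ref{remark-no0not1(-1)}), forces $b=1^n$ and $c=(-1)^n$ for every $i$; that is, $B$ is all $1$ and $C$ is all $(-1)$, the third condition. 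Now suppose no such row is present. Then every non-layered row of $A$ lies in $\{01^{n-1},1^{n-1}0,0(-1)^{n-1},(-1)^{n-1}0\}$, and parts $(1)$--$(4)$ of Lemma~\ref{Lem-5properties} forbid each of the pairs $\{01^{n-1},1^{n-1}0\}$, $\{0(-1)^{n-1},(-1)^{n-1}0\}$, $\{01^{n-1},(-1)^{n-1}0\}$, $\{0(-1)^{n-1},1^{n-1}0\}$ from occurring together; the only surviving possibilities are that all non-layered rows of $A$ lie in $\{01^{n-1},0(-1)^{n-1}\}$, or that they all lie in $\{1^{n-1}0,(-1)^{n-1}0\}$ — i.e.\ every row of $A$ satisfies the first clause of the definition of left-$0$-invariant, or of right-$0$-invariant. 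To finish the first case I would, whenever $01^{n-1}$ is a row of $A$, examine the block $AB^{n-1}$ of $M^2(A,B,C)$ row by row: its $j$-th row is $a_jb_j^{n-1}$ with $a_j$ a row of $A$ and $b_j$ a row of $B$, and Lemma~\ref{Lem-1res}, Remark~\ref{remark-no0not1(-1)}, and Corollary~\ref{CorMainTheorem=>} (applied, if $M^2(A,B,C)$ has an all-$1$ or all-$(-1)$ row, to the reduced matrix of Lemma~\ref{Lem-no0row}) force precisely the four implications relating $b_j$ to $a_j$ in the definition of left-$0$-invariant; the symmetric block $AC^{n-1}$ obtained from a row $0(-1)^{n-1}$ of $A$ gives the implications for $C$, and the block expanding a layered row of $A$ handles any leftover rows of $B$ and $C$, forcing them into $\{1^n,(-1)^n\}$. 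The right case is entirely symmetric and yields right-$0$-invariant.

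The main obstacle is this last step — extracting the precise bookkeeping on $B$ and $C$ while simultaneously ruling out the last stray rows of $A$. The delicacy is that $M^2(A,B,C)$, and sometimes $M^3(A,B,C)$, may contain an all-$1$ or all-$(-1)$ row, so Corollary~\ref{CorMainTheorem=>} does not apply to $M^k(A,B,C)$ directly; one must pass to the reduced matrix of Lemma~\ref{Lem-no0row}, keep track of which row patterns survive the insertion of a zero column, and combine this with the ``at most one $0$ per row'' restriction of Lemma~\ref{Lem-1res}. Carrying this through uniformly over all sub-cases — in particular those in which only one of $01^{n-1}$ and $0(-1)^{n-1}$ (respectively $1^{n-1}0$ and $(-1)^{n-1}0$) appears in $A$ — is where the real effort goes.
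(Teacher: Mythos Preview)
Your plan follows the same overall architecture as the paper's proof: the backward direction via Lemma~\ref{lem-leftright-0-invariant} (plus your explicit induction for the third configuration, which the paper in fact glosses over), and the forward direction via a case split on the rows of $A$ driven by Lemmas~\ref{Lem-5properties} and~\ref{Lem-4properties}. The treatment of the case where $1^p0(-1)^{n-p-1}$ occurs in $A$, and the subsequent split into the ``left'' and ``right'' sub-cases, matches the paper closely.

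There is, however, one genuine gap that you underestimate. Your one-line exclusion of the pattern $1^q(-1)^{n-q}$ ($0<q<n$) from $R(A)$ does not go through as stated. The ``fact'' that $M^k(A,B,C)$ cannot contain two distinct no-zero row patterns is false in general (e.g.\ $1^{n^k}$ and $(-1)^{n^k}$ can coexist), and even the correct version---that two patterns $1^a(-1)^{n^k-a}$ and $1^b(-1)^{n^k-b}$ with $0<a<b<n^k$ cannot coexist---only rules out a \emph{second} value of $q$, not the first. The paper spends roughly half a page on this: applying Lemma~\ref{Lem-no0row} to the row $1^q(-1)^{n-q}$ constrains every other row of $A$ to lie in $\{1^q(-1)^{n-q},\,1^{q-1}0(-1)^{n-q},\,1^q0(-1)^{n-q-1}\}$; then part~(5) of Lemma~\ref{Lem-5properties} forbids the last two from coexisting; and finally one passes to $M^2(A,B,C)$, produces rows $1^{nq+q}0(-1)^{n^2-nq-q-1}$ (or the analogue with $q-1$) together with a no-zero row $1^{nq}(-1)^{n(n-q)}$, and applies Lemma~\ref{Lem-no0row} once more to reach a contradiction via Corollary~\ref{CorMainTheorem=>}. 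This is not mere bookkeeping---it is a second nontrivial $M^2$-level argument, comparable in weight to the ``main obstacle'' you identify at the end, and it should be flagged as such in your plan.
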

\begin{proof}
	Assume $\IST(A,B,C)=\infty$. Since $A$ is not a layered matrix, by Lemma~\ref{Lem-1res}, every row of $M^1(A,B,C)=A$ contains at most one $0$. Then, every row in $A$ is $01^{n-1}$, $1^{n-1}0$, $1^n$, $0(-1)^{n-1}$, $(-1)^{n-1}0$, $(-1)^n$,  $1^p0(-1)^{n-p-1}$ or $1^q(-1)^{n-q}$ for some $p \in \{1,2, \ldots , n-2\}$ and $q \in \{1,2, \ldots , n-1\}$. Since $(A,B,C)$ is not independent from $B$ and $C$, and $B$ and $C$ are layered matrices, we have every row of $B$ and $C$ must be $1^n$ or $(-1)^n$, otherwise there is a row in $M^k(A,B,C)$ having more than one $0$ for some $k$.
	
	If $1^q(-1)^{n-q}$ is row $i$ in $A$ for $q \in \{1,2, \ldots , n-1\}$, then $G_o^1(A,B,C)$ is isomorphic to $G(N_1)$ where $N_1$ is an $(m-1) \times (n+1)$ matrix obtained by deleting row $i$ and then adding a zero column between the $q$-th and $(q+1)$-th columns. Note that $G(N_1)$ is semi-transitive. By Corollary~\ref{CorMainTheorem=>}, every row of $N_1$ is of the form $ 0^r  1^s  0^t $ or $ 0^r  (-1)^s  0^t $ for some non-negative integers $r$, $s$, $t$. So every row of $A$ except row $i$ is $1^q(-1)^{n-q}$, $1^{q-1}0(-1)^{n-q}$ or $1^q0(-1)^{n-q-1}$. By (5) in Lemma~\ref{Lem-5properties}, we have that $A$ cannot contain both $1^{q-1}0(-1)^{n-q}$ and $1^q0(-1)^{n-q-1}$ as its rows. If $1^{q-1}0(-1)^{n-q} \notin R(A)$, then  $$A=\begin{bmatrix}
		1^q		&	a_1	&	(-1)^{n-q-1}	\\
		1^q		&	a_2	&	(-1)^{n-q-1}	\\
		\vdots	&\vdots	&	\vdots		\\
		1^q		&	a_m	&	(-1)^{n-q-1}
		\end{bmatrix}$$ 
	for $a_i \in \{0,1\}$. Since $A$ has a $0$, there is row $j$ in $A$ of the form $1^q0(-1)^{n-q-1}$. Let $b$ and $c$ be row $j$ in $B$ and $C$, respectively. Note that $B^qAC^{n-q-1}$ is $m$ consecutive rows of $M^2$ obtained by applying the morphism to $1^q0(-1)^{n-q-1}$. Then, $b^q1^q0(-1)^{n-q-1}c^{n-q-1}$ is row $j$ in $M^2(A,B,C)$ and it must be of the form $1^r0^s(-1)^t$ for some $r,s,t\geq 0$. So, we obtain $b=1^n$ and $c^n=(-1)^n$ and $1^{nq+q}0(-1)^{n^2-nq-q-1}$ is a row of $M^2(A,B,C)$. Note that $B^qC^{n-q}$ is $m$ consecutive rows of $M^2$ obtained by applying the morphism to $1^q(-1)^{n-q}$, and $1^{nq}(-1)^{n(n-q)}$ is row $j$ in $B^qC^{n-q}$. Then, $G_o^2(A,B,C)$ is isomorphic to $G(N_2)$ where $N_2$ is a matrix obtained by deleting the row $1^{nq}(-1)^{n(n-q)}$ and then adding a zero column between the $(nq)$-th and $(nq+1)$-th columns. So, $1^{nq}01^{q}0(-1)^{n^2-nq-q-1}$ is a row in $N_2$. Therefore, by Corollary~\ref{CorMainTheorem=>}, we have $N_2$ is not semi-transitive which contradicts to semi-transitivity of $G_o^2(A,B,C)$. By the same argument, we also obtain a contradiction in the case of $1^{q-1}0(-1)^{n-q} \notin R(A)$. Hence $1^q(-1)^{n-q}$ cannot be a row in $A$.

	Suppose that $1^p0(-1)^{n-p-1}$ is row $i$ in $A$.
	By Corollaries~\ref{010 and 10-1} and \ref{0-10 and 10-1}, we have that $1^{n}$ and $(-1)^{n}$ are not rows in $M^1(A,B,C)$. By Lemma~\ref{Lem-4properties}, we have that $01^{n-1}$, $1^{n-1}0$, $0(-1)^{n-1}$ and $(-1)^{n-1}0$ are not rows in $M^1(A,B,C)$. If there is a row in $A$ of the form $1^{u}0(-1)^{n-u-1}$, where $1 \leq u \leq n-2$, by (5) in Lemma~\ref{Lem-5properties}, we have $p=u$. Hence, we obtain $$A= \begin{bmatrix}
	1^p		&	0	&	(-1)^{n-p-1}	\\
	1^p		&	0	&	(-1)^{n-p-1}	\\
	\vdots	&\vdots	&	\vdots		\\
	1^p		&	0	&	(-1)^{n-p-1}
	\end{bmatrix} \text{~~where~~} 1 \leq p < n-2.$$
	Let $b$ and $c$ be row $j$ in $B$ and $C$, respectively, for any $1 \leq j \leq m$. Note that $B^pAC^{n-p-1}$ is $m$ consecutive rows of $M^2$ obtained by applying the morphism to $1^p0(-1)^{n-p-1}$. Then, $b^p1^p0(-1)^{n-p-1}c^{n-p-1}$ is row $j$ in $M^2(A,B,C)$ and it must be of the form $1^r0^s(-1)^t$ for some $r,s,t\geq 0$. So, we obtain $b=1^n$ and $c=(-1)^n$. Hence, we see that $B$ and $C$ are all $1$ matrix and all $(-1)$ matrix, respectively.
	
	Assume that $1^p0(-1)^{n-p-1}$ is not a row in $A$ for any $1 \leq p \leq n-2$. That is, every row in $A$ is $01^{n-1}$, $1^{n-1}0$, $1^n$, $0(-1)^{n-1}$, $(-1)^{n-1}0$ or $(-1)^n$. By Lemma~\ref{Lem-5properties}, we need to consider the following two cases. \\

\noindent
${\bf Case 1}$: $01^{n-1},0(-1)^{n-1} \in R(A)$ and $1^{n-1}0,(-1)^{n-1}0 \notin R(A)$. That is, every row in $A$ is $01^{n-1}$, $1^n$, $0(-1)^{n-1}$ or $(-1)^n$. Suppose that $01^{n-1}$ is a row in $A$. Then, $AB^{n-1}$ is $m$ consecutive rows in $M^2(A,B,C)$. Let row $i$ in $B$ be $b$. Consider the following subcases: 
		\begin{itemize}
			\item If row $i$ in $A$ is $01^{n-1}$, then $01^{n-1}b^{n-1}$ is a row in $M^2(A,B,C)$. Since $b \neq 0^n$, we have $b=1^n$.
			\item If row $i$ in $A$ is $1^{n}$, then $1^{n}b^{n-1}$ is a row in $M^2(A,B,C)$. Since $b \neq 0^n$, we have $b=1^n$.
			\item If row $i$ in $A$ is $0(-1)^{n-1}$, then $0(-1)^{n-1}b^{n-1}$ is a row in $M^2(A,B,C)$. Since $b \neq 0^n$, we have $b=(-1)^n$.
			\item If row $i$ in $A$ is $(-1)^{n}$, then $(-1)^{n}b^{n-1}$ is a row in $M^2(A,B,C)$. Since $b \neq 0^n$, we have $b=(-1)^n$.
		\end{itemize}
		 Suppose that $0(-1)^{n-1}$ is a row in $A$. Then, $AC^{n-1}$ is $m$ consecutive rows in $M^2(A,B,C)$. Let row $i$ in $C$ be $c$. Consider the following subcases: 
		\begin{itemize}
		\item If row $i$ in $A$ is $01^{n-1}$, then $0(-1)^{n-1}c^{n-1}$ is a row in $M^2(A,B,C)$. Since $c \neq 0^n$, we have $c=1^n$.
		\item If row $i$ in $A$ is $1^{n}$, then $1^{n}c^{n-1}$ is a row in $M^2(A,B,C)$. Since $c \neq 0^n$, we have $c=1^n$.
		\item If row $i$ in $A$ is $0(-1)^{n-1}$, then $0(-1)^{n-1}c^{n-1}$ is a row in $M^2(A,B,C)$. Since $c \neq 0^n$, we have $c=(-1)^n$.
		\item If row $i$ in $A$ is $(-1)^{n}$, then $(-1)^{n}c^{n-1}$ is a row in $M^2(A,B,C)$. Since $c \neq 0^n$, we have $c=(-1)^n$.
	\end{itemize}
	Thus, we see that $(A,B,C)$ is left-$0$-invariant. \\

\noindent
${\bf Case 2}$: $1^{n-1}0,(-1)^{n-1}0 \in R(A)$ and $01^{n-1},0(-1)^{n-1} \notin R(A)$. With the same way of the case 1, we can prove that $(A,B,C)$ is right-$0$-invariant. \\

	Thus, ``$\Rightarrow$'' has been proved. Lemma~\ref{lem-leftright-0-invariant} gives us the converse. 
\end{proof}

\section{Direction of further research}

In this paper, we fully classified semi-transitivity of infinite families of directed split graphs generated by iterations of morphisms in the cases when the matrix $A$ has a 0. This research is a first step towards a classification of semi-transitive directed graphs in terms of positions of $0$s and $1$s (and $(-1)$s in the lower-triangular case) in the adjacency matrices. An application of such a classification could be in finding more efficient algorithms to recognize semi-transitivity of a directed graph, which is a problem solvable in polynomial time~\cite{KL15}. More importantly, a classification of semi-transitive directed graphs via adjacency matrices may lead to a better understanding of which (undirected) graphs admit semi-transitive orientations; this is an NP-complete problem~\cite{K17,KL15}.  Should the general problem resist attempts to solve it, one could shift their attention to classification of semi-transitivity of naturally defined (infinite) families of directed graphs. Such a shift should allow discovering new methods to deal with semi-transitivity of oriented graphs, and hence bring us closer to solving the general problem. 

For yet another direction of research, note that Definition~\ref{IWR-def} of the index of semi-transitivity $\IST(A,B,C)$ makes sense in many situations when $A$ has no $0$'s. For example, if $A$, $B$ and $C$ contain only $1$'s, we still can apply Definition~\ref{IWR-def} to see that $\IST(A,B,C)=\infty$. On the other hand, Definition~\ref{IWR-def} does not work, for example, in the case when $A$ is any matrix without $0$'s while $B$ and $C$ contain only $0$'s, as the infinite graph $G_o(A,B,C)$ is then not well-defined. Indeed, in the later case we see that  $G_o^i(A,B,C)$ is not an induced subgraph of $G_o^{i+1}(A,B,C)$ while $G_o^i(A,B,C)$ is an induced subgraph of $G_o^{i+2}(A,B,C)$ for any $i\geq 0$, so that we have two infinite chains of induced subgraphs leading to two different infinite graphs as the limits (one of which is with no edges between the clique and the independent set). For another example, letting $A$ be an all one matrix, $B$ be an all $(-1)$ matrix, and $C$ be an all zero matrix, we witness the situation of three infinite chains of induced subgraphs with three infinite graphs as the limits. 

In any case, the problem we solved in this paper can be extended to the case of  matrices $A$ with no $0$'s in the situations when the limiting infinite graph is uniquely defined, and the goal then is to classify such triples $(A,B,C)$ with $\IST(A,B,C)=\infty$. Of course, extra care should be taken about Definition~\ref{IWR-def} as it still may not work. For example, $A$ without $0$'s can easily be chosen so that $G_0^1(A,B,C)$ has directed cycles and thus is not semi-transitive, while then choosing $B$ and $C$ be all one matrices, we see that $G_0^k(A,B,C)$ is semi-transitive for $k>1$, so that the limiting graph is also semi-transitive and it is natural to assume that  $\IST(A,B,C)=\infty$, while by Definition~\ref{IWR-def}, $\IST(A,B,C)=1$. However, natural adjustments to Definition~\ref{IWR-def} could be introduced. For example, we can define  $\IST(A,B,C) := \infty$ if there exists a  natural number $k$ such that $G^i_o(A,B,C)$ is semi-transitive for every $i \geq k$.


\begin{thebibliography}{20}
%\addcontentsline{toc}{chapter}{Bibliography}
\bibitem{CKS19} H. Z.Q. Chen, S. Kitaev, A. Saito. Representing split graphs by words, {\em Discussiones Mathematicae Graph Theory}, to appear 2021.
\bibitem{CJKS2020} P. Choudhary,  P. Jain, R. Krithika, V. Sahlot. Vertex deletion on split graphs: beyond 4-hitting set. {\em Theoret. Comput. Sci.} {\bf 845} (2020), 21--37. 
%\bibitem{WRofToeplitz} G.-S. Cheon, M. Kim, J. Kim, S. Kitaev. Word-representable of Toeplitz graphs. {\em Disc. Appl. Math.} {\bf 270} (2019) 96--105.
\bibitem{FH77} S. Foldes, P. L. Hammer. Split graphs. In Proceedings of the {\em Eighth Southeastern Conference on Combinatorics, Graph Theory and Computing} (Louisiana State Univ., Baton Rouge, 1977), pages 311--315. Congressus Numerantium, No. XIX.
\bibitem{HKP11} M. Halld\'orsson, S. Kitaev, A. Pyatkin. Alternation graphs, Lecture Notes in Computer Science {\bf 6986} (2011) 191--202. Proceedings of the 37th International Workshop on Graph-Theoretic Concepts in Computer Science, WG 2011, Tepla Monastery, Czech Republic, June 21--24, 2011.
\bibitem{HKP16} M. Halld\'orsson, S. Kitaev, A. Pyatkin. Semi-transitive orientations and word-representable graphs. {\em Discr. Appl. Math.} {\bf 201} (2016) 164--171.
\bibitem{I2021} K. Iamthong. Word-representability of split graphs generated by morphisms. arXiv:2104.14872, 2021.
\bibitem{K17} S. Kitaev. A Comprehensive Introduction to the Theory of Word-Representable Graphs, In: Developments in Language Theory: 21st International Conference, DLT, Liege, Aug 7--11, 2017. {\em Lecture Notes in Comp. Sci.} {\bf 10396} (2017) 36--67. 
\bibitem{KL15} S. Kitaev, V. Lozin. Words and Graphs, {\em Springer}, 2015.
\bibitem{KLMW17} S. Kitaev, Y. Long, J. Ma, H. Wu. Word-representability of split graphs, {\em Journal of Combinatorics}, to appear.
\bibitem{Loth} M. Lothaire. {\em Combinatorics on words}, Encyclopedia of Mathematics and its Applications, 17, Addison-Wesley Publishing Co., Reading, Mass, 1983.
\end{thebibliography}
\end{document}